\definecolor{labelkey}{rgb}{0,0,1}
\numberwithin{equation}{section}
\newtheorem{theorem}{Theorem}
\newtheorem{proposition}[theorem]{Proposition}
\newtheorem{lemma}[theorem]{Lemma}
\theoremstyle{definition}
\theoremstyle{remark}
\newtheorem{remark}[theorem]{Remark}
\newcommand\N{{\mathbb N}}
\newcommand\R{{\mathbb R}}
\newcommand\Q{{\mathbb Q}}
\newcommand{\ds}{\displaystyle}
\DeclareMathOperator{\supp}{supp}
\DeclareMathOperator{\osc}{osc}
\newcommand{\cQ}{\mathcal Q}
\newcommand{\cR}{\mathcal R}
\def\eps{{\varepsilon}}
\def\un{\mathbf{1}}
\def\Qel{Q^{\mathit{el}}}
\def\Qint{Q_{\mathrm{int}}}
\def\Q12{Q_{\mathrm{mid}}}
\def\Qext{Q_{\mathrm{ext}}}
\newcommand{\dd}{{\, \mathrm d}}
\newcommand{\n}[1]{{\left\| #1 \right\|}}
\def\signfg{\bigskip \begin{center} {\sc Fran\c{c}ois Golse \par\vspace{3mm}
      \'Ecole polytechnique, Centre de math\'ematiques Laurent Schwartz  \par
      91128 Palaiseau Cedex, France
\par\vspace{3mm} e-mail:} \tt{francois.golse@polytechnique.edu} \end{center}}
\def\signci{\bigskip \begin{center} {\sc Cyril Imbert\par\vspace{3mm}
      CNRS \& \'ENS, D\'epartement de math\'ematiques et applications \par
      UMR 8553, \'Ecole normale sup\'erieure (Paris)\par
 45 rue d'Ulm, F-75230 Paris cedex 5, 
      France\par\vspace{3mm} e-mail:}
    \tt{Cyril.Imbert@ens.fr} \end{center}}
\def\signcm{\bigskip \begin{center} {\sc Cl\'ement
      Mouhot\par\vspace{3mm}
      University of Cambridge\par
      DPMMS, Centre for Mathematical Sciences\par
      Wilberforce road, Cambridge CB3 0WA, UK
      \par\vspace{3mm} e-mail:}
    \tt{C.Mouhot@dpmms.cam.ac.uk} \end{center}}
\def\signav{\bigskip \begin{center} {\sc Alexis F. Vasseur\par\vspace{3mm}
      Department of Mathematics, University of Texas at Austin \par
      1 University Station - C1200, Austin,
      Texas, TX 78712-0257, USA\par\vspace{3mm} e-mail:}
    \tt{vasseur@math.utexas.edu} \end{center}}
\begin{document}

\title[Harnack inequality for kinetic Fokker-Planck equations]{Harnack
  inequality for kinetic Fokker-Planck equations with rough
  coefficients and application to the Landau equation}

\author{F. Golse, C. Imbert, C. Mouhot and A. F. Vasseur} \thanks{The
  authors would like to thank Luis Silvestre for fruitful comments
  during the preparation of this article. The work of A. F. Vasseur
  was partially supported by the NSF Grant DMS 1209420, and by a
  visiting professorship at \'Ecole polytechnique. The work of
  C. Mouhot was partially supported by the ERC Grant MATKIT, and by a
  visiting professorship at Universit\'e Paris-Est Cr\'eteil.}

\date{\today}

\keywords{Hypoelliptic equations, kinetic theory, Fokker-Planck
  equation, Landau equation, ultraparabolic equations, Kolmogorov
  equation, H\"older continuity, De~Giorgi method, Moser iteration,
  averaging lemma} \subjclass{35H10, 35B65}

\begin{abstract}
  We extend the De~Giorgi--Nash--Moser theory to a class of kinetic
  Fokker-Planck equations and deduce new results on the Landau-Coulomb
  equation. More precisely, we first study the H\"older regularity and
  establish a Harnack inequality for solutions to a general linear
  equation of Fokker-Planck type whose coefficients are merely
  measurable and essentially bounded, i.e.  assuming no regularity on
  the coefficients in order to later derive results for non-linear
  problems.  This general equation has the formal structure of the
  hypoelliptic equations ``of type II'', sometimes also called
  ultraparabolic equations of Kolmogorov type, but with rough
  coefficients: it combines a first-order skew-symmetric operator with
  a second-order elliptic operator involving derivatives along only
  part of the coordinates and with rough coefficients. These general
  results are then applied to the non-negative essentially bounded
  weak solutions of the Landau equation with inverse-power law
  $\gamma \in [-d,1]$ whose mass, energy and entropy density are
  bounded and mass is bounded away from $0$, and we deduce the
  H\"older regularity of these solutions.
\end{abstract}

\maketitle

\tableofcontents

\section{Introduction}
\label{sec:introduction}

\subsection{The Landau equation}

We consider the Landau equation 
\begin{equation}
\label{eq:landau0}
\partial_t f + v \cdot \nabla_x f = \nabla_v \cdot (A[f] \nabla_v f + B[f] f)
\end{equation}
where 
\begin{equation*}
\begin{cases}
\ds A[f] (v) = a_{d,\gamma} \int_{\R^d} \left( I - \frac{w}{|w|}\otimes \frac{w}{|w|}\right) |w|^{\gamma+2} f(v-w) \dd w,\\[3mm]
\ds B[f](v) = b_{d,\gamma} \int_{\R^d} |w|^{\gamma}w f(v-w) \dd w
\end{cases}
\end{equation*}
with $\gamma \in [-d,0]$ and $a_{d,\gamma}>0$. We note that the main
physical case is that of Coulomb interactions when $\gamma=-d$ and
$d=3$ (giving rise to the \emph{Landau-Coulomb equation} in plasma
physics); the other cases are \emph{hard potentials}
$\gamma \in (0,1]$ (not covered here\footnote{Our method would apply
  as well in this case with no changes, we did not include it only
  because it requires additional the condition
  $\sup_x \int_v f(t,x,v)|v|^{2+\gamma} \dd v < \infty$ on the
  solution and we wanted a clean statement.}), \emph{Maxwellian
  molecules} $\gamma=0$, and \emph{soft potentials}
$\gamma \in [-d,0)$. It can be rewritten as follows
\begin{equation}
\label{eq:landau}
\partial_t f + v \cdot \nabla_x f = \nabla_v \cdot \left(A[f] \nabla_v f \right) + B[f] \nabla_v f + c[f] f
\end{equation}
where 
\[ 
c[f] (v) = 
\begin{cases} 
  \ds c_{d,\gamma} \int_{\R^d} |w|^\gamma f(v-w) \dd w & \text{ if } \gamma > -d , \\[2mm]
  \ds c_{d,\gamma} f & \text{ if } \gamma = -d .
\end{cases}
\]
We assume that the mass, energy and entropy density of the weak
solution $f$ satisfy the following control at a given space-time point
$(x,t)$: 
\begin{equation}
\label{e:meh}
\mathfrak{C}(x,t) \quad 
\left\{
\begin{aligned}
M_1 \le M (x,t) &= \int_{\R^d} f(x,v,t) \dd v \le M_0 &&  \text{
  (local mass), } \\
E(x,t) &= \frac12 \int_{\R^d} f(x,v,t) |v|^2 \dd v \le E_0 && \text{
  (local energy), } \\
H(x,t) &= \int_{\R^d} f(x,v,t) \ln f (x,v,t) \dd v \le H_0 &&  \text{
  (local entropy). }
\end{aligned}
\right.
\end{equation}

The \emph{weak solutions} to equation~\eqref{eq:landau0} on
$U_x \times U_v \times I$, $U_x \subset \R^d$ open, $U_v \subset \R^d$
open, $I = [a,b]$ with $-\infty<a<b \le +\infty$, are defined as
functions
$f \in L^\infty_t(I,L^2_{x,v}(U_x \times U_v))) \cap L^2_{x,t}(U_x
\times I, H^1_v(U_v))$ such that
$\partial_t f + v\cdot \nabla_x f \in L^2_{x,t}(U_x \times I,
H^{-1}_v(U_v))$, $f$ satisfies estimates~\eqref{e:meh} and satisfies
the equation in the sense of distributions\footnote{Observe that the
  coefficients $A[f]$ and $B[f]$ are controlled under
  assumption~\eqref{e:meh}, thanks to Lemmas~\ref{lem:upperbound}
  and~\ref{lem:lowerbound}.}.

\begin{theorem}[H\"older continuity for the Landau equation]\label{thm:holder-landau}
  Assume $\gamma \in [-d,0]$.  Let $f$ be an essentially bounded weak
  solution of \eqref{eq:landau} in $B_1 \times B_1 \times (-1,0]$.
  Assume that $\mathfrak C(x,t)$ (equation~\eqref{e:meh}) holds true
  for all $B_1 \times (-1,0]$.  Then $f$ is $\alpha$-H\"older
  continuous with respect to
  $(x,v,t)\in B_{\frac12}\times B_{\frac12}\times (-\frac12,0]$ and
  \[ \| f \|_{C^\alpha\left(B_{1/2} \times B_{1/2} \times
    (-1/2,0]\right)} \le C \left(\|f \|_{L^2(B_1 \times B_1 \times
      (-1,0])} + \|f\|^{1+\frac{|\gamma|}d}_{L^\infty(B_1 \times B_1
      \times (-1,0])} \right)\]
  for some $\alpha$ and $C$ depending on dimension, $M_1$, $M_0$,
  $E_0$ and $H_0$.
\end{theorem}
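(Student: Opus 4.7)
The strategy is to reduce the problem to the linear theory of kinetic Fokker-Planck equations with rough coefficients developed earlier in the paper. Given the essentially bounded weak solution $f$, freeze it inside the nonlinear coefficients, setting $\bar A(x,v,t) := A[f](x,v,t)$, $\bar B(x,v,t) := B[f](x,v,t)$, and $\bar c(x,v,t) := c[f](x,v,t)$. Then $f$ solves the linear equation
\begin{equation*}
\partial_t f + v\cdot \nabla_x f = \nabla_v \cdot \bigl(\bar A\, \nabla_v f\bigr) + \bar B \cdot \nabla_v f + \bar c\, f,
\end{equation*}
which has precisely the structure of the general ultraparabolic equations of Kolmogorov type with rough coefficients covered by the main linear Hölder regularity theorem of the paper.

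The core of the argument is then to verify that the frozen coefficients $(\bar A,\bar B,\bar c)$ satisfy the structural assumptions required by the linear theory: an ellipticity bound from below on $\bar A$ and $L^\infty$-type upper bounds on $|\bar A|$, $|\bar B|$, $|\bar c|$, all uniformly on $B_1 \times B_1 \times (-1,0]$ with constants depending only on dimension, $M_1$, $M_0$, $E_0$ and $H_0$ (and on $\|f\|_{L^\infty}$ through the power $1+|\gamma|/d$ when $\gamma<0$). For the upper bounds, one invokes Lemma~\ref{lem:upperbound}: for $\gamma\ge -d$, the convolution kernels $|w|^{\gamma}$, $|w|^{\gamma+1}$, $|w|^{\gamma+2}$ are split into a near-singularity part (controlled by $\|f\|_{L^\infty}$) and a tail part (controlled by the local mass and energy), yielding the exponent $1+|\gamma|/d$ by optimizing the cut-off radius $R \sim \|f\|_{L^\infty}^{-1/d}$. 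For the ellipticity, Lemma~\ref{lem:lowerbound} provides $\xi^T \bar A(x,v,t) \xi \ge \lambda |\xi|^2$ for some $\lambda>0$ depending only on $M_1$, $M_0$, $E_0$, $H_0$: the integrand $(I - \hat w\otimes \hat w)|w|^{\gamma+2}$ is non-negative as a matrix and provides ellipticity transverse to $w$, and the entropy bound prevents the mass of $f$ from concentrating on a single line, which combined with the lower bound on mass yields uniform non-degeneracy of the matrix.

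Once these bounds are in place, the linear Hölder regularity theorem applies directly and produces the desired $C^\alpha$ estimate on $B_{1/2}\times B_{1/2}\times(-1/2,0]$, with the $L^2$-norm of $f$ on the enlarged cylinder controlling the oscillation and the $L^\infty$-norm entering through the coefficient bounds, producing exactly the combination $\|f\|_{L^2} + \|f\|_{L^\infty}^{1+|\gamma|/d}$ appearing in the statement.

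The main obstacle — once the bulk of the paper (the linear Hölder/Harnack theory for rough coefficients) is accepted — is the ellipticity lemma in the regime $\gamma \in [-d,0]$ and especially the Coulomb case $\gamma=-d$: one must extract a quantitative lower bound on $\bar A$ from only the integral controls $M_1 \le M \le M_0$, $E \le E_0$, $H \le H_0$, using the entropy bound in an essential way to rule out concentration of $f$ on hyperplanes. The rest of the argument is a clean reduction to the linear theory, since no regularity of the coefficients is required, only their pointwise bounds.
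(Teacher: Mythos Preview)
Your approach is correct and is exactly the paper's: the reduction to the linear Theorem~\ref{thm:holder} via the appendix lemmas on $A[f]$, $B[f]$, $c[f]$, with source term $s=c[f]\,f$ producing the $\|f\|_{L^\infty}^{1+|\gamma|/d}$ contribution. Note only that you have the two appendix lemma references swapped --- the paper's own labelling is inverted relative to the content: Lemma~\ref{lem:upperbound} is titled ``Lower bound'' and supplies the ellipticity of $A[f]$, while Lemma~\ref{lem:lowerbound} is titled ``Upper bounds'' and controls $|A[f]|$, $|B[f]|$, $|c[f]|$.
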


\begin{remark}
  After this work was completed, we heard from a nice recent preprint
  of Cameron, Silvestre and Snelson \cite{CSS} that establishes
  \emph{a priori} upper bounds for solutions to the spatially
  inhomogeneous Landau equation in the case of moderately soft
  potentials ($\gamma \in [-2,0]$), with arbitrary initial data, under
  the assumption~\eqref{e:meh}. When $\gamma \in [-2,0]$, it thus
  allows us to remove the $L^\infty$ assumption on the weak solution
  in Theorem~\ref{thm:holder-landau}.
\end{remark}
Under the assumptions of Theorems~\ref{thm:holder-landau}, it is known
\cite{dv,luislandau} that the diffusion matrix $A[f]$ is uniformly
elliptic and $B[f]$ and $c[f]$ are essentially bounded for bounded
velocities (see Lemmas~\ref{lem:upperbound} and \ref{lem:lowerbound}
in Appendix).  In particular, the assumption~\eqref{eq:ellipticity}
given below, and under which our main results
(Theorems~\ref{thm:holder} and \ref{thm:harnack}) hold true, is
satisfied.

\subsection{The question studied and its history}

We are also motivated by the study of the following nonlinear kinetic
Fokker-Planck equation
\begin{equation}
  \label{eq:1}
  \partial_t f + v \cdot \nabla_x f = \rho [f] \, \nabla_v \cdot \left( \nabla_v
    f + v f \right), \quad t \ge 0, \ x \in \R^d, \ v  \in \R^d, 
\end{equation}
(with or without periodicity conditions with respect to the space
variable) where $d \in \N^*$, $f = f(x,v,t) \ge 0$ and $\rho[f] :=
\int_{\R^d} f(x,v,t) \dd v$. The construction of global smooth
solutions for such a problem is one motivation of the present paper.

The linear kinetic Fokker-Planck equation
$\partial_t f + v \cdot \nabla_x f = \nabla_v \cdot \left( \nabla_v f
  + v f \right)$
is sometimes called the Kolmogorov-Fokker-Planck equation, as it was
studied by Kolmogorov in the seminal paper \cite{kolm}. In this note,
Kolmogorov explicitely calculated the fundamental solution and deduced
regularisation in both variables $x$ and $v$, even though the operator
$\nabla_v \cdot (\nabla_v + v) - v \cdot \nabla_x$ shows ellipticity
in the $v$ variable only. It inspired H\"ormander and his theory of
hypoellipticity \cite{hormander}, where the regularisation is
recovered by more robust and more geometric commutator estimates (see
also \cite{rs}).

Another question which has attracted a lot of attention in calculus of
variations and partial differential equations along the 20th century
is Hilbert's 19th problem about the analytic regularity of solutions
to certain integral variational problems, when the quasilinear
Euler-Lagrange equations satisfy ellipticity conditions. Several
previous results had established the analyticity conditionally to some
differentiability properties of the solution, but the full answer came
with the landmark works of De~Giorgi \cite{DeG56,DeG} and
Nash \cite{nash}, where they proved that  any solution to these
variational problems with square integrable derivative is
analytic. More precisely their key contribution is the
following\footnote{We give the parabolic version due to Nash here.}:
reformulate the quasilinear parabolic problem as 
\begin{equation}
  \label{eq:2}
  \partial_t f = \nabla_v \left( A(v,t) \nabla_v f \right), \quad t \ge 0,
  \ v \in \R^d
\end{equation}
with $f = f(v,t) \ge 0$ and $A = A(v,t)$ satisfies the ellipticity
condition $0 < \lambda I \le A \le \Lambda I$ for two constants
$\lambda,\Lambda >0$ but is, besides that, merely measurable. Then the
solution $f$ is H\"older continuous.  \medskip

The method has been extended to degenerate cases, like the
$p$-Laplacian, first in the elliptic case by Ladyzhenskaya and
Uralt'seva \cite{L1}, and then, degenerate parabolic cases were
covered by DiBenedetto \cite{Di1} (see also DiBenedetto,
Gianazza and Vespri \cite{L2,L3,L4}). More recently, the method has
been extended to integral operators, such as fractional diffusion, in
\cite{CV1, CV2} --- see also the work of Kassmann \cite{K} and of
Kassmann and Felsinger \cite{K1}. Further application to fluid
mechanics can be found in \cite{V2,V3,V4}.

\subsection{Main results}

In view of the Landau equation and the nonlinear (quasilinear)
equation~\eqref{eq:1}, it is natural to ask whether a similar result as
the one of De~Giorgi-Nash holds for hypoelliptic equations. More
precisely, we consider the following kinetic Fokker-Planck equation
\begin{equation}
  \label{eq:main}
  \partial_t f + v \cdot \nabla_x f = \nabla_v \cdot \left(A \nabla_v
    f  \right) + B \cdot \nabla_v f + s, \quad t \in (0,T),\  (x,v) \in \Omega, 
\end{equation}
where $\Omega$ is an open set of $\R^{2d}$, $f = f(x,v,t)$, $B$ and
$s$ are bounded measurable coefficients depending in $(x,v,t)$, and
the $d \times d$ real matrices $A$, $B$ and source term $s$ are
measurable and satisfy
\begin{equation}\label{eq:ellipticity}
\begin{cases} 
& 0 < \lambda I \le A \le  \Lambda I \\
& |B| \le  \Lambda \\
& s \text{ essentially bounded}
\end{cases}
\end{equation} 
for two constants $\lambda,\Lambda$. We establish the H\"older
continuity of solutions to this problem. To state the result, we have
to define cylinders that respect two invariant transformations of the
(class of) equation(s): the scaling $(x,v,t) \mapsto (r^3x,rv,r^2t)$
and the transformation
\begin{equation}\label{eq:transformation}
  \mathcal{T}_{z_0} : z \mapsto (x_0+ x + t v_0,v_0+v, t_0+t).
\end{equation}
Given $z_0= (x_0,v_0,t_0) \in \R^{2d+1}$, the cylinder $Q_r(z_0)$
``centered'' at $z_0$ of ``radius'' $r$ is defined as
\begin{equation}
\label{def:slanted}
Q_r (z_0) = \left\{ (x,v,t): |x-x_0 - (t-t_0)v_0| < r^3, |v-v_0|< r, 
t \in \left(t_0-r^2,t_0\right]\right\}.
\end{equation}
When $z_0=0$, we shall omit to specify the base point:
$Q_r := Q_r(0,0,0)$.
 \medskip

 The \emph{weak solutions} to equation~\eqref{eq:main} on
 $U_x \times U_v \times I$, $U_x \subset \R^d$ open,
 $U_v \subset \R^d$ open, $I = [a,b]$ with $-\infty<a<b \le +\infty$,
 are defined as functions
 $f \in L^\infty_t(I,L^2_{x,v}(U_x \times U_v))) \cap L^2_{x,t}(U_x
 \times I, H^1_v(U_v))$ such that
 $\partial_t f + v\cdot \nabla_x f \in L^2_{x,t}(U_x \times I,
 H^{-1}_v(U_v))$ and $f$ satisfies the equation~\eqref{eq:main} in the
 sense of distributions.

\begin{theorem}[H\"older continuity]\label{thm:holder}
  Let $f$ be a weak solution of \eqref{eq:main} in $\Qext := Q_{r_0}(z_0)$ and
  $\Qint := Q_{r_1}(z_0)$ with $r_1 < r_0$. Then $f$ is $\alpha$-H\"older
  continuous with respect to $(x,v,t)$ in $\Qint$ and
  \[ \| f \|_{C^\alpha(\Qint)} \le C \left(\|f \|_{L^2(\Qext)} +
  \|s\|_{L^\infty(\Qext)}\right)\]
for some $\alpha$ universal
    (i.e. $\alpha=\alpha(d,\lambda,\Lambda)$) and $C =
    C(d,\lambda,\Lambda,\Qext,\Qint)$.
\end{theorem}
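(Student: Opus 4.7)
The strategy is to adapt the De~Giorgi--Nash--Moser program to the hypoelliptic setting, carefully respecting the two invariances of the equation: the anisotropic scaling $(x,v,t)\mapsto(r^3x,rv,r^2t)$ and the Galilean transformation $\mathcal T_{z_0}$, which together make the slanted cylinders $Q_r(z_0)$ of \eqref{def:slanted} the right geometric objects. After translation, dilation and velocity shift, it is enough to prove a decay of oscillation between $Q_{1/2}$ and $Q_1$ for sub- and super-solutions normalised so that $|f|\le 1$ and $\|s\|_\infty$ is small. \textbf{Step 1 (local boundedness).} Test the equation against $(f-k)_+\chi^2$ with $\chi$ a smooth cutoff adapted to nested slanted cylinders, and use the ellipticity $A\ge\lambda I$ to obtain a Caccioppoli-type energy inequality for $\int|\nabla_v(f-k)_+|^2\chi^2$ and $\sup_t\int(f-k)_+^2\chi^2$. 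The gain of integrability that in the purely parabolic case comes from Sobolev embedding in $v$ must here combine the $v$-regularity of $(f-k)_+$ with an $x$-regularity produced by the transport term, via either a velocity averaging lemma or a direct gain-of-integrability estimate for functions in $L^2_{t,x}H^1_v$ whose kinetic derivative lies in $L^2_{t,x}H^{-1}_v$. Such a gain $L^2\hookrightarrow L^{2+\delta}$ on compatible cylinders is enough to run the standard De~Giorgi iteration on $k_n=K(1-2^{-n})$ and produce the desired $L^\infty$ bound.

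\textbf{Step 2 (reduction of oscillation).} Assuming $f\le 1$ on $Q_1$, I would prove two De~Giorgi-type lemmas. The first says that if $|\{f\ge 1/2\}\cap Q_1|$ is smaller than a universal constant, then $f\le 1/2$ on $Q_{1/2}$; this is a direct consequence of Step~1 applied to $(f-1/2)_+$. The second, which is the heart of the matter, is an intermediate-value (isoperimetric) lemma: a sub-solution with $f\le 1$ on $Q_1$ that also satisfies $|\{f\le 0\}\cap Q_1|\ge\mu>0$ must have $|\{0<f<1/2\}\cap Q_1|\ge\nu(\mu)>0$, or else $f$ has already dropped below $1/2$ on a smaller slanted cylinder. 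In the parabolic setting this is an isoperimetric inequality on each time slice; here, because the diffusion acts only in $v$, the measure information must be transported from slice to slice via the skew-symmetric streaming operator $v\cdot\nabla_x$, and the slant of the cylinders is exactly what makes this transport compatible with the geometry. I would couple the energy inequality on truncated level sets with a velocity-averaging style argument to extract a quantitative lower bound on the measure of the intermediate set.

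\textbf{Step 3 (iteration) and main obstacle.} Combining the two lemmas of Step~2 applied to $f$ and to $-f$ in the classical way yields a reduction of oscillation of the form $\osc_{Q_{\kappa}(z_0)}f\le(1-\theta)\osc_{Q_1(z_0)}f$ for universal $\kappa,\theta\in(0,1)$. Applied at every base point $z_0\in\Qint$ and at every dyadic scale compatible with the kinetic scaling, a standard Campanato-type argument then yields the $C^\alpha$ estimate with $\alpha=-\log_\kappa(1-\theta)$ and the announced dependence on $\|f\|_{L^2(\Qext)}$ and $\|s\|_{L^\infty(\Qext)}$. The main obstacle is Step~2, and specifically the intermediate-value lemma: unlike in the parabolic case, there is no direct isoperimetric inequality on time slices, and one has to quantitatively convert the hypoelliptic regularisation into a lower bound on an intermediate level set, which is also what forces the particular geometry of the slanted cylinders. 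Step~1 is the other technically delicate input, because the gain of integrability must respect the anisotropic scaling, but once that gain is in hand the iteration itself is routine.
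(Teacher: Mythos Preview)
Your architecture is exactly that of the paper: local boundedness via a De~Giorgi iteration driven by a kinetic gain of integrability, then a measure-to-pointwise/intermediate-value lemma, then oscillation decay and iteration along the kinetic scaling. Two points, however, separate your sketch from the actual proof.

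\textbf{Step 1: the defect measure.} You plan to apply a gain-of-integrability statement to $(f-k)_+$ on the grounds that it lies in $L^2_{t,x}H^1_v$ with kinetic derivative in $L^2_{t,x}H^{-1}_v$. But $(f-k)_+$ is only a \emph{sub}-solution: one has $(\partial_t+v\cdot\nabla_x)(f-k)_+ = \nabla_v\cdot(A\nabla_v(f-k)_+)+\ldots-\mu$ with a nonnegative defect measure $\mu$ that is \emph{not} in $L^2_{t,x}H^{-1}_v$. A direct use of Bouchut-type averaging lemmas therefore fails. The paper resolves this by a comparison principle: localise, then solve the \emph{equation} with the same inhomogeneous right-hand side to produce $g$, observe $0\le (f-k)_+\le g$, and apply the averaging lemma to $g$. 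This step is short but essential; without it your iteration has no fuel.

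\textbf{Step 2: how the intermediate-value lemma is actually proved.} You correctly flag this as the main obstacle and gesture at ``coupling the energy inequality with a velocity-averaging style argument'' to get a quantitative lower bound on the intermediate set. The paper does \emph{not} obtain such a direct quantitative bound. Instead it argues by contradiction and compactness: take a sequence of sub-solutions with vanishing intermediate set, use the energy estimate to bound $\nabla_v f_k^+$ and the total variation of the defect measures, pass to a weak limit $F$, and upgrade to strong $L^p_{\mathrm{loc}}$ convergence via velocity averaging. The limit $F$ takes only the values $0$ and $1$; since $F\in L^2_{x,t}H^1_v$ it must be independent of $v$, hence $F=\mathbf 1_P(x,t)$. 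Testing the limiting transport inequality against a $v$-mollifier shows $(\partial_t+v_0\cdot\nabla_x)F\le 0$ for every $v_0$ in a ball, which propagates the zero set forward and contradicts the persisting measure of $\{F=1\}$ in the later cylinder. Two consequences: (i) the geometry is not both conditions on $Q_1$ as you wrote, but the ``$f\le 0$'' set sits in a cylinder $\hat Q$ strictly \emph{earlier} in time than the ``$f\ge 1-\theta$'' cylinder $Q_\omega$, precisely so that transport can carry information forward; (ii) the constants $\nu,\theta$ are not effective. Your Step~3 is then identical to the paper's.
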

In order to prove such a result, we first prove that $L^2$
sub-solutions are locally bounded; we refer to such a result as an
$L^2-L^\infty$ estimate. We then prove that solutions are H\"older
continuous by proving a lemma which is an hypoelliptic counterpart of
De~Giorgi's ``isoperimetric lemma''.

We moreover prove a ``quantitative version'' of the strong maximum
principle: a Harnack inequality.
\begin{theorem}[Harnack inequality]\label{thm:harnack}
  If $f$ is non-negative weak solution of \eqref{eq:main} in $Q_1$,
  then
  \begin{equation}\label{eq:harnack}
 \sup_{Q^-} f \le C \left(\inf_{Q^+} f +\|s\|_{L^\infty(Q_1)} \right)
\end{equation}
where $Q^+ := Q_R$ and $Q^- := Q_R (0,0,-\Delta)$ and $C>1$ and
$R,\Delta \in (0,1)$ are small (in particular $Q^\pm \subset Q_1$ and
they are disjoint), and universal, i.e. only depend on dimension and
ellipticity constants.
\end{theorem}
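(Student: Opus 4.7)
The plan is to follow the classical De~Giorgi--Nash--Moser route, combining the $L^2$--$L^\infty$ upper bound for subsolutions (already produced in the course of proving Theorem~\ref{thm:holder}) with a \emph{weak $L^{p_0}$-Harnack inequality} for non-negative supersolutions: for some small $p_0>0$ and an intermediate cylinder $\Q12$ with $Q^- \subset \Q12 \subset Q_1$,
\[ \|f\|_{L^{p_0}(\Q12)} \le C \bigl( \inf_{Q^+} f + \|s\|_{L^\infty(Q_1)} \bigr). \]
Replacing $f$ with $f + \varepsilon + \|s\|_{L^\infty(Q_1)}$ and letting $\varepsilon \to 0$ at the end, one reduces to establishing $\sup_{Q^-} f \le C \inf_{Q^+} f$ for a strictly positive solution of the homogeneous equation.

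For the weak Harnack itself, the plan is Moser's strategy adapted to the hypoelliptic setting. Testing~\eqref{eq:main} against $-\varphi^2/f$ for a cutoff $\varphi$ supported in $\Q12$, and using the ellipticity~\eqref{eq:ellipticity} together with an $H^{-1}_v$ estimate on the transport contribution $(\partial_t + v\cdot\nabla_x)\log f$, yields a Caccioppoli-type bound
\[ \iint \varphi^2 \,|\nabla_v \log f|^2 \, \dd x \dd v \dd t \le C, \]
hence an averaged oscillation control on $\log f$ along velocity slices. The second and more delicate step is to convert this ``kinetic BMO'' information into a genuine $L^{p_0}$ bound over the full $(x,v,t)$-cylinder, via a John--Nirenberg-type inequality adapted to the family $\{Q_r(z_0)\}$ using its invariance under the scaling $(x,v,t)\mapsto(r^3 x,rv,r^2 t)$ and the Galilean transformation~\eqref{eq:transformation}. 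Combined with a Moser-iterated $L^{p_0}$--$L^\infty$ refinement of the subsolution estimate on $Q^- \subset \Q12$, this yields~\eqref{eq:harnack}, with the gap $\Delta>0$ arising naturally from the need to propagate the $L^{p_0}$-mass forward in time along characteristics.

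The main obstacle is the second sub-step above: transferring the purely velocity-local information produced by the log estimate into control over the full kinetic cylinder, where the ellipticity degenerates in the $x$-direction. In the parabolic case this is essentially free from John--Nirenberg, but here the $x$-regularity must be recovered through the transport operator $\partial_t + v\cdot\nabla_x$, which forces a careful use of the Galilean symmetry~\eqref{eq:transformation} and an averaging-lemma-type argument. This mechanism is exactly what dictates that $Q^+$ must lie strictly in the future of $Q^-$ with a gap $\Delta$ that cannot be taken to zero. A viable alternative, more in line with the De~Giorgi style already used for Theorem~\ref{thm:holder}, is an ``expansion of positivity'' lemma (if $|\{f\ge 1\}\cap Q|\ge\nu|Q|$ then $f\ge\delta$ on a time-shifted cylinder), iterated quantitatively to bootstrap a pointwise lower bound; both approaches lead to the same conclusion after chaining the estimates on a dyadic nested family of cylinders.
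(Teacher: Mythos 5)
Your proposal is a program, not a proof, and it diverges from the paper's argument at the very first substantive step. The paper does \emph{not} prove a weak $L^{p_0}$-Harnack, does \emph{not} use the logarithm, and does \emph{not} invoke any John--Nirenberg inequality. Instead it proves Harnack by contradiction: assuming $\sup_{Q^-}f \gg \inf_{Q^+}f + \|s\|_\infty$, it constructs a sequence $(z_k)\subset Q^-[1]$ with $f(z_k)\ge (1-\delta')^{-k}M$, which is absurd since $f$ is locally bounded. The two ingredients are (i) a quantitative \emph{propagation of minima} (Proposition~\ref{prop:propagation}, built from the iterated doubling Lemma~\ref{lem:doubling}, which is itself the ``expansion of positivity'' obtained by applying the measure-to-pointwise Lemma~\ref{lem:measure-to-pointwise} to $1-f/\inf f$), which bounds $\min_{\Qel_r(z)}f$ by $r^{-q}\inf_{Q^+}f$; and (ii) the decrease of oscillation (Proposition~\ref{prop:osc-decrease-true}), which forces the next point $z_{k+1}$ to carry a value close to $f(z_k)$ whenever the local minimum around $z_k$ is small. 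This is a Krylov--Safonov style ``expanding cylinders'' argument, attributed in the text to Silvestre, entirely within the De~Giorgi framework of the earlier sections. Your second, briefly sketched alternative (expansion of positivity iterated quantitatively) is in fact the right mechanism, but it is not developed.

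The route you foreground has two genuine gaps, one of which you do not flag. First, the log-Caccioppoli step: testing~\eqref{eq:main} against $-\varphi^2/f$ and integrating by parts, the hyperbolic term contributes
\[
\iint \log f \,(\partial_t + v\cdot\nabla_x)\varphi^2\,\dd x\,\dd v\,\dd t,
\]
and there is no a priori bound on $\log f$ to close this: the parabolic ``cross trick'' (splitting time into past/future slices and using $\partial_t\varphi^2$ of one sign) controls a one-dimensional drift but does not handle $v\cdot\nabla_x$, which is $d$-dimensional and degenerate. You silently assume an ``$H^{-1}_v$ estimate on the transport of $\log f$'' that is not available at this stage. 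Second, the passage from a $v$-slice BMO bound to an $L^{p_0}$ bound over the full anisotropic cylinder --- the kinetic John--Nirenberg --- is acknowledged as ``the main obstacle'' but no mechanism is given; averaging lemmas give fractional regularity of \emph{velocity averages}, not exponential integrability of $\log f$ over kinetic cubes, and the works that do push Moser's log route through for ultraparabolic equations (e.g.\ the weak Poincar\'e inequality of Wang--Zhang cited in the paper) require substantial and specific machinery precisely at this step. Neither gap can be waved away, so the proposal does not constitute a proof of Theorem~\ref{thm:harnack}.
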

\begin{remark}
  Using the transformation
  $\mathcal{T}_{z_0} (x,v,t)= (x_0+ x + t v_0, v_0+v,t_0+t)$, we get a
  Harnack inequality for cylinders centered at an arbitrary point
  $z_0=(x_0,v_0,t_0)$.
\end{remark}

\subsection{Comments and previously known results}

In \cite{pp}, the authors obtain an $L^2-L^\infty$ estimate with
completely different techniques; however they cannot reach the
H\"older continuity estimate. Our techniques rely on the velocity
averaging method. Velocity averaging designates a special type of
smoothing effect for solutions of the free transport equation
\[
(\partial_t+v\cdot\nabla_x)f=S
\]
observed for the first time in \cite{Agosh,gps} independently, later
improved and generalized in \cite{GLPS,DPL}. This smoothing effect
bears on averages of $f$ in the velocity variable $v$, i.e. on
expressions of the form
\[
\int_{\R^d}f(x,v,t)\,  \phi(v) \, {\rm d} v\,,
\]
say for $C^\infty_c$ test functions $\phi$. Of course, no smoothing on
$f$ itself can be observed, since the transport operator is hyperbolic
and propagates the singularities. However, when $S$ is of the form
\[
S=\nabla_v \cdot \left(A(x,v,t)\nabla_vf\right)+s
\]
where $s$ is a given source term in $L^2$, the smoothing effect of
velocity averaging can be combined with the $H^1$ regularity in the
$v$ variable implied by the energy inequality in order to obtain some
amount of smoothing on the solution $f$ itself. A first observation of
this type (at the level of a compactness argument) can be found in
\cite{PLLCam}. More recently, Bouchut \cite{bouchut} has obtained more
quantitative Sobolev regularity estimates. These estimates are one key
ingredient in our proof.

We give two proofs of this $L^2-L^\infty$ estimate, one following
Moser's approach, the other following De~Giorgi's ideas.  We emphasize
that, in both approaches, the main ingredient is a local gain of
integrability of non-negative sub-solutions. This latter is obtained
by combining a comparison principle and a Sobolev regularity
estimate. We then prove the H\"older continuity through a De~Giorgi
type argument on the decrease of oscillation for solutions. We also
derive the Harnack inequality by combining the decrease of oscillation
with a result about how the minimum of non-negative solutions
deteriorates with time.

In \cite{wz09,wz11}, the authors get a H\"older estimate for weak
solutions of so-called ultraparabolic equations, including
\eqref{eq:main}. Their proof relies on the construction of cut-off
functions and a particular form of weak Poincar\'e inequality
satisfied by non-negative weak sub-solutions.  Our paper proposes an
alternate method based on velocity averaging. It illustrates the
interesting connection between velocity averaging and
hypoelliptic-like structures. It also provides several tools for
further applications.

The $C^\infty$ smoothing of solutions to the Landau equation has been
investigated so far in two different settings: either for weak
spatially homogeneous solutions (non-negative in $L^1$ and with finite
energy) \cite{MR1055522,d04,molecules,dvI} (see also the related
entropy dissipation estimates in \cite{dv,d15}), or for classical
spatially heterogeneous solutions \cite{cdh,MR3191417}. The analytic
regularisation of weak spatially homogeneous solutions was
investigated in the case of Maxwellian or hard potentials in
\cite{MR2557895}. Let us also mention that in \cite{luislandau},
Silvestre derives an $L^\infty$ bound on the spatially homogeneous
solutions for soft potentials \emph{without relying on energy methods}
(which implies as well the smoothing by standard parabolic
techniques). Let us also mention works studying modified Landau
equations~\cite{MR2901061,MR2914961} and the work \cite{MR3599518}
that shows that any weak radial solution to the Landau-Coulomb
equation that belongs to $L^{3/2}$ is automatically bounded and $C^2$
using barrier arguments. Finally, we highlight the related results of
regularisation for the Boltzmann equation without long-range
interactions \cite{MR1324404,MR2820356,MR2885564}, and the related
perturbative results for the Landau and (long-range interaction)
Boltzmann equation
\cite{MR1946444,MR2784329,MR2679369,MR2795331,MR2556715,MR3158719,MR3375485}.
From this review, and the best of our knowledge, the regularity of
\emph{a priori} non-negative locally $L^\infty$ solutions (under our
assumption \eqref{e:meh}) to the spatially heterogeneous Landau
equation has not investigated so far.

A part of the results of this paper were announced in \cite{gv,im}. 

\subsection{Plan of the paper}
\label{sec:plan-paper}

In Section~\ref{sec:local-gain-integr}, we prove the universal gain of
integrability for non-negative sub-solutions. In Section~\ref{sec:up},
we derive from this gain of integrability a local upper bound of such
non-negative sub-solutions; we give two proofs: one following de
Giorgi's approach and the other one following Moser's iteration
procedure.  In Section~\ref{sec:osc-decrease}, the H\"older estimate
is derived by proving a lemma of ``reduction of oscillation''.  In
Section~\ref{sec:harnack} we prove a Harnack inequality for
non-negative solutions. In Section~\ref{sec:local-gain-regul}, we
prove a local gain of regularity of sub-solutions. In
Section~\ref{sec:l2+eps}, we prove that the velocity gradient of the
solution is slightly better than square integrable.

\subsection{Notation} We occasionally write $A \lesssim B$ in order to
say that $A \le \bar C B$ for some constant $\bar C$ which only
depends on dimension and ellipticity constants $\lambda$ and
$\Lambda$. Such a constant $\bar C$ is called \emph{universal}.

The inverse transformation $\mathcal{T}_{z_0}^{-1} : z \mapsto 
z_0^{-1} \circ z$ is defined by
\[ \mathcal{T}^{-1}_{z_0} (z) = (x-x_0 -(t-t_0)v_0,v-v_0,t-t_0).\]
The notation $z_0 \circ z$ and $z_0^{-1}$ refers to a Lie group
structure associated with the equation.

\section{Local gain of regularity / integrability}
\label{sec:local-gain-integr}

We consider the equation \eqref{eq:main} and we want to establish a
local gain of integrability of solutions in order to apply De
Giorgi-Moser's iteration and get a local $L^\infty$ bound. Since we
will need to perform convex changes of unknown, it is necessary to
obtain this gain for all (non-negative) \emph{sub-solutions}. The next
theorem is stated in cylinders centered at the origin.
\begin{theorem}[Gain of integrability for non-negative sub-solutions]\label{thm:gain}
  Consider two cylinders $\Qint :=Q_{r_1}$ and $\Qext :=Q_{r_0}$
  with $0< r_1 < r_0$.  There exists $p>2$ (only depending on dimension)
  such that for all non-negative sub-solution $f$ of \eqref{eq:main}
  in $\Qext$, we have
\begin{equation}
  \label{eq:gain}
  \n{f}_{L^p (\Qint)}^2 \le  \bar C \left( C_{0,1}^2  \n{f}_{L^2(\Qext)}^2 
 + C_{0,1} \int_{\Qext} |s|^2\un_{f>0} \right)
\end{equation}
with 
\[
C_{0,1} =  \left(\frac1{r_0^2 -r_1^2}+\frac{r_0}{r_0^3-r_1^3}+ \frac1{(r_0-r_1)^2}+ 1
\right) \quad \mbox{ and } \quad \bar C= \bar C (d,\lambda,\Lambda) \,
.
\]
\end{theorem}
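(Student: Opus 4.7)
The plan is to combine a local energy (Caccioppoli) estimate coming from the ellipticity of $A$ with a velocity-averaging / hypoelliptic regularity result on the truncated unknown $g:=\chi f$, and then close with a Sobolev embedding. The gain $p>2$ will appear purely from dimension through Bouchut-type regularity.

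\textbf{Step 1: Caccioppoli inequality.} Pick a smooth cut-off $\chi(x,v,t)$ with $0\le \chi \le 1$, $\chi\equiv 1$ on $\Qint$, $\supp\chi\subset\Qext$, and
\[
|\partial_t\chi|\lesssim\tfrac{1}{r_0^2-r_1^2},\qquad |\nabla_v\chi|\lesssim\tfrac{1}{r_0-r_1},\qquad |\nabla_x\chi|\lesssim\tfrac{1}{r_0^3-r_1^3},
\]
so that $|(\partial_t+v\cdot\nabla_x)\chi|+|\nabla_v\chi|^2\lesssim C_{0,1}$ on $\Qext$. Testing the sub-solution inequality against $\chi^2 f\ge 0$ (which is admissible by the weak-solution definition), integrating by parts in $v$ and handling the transport term by moving the derivative onto $\chi^2$, then using $A\ge\lambda I$, $|A|\le\Lambda$, $|B|\le\Lambda$ and Cauchy-Schwarz/Young, yields the local energy estimate
\[
\sup_{t}\int \chi^2 f^2\,\dd x\,\dd v \;+\; \lambda\int_{\Qext}\chi^2|\nabla_v f|^2 \;\lesssim\; C_{0,1}\,\|f\|_{L^2(\Qext)}^2 \;+\; \int_{\Qext}|s|^2\un_{f>0}.
\]
The indicator $\un_{f>0}$ appears because only the support of $\chi^2 f$ contributes to the $s$-term, and $\chi^2 f=0$ where $f=0$.

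\textbf{Step 2: Equation for $g=\chi f$ and averaging.} Multiplying the sub-solution inequality by $\chi$ and using the product rule gives
\[
\partial_t g + v\cdot\nabla_x g \;\le\; \nabla_v\cdot\bigl(\chi A\nabla_v f\bigr) \;-\; \nabla_v\chi\cdot A\nabla_v f \;+\; \chi B\cdot\nabla_v f \;+\; \chi s \;+\; f(\partial_t+v\cdot\nabla_x)\chi.
\]
Rewriting $\chi A\nabla_v f=A\nabla_v g - f A\nabla_v\chi$, the right-hand side has the form $\nabla_v\cdot G_1 + G_0$ with $G_1,G_0\in L^2(\R^{2d+1})$ whose norms are controlled, thanks to Step~1, by the right-hand side of \eqref{eq:gain}. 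Since $g$ itself lies in $L^2_t H^1_v$ with the same bound, and since the hypoelliptic transport operator $\partial_t+v\cdot\nabla_x$ on the left plus the $v$-Laplacian-like structure on the right fit the framework of velocity-averaging/Bouchut-type regularity for kinetic equations, one obtains a fractional Sobolev bound
\[
\|g\|_{H^{s}(\R^{2d+1})}^2 \;\lesssim\; C_{0,1}^2\,\|f\|_{L^2(\Qext)}^2 \;+\; C_{0,1}\int_{\Qext}|s|^2\un_{f>0}
\]
for some universal $s=s(d)>0$. The sub-solution inequality (rather than equality) is harmless: we write it as an equality by subtracting a non-negative distribution $\mu\ge 0$ from the right-hand side, and since $\mu\ge 0$ only improves the sign in the averaging estimate after the sub-solution/super-solution splitting (alternatively, one truncates by Kirszbraun and approximates).

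\textbf{Step 3: Sobolev embedding.} Since $g$ has compact support in $\R^{2d+1}$ and lies in $H^s$ with $s>0$ universal, the Sobolev embedding $H^s(\R^{2d+1})\hookrightarrow L^p(\R^{2d+1})$ with $p=\tfrac{2(2d+1)}{2d+1-2s}>2$ gives
\[
\|f\|_{L^p(\Qint)}^2 \;\le\; \|g\|_{L^p(\R^{2d+1})}^2 \;\lesssim\; \|g\|_{H^s}^2 \;\lesssim\; C_{0,1}^2\|f\|_{L^2(\Qext)}^2+C_{0,1}\int_{\Qext}|s|^2\un_{f>0},
\]
which is \eqref{eq:gain}.

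\textbf{Main obstacle.} The delicate step is Step~2: one needs a kinetic velocity-averaging estimate applicable to solutions (or sub-solutions) of $\partial_t g + v\cdot\nabla_x g = \nabla_v\cdot G_1 + G_0$ with $g\in L^2_tH^1_v$ and $G_0,G_1\in L^2$, producing a genuine fractional Sobolev norm in \emph{all} variables $(x,v,t)$. Getting the precise interplay between the $v$-regularity from the energy estimate and the $(x,t)$-regularity gained through averaging, while respecting the anisotropic scaling $(x,v,t)\mapsto(r^3x,rv,r^2t)$ and keeping the constants tracked by $C_{0,1}$, is the heart of the argument. The treatment of the sub-solution (as opposed to solution) framework also requires care, but can be handled by truncation/approximation as indicated.
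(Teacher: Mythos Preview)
Your Steps~1 and~3 are fine, but Step~2 has a genuine gap: the defect measure cannot be dismissed in the way you suggest. When you write $g=\chi f$ for a sub-solution $f$, the function $g$ satisfies
\[
\partial_t g + v\cdot\nabla_x g \;=\; \nabla_v\cdot G_1 + G_0 - \mu
\]
with $\mu\ge 0$ a non-negative measure. The averaging lemma you need (Bouchut's result, or the paper's Lemma on global estimates) requires the right-hand side to be of the form $\nabla_v\cdot H_1 + H_0$ with $H_0,H_1\in L^2$. The measure $\mu$ is \emph{not} in $L^2$; the energy estimate only gives it bounded total variation, which places it in $H^{-s}_{x,v,t}$ only for $s>(2d+1)/2$, far too weak for the $H^{1/3}$ gain. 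There is no sign structure in the Fourier-based proof of averaging that lets you drop $\mu$ ``for free'', and the Kirszbraun/truncation remark does not apply here. Indeed, later in the paper (Section~\ref{sec:local-gain-regul}) the defect measure \emph{is} handled directly, but only \emph{after} the $L^\infty$ bound is already available, and even then it yields only $H^{\mathfrak s}$ with a small $\mathfrak s<1/8$, not the clean $H^{1/3}$ needed here.

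The paper's device to bypass this is a \emph{comparison principle}. One writes $f_1=\chi_1 f$, which satisfies the \emph{inequation} with right-hand side $\nabla_v\cdot H_1+H_0$, and then introduces the genuine \emph{solution} $g$ of
\[
(\partial_t+v\cdot\nabla_x)g=\nabla_v\cdot(A\nabla_v g)+\nabla_v\cdot H_1+H_0 \quad\text{in }\R^{2d+1}
\]
with the same $H_0,H_1$ (supported in $\Qext$). Since $h:=f_1-g$ is a sub-solution of the homogeneous equation with zero initial data, the maximum principle gives $0\le f_1\le g$. The averaging lemma now applies cleanly to the \emph{solution} $g$ (no defect measure), yielding $\|g\|_{L^p}$ controlled by $\|H_0\|_{L^2}+\|H_1\|_{L^2}$, and one concludes via $\|f\|_{L^p(\Qint)}\le\|f_1\|_{L^p}\le\|g\|_{L^p}$. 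This comparison step is the missing idea in your argument.
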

\begin{remark}\label{rem:p} 
  The exponent $p$ is obtained by the Sobolev embedding
  $H^{\frac13} (\R^{2d+1}) \hookrightarrow L^p (\R^{2d+1})$, that is to say
  $p := 6(2d+1)/ (6d+1)$.
\end{remark}
This result is a consequence of the comparison principle and the
following gain of regularity.
\begin{theorem}[Gain of regularity for sign-changing solutions]\label{thm:gain-diff}
  Consider $z_0 \in \R^{2d+1}$ and two cylinders $\Qint :=Q_{r_1}(z_0)$
  and $\Qext:=Q_{r_0}(z_0)$ with $0<r_1 < r_0$.  Then any (sign-changing)
  weak solution $f$ of \eqref{eq:main} in $\Qext$ satisfies
\begin{equation}
  \label{eq:gain-diff}
  \n{f}_{H^{\frac13}_{x,v,t} (\Qint)}^2 \le C 
\left(  \n{f}_{L^2(\Qext)}^2 + \n{s}_{L^2(\Qext)}^2  \right)
\end{equation}
with $C =C(d,\lambda,\Lambda,\Qext,\Qint)$. 
\end{theorem}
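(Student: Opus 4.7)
The plan is to localize $f$ via a smooth cutoff and then apply Bouchut's kinetic hypoelliptic averaging estimate \cite{bouchut}, whose role is to turn the available $H^1_v$ regularity of weak solutions into $H^{1/3}$-regularity in $(t,x)$ through the skew-symmetric transport term $\partial_t + v\cdot\nabla_x$. A Fourier-side concavity argument then recombines the two estimates into the announced isotropic $H^{1/3}$ bound.

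First I would establish a Caccioppoli-type energy inequality on an intermediate cylinder. Fix radii $r_1 < r_{1/2} < r_0$ and choose $\chi_1 \in C^\infty_c(\R^{2d+1})$ with $\chi_1 \equiv 1$ on $Q_{r_{1/2}}(z_0)$ and $\supp \chi_1 \subset \Qext$, noting that $\chi_1$ need not vanish at the top time $t_0$ (the corresponding boundary term has a favourable sign). Testing \eqref{eq:main} against $\chi_1^2 f$, performing the usual integration by parts (the transport contribution reduces to $-\tfrac12\int f^2 (\partial_t+v\cdot\nabla_x)\chi_1^2$ by skew-symmetry), and using the ellipticity $A \ge \lambda I$, the $L^\infty$ bound on $B$, and Young's inequality to absorb cross terms yields
\begin{equation}\label{e:cacc}
\|\nabla_v f\|_{L^2(Q_{r_{1/2}}(z_0))}^2 \le C \bigl(\|f\|_{L^2(\Qext)}^2 + \|s\|_{L^2(\Qext)}^2 \bigr).
\end{equation}

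Next, let $\chi_2 \in C^\infty_c(\R^{2d+1})$ satisfy $\chi_2 \equiv 1$ on $\Qint$ and $\supp \chi_2 \subset Q_{r_{1/2}}(z_0)$, and set $g := \chi_2 f$, extended by zero to $\R^{2d+1}$. A direct computation shows that $g$ is a distributional solution on $\R^{2d+1}$ of $(\partial_t + v\cdot\nabla_x)g = \nabla_v\cdot F_1 + F_0$, with $F_1 := \chi_2 A\nabla_v f$ and $F_0$ gathering the four terms $\chi_2 s$, $\chi_2 B\cdot\nabla_v f$, $-(A\nabla_v f)\cdot\nabla_v\chi_2$ and $f(\partial_t+v\cdot\nabla_x)\chi_2$. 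The $L^\infty$ bounds on $A, B$ together with \eqref{e:cacc} ensure that $\|F_0\|_{L^2(\R^{2d+1})}$, $\|F_1\|_{L^2(\R^{2d+1})}$, $\|g\|_{L^2}$ and $\|\nabla_v g\|_{L^2}$ are all controlled by $\|f\|_{L^2(\Qext)} + \|s\|_{L^2(\Qext)}$.

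Finally, Bouchut's hypoelliptic regularity estimate \cite{bouchut} applied to $g$ yields $g \in H^{1/3}_{t,x}(\R^{2d+1})$, the exponent $1/3$ being dictated by the parabolic-kinetic scaling $(x,v,t)\mapsto(r^3 x, r v, r^2 t)$. Since $g \in H^1_v$, the subadditivity $(1+|\tau|^2+|\xi|^2+|\eta|^2)^{1/3} \le C\bigl[(1+|\tau|^2+|\xi|^2)^{1/3} + (1+|\eta|^2)^{1/3}\bigr]$ on the Fourier side gives $g \in H^{1/3}(\R^{2d+1})$, and since $g = f$ on $\Qint$, the estimate \eqref{eq:gain-diff} follows. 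The main obstacle is the precise form of the averaging lemma invoked: it must accommodate a source of the form $\nabla_v\cdot F_1 + F_0$ with $F_0, F_1 \in L^2$ and yield the sharp exponent $1/3$ in $(t,x)$. Once this black box is in place, the Caccioppoli and localization steps are standard bookkeeping.
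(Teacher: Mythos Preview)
Your proposal is correct and follows essentially the same route as the paper: a Caccioppoli estimate on an intermediate cylinder (the paper's Lemma~\ref{lem:caccio}), localization by a cutoff, and then Bouchut's averaging lemma applied to the compactly supported function (packaged in the paper as Lemma~\ref{lem:global}). The only cosmetic difference is bookkeeping: the paper keeps the diffusion term $\nabla_v\cdot(A\nabla_v f_1)$ explicit in the equation for the truncated function and lets the global lemma absorb it, whereas you fold it directly into $F_1=\chi_2 A\nabla_v f$; after unpacking, the source fed to Bouchut is identical in both cases.
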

\begin{remark}
  Using Theorem~\ref{thm:gain} and De~Giorgi-Moser's iteration, it is
  in fact possible to prove that this gain of regularity is also true
  for non-negative sub-solutions, as we will see in
  Section~\ref{sec:local-gain-regul}.
\end{remark}


\subsection{Global  estimates and  gain of regularity/integrability}
\label{subsec:global}

Remark that our weak solutions in
$f \in L^\infty_t(I,L^2_{x,v}(U_x \times U_v))) \cap L^2_{x,t}(U_x
\times I, H^1_v(U_v))$ are in
$C^0_t(I,L^2_{x,v}(U_x \times U_v) \cap H^{1/2}_t(I,L^2_{x,v}(U_x
\times U_v))$, following and adapting respectively the by-now standard
arguments in \cite{MR627929} and \cite{MR0338568} to the kinetic
case. This justifies the calculations performed in our energy estimates
in the sequel. 

\begin{lemma}[Global  estimate]\label{lem:global} 
Let $g$ be a weak solution  of
\[   (\partial_t  + v \cdot \nabla_x) g = \nabla_v \cdot (A \nabla_v g) + \nabla_v \cdot H_1 + H_0
\quad \text{ in } \quad \R^{2d+1} \]
with $H_1$ and $H_0$ in $L^2 (\R^{2d+1})$ and $g, H_0$ and $H_1$ supported in $\R^d \times B(0,r_0) \times \R$. 
Then
\begin{equation}
  \label{e:gain-txv} \|\nabla_v g \|^2_{L^2} + \| D^{\frac13}_x g \|_{L^2}^2 + \|D^{\frac13}_t g\|_{L^2}^2 \le C \bigg(
  \|H_1\|_{L^2}^2 +  \|H_0  \|_{L^2}^2 \bigg)
\end{equation}
where $C = \bar C (1+r_0^2)$ and $\bar C = \bar C (d,\lambda,\Lambda)$. 
In particular, there exists $p>2$ (only depending on dimension) such that 
\begin{equation}\label{e:gain-int}
 \| g \|_{L^p} ^{2} \le C \bigg( \|H_1\|_{L^2}^2 +\|H_0\|_{L^2}^2 \bigg) 
\end{equation}
where $C = \bar C (1+r_0^2)$ and
$\bar C = \bar C (d,\lambda,\Lambda)$.
\end{lemma}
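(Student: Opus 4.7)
The plan is to combine a standard energy identity (yielding $L^2$ control of $\nabla_v g$) with a kinetic velocity averaging estimate of Bouchut type (which converts this $v$-regularity into fractional Sobolev regularity in $(x,t)$), and then to conclude \eqref{e:gain-int} by a Sobolev embedding. For the first step, I would test the equation against $g$ on the whole of $\R^{2d+1}$: the transport piece produces
\[ \int_{\R^{2d+1}} g(\partial_t + v\cdot\nabla_x) g \dd x \dd v \dd t = \tfrac12 \int_{\R^{2d+1}} (\partial_t + v \cdot \nabla_x)(g^2) \dd x \dd v \dd t = 0, \]
the integration by parts being justified by $g \in L^2(\R^{2d+1})$ together with its compact support in $v$. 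Ellipticity $A \ge \lambda I$, Cauchy--Schwarz, and a Poincar\'e (Friedrichs) inequality in $v$ (which gives $\|g\|_{L^2} \le 2r_0 \|\nabla_v g\|_{L^2}$ because $g$ is supported in $\{|v|\le r_0\}$) then combine to yield
\[ \|\nabla_v g\|_{L^2}^2 \le \bar C (1+r_0^2) \bigl( \|H_0\|_{L^2}^2 + \|H_1\|_{L^2}^2 \bigr). \]

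Next I would rewrite the equation as a pure transport equation with $L^2$ source,
\[ (\partial_t + v \cdot \nabla_x) g = \nabla_v \cdot S_1 + S_0, \qquad S_1 := A\nabla_v g + H_1, \quad S_0 := H_0, \]
with $\|S_0\|_{L^2}^2 + \|S_1\|_{L^2}^2$ already controlled by the previous step. Together with the $L^2_{x,t} H^1_v$ regularity of $g$ and its compact $v$-support, I would invoke a Bouchut-type averaging lemma: Fourier localising in $(x,t)$ near the characteristic variety $\{\tau + v \cdot \xi = 0\}$ and using $\nabla_v g \in L^2$ to bound the measure of thin $v$-slabs in which the multiplier cannot be inverted, one extracts
\[ \|D^{1/3}_x g\|_{L^2}^2 + \|D^{1/3}_t g\|_{L^2}^2 \le \bar C \bigl( \|g\|_{L^2}^2 + \|\nabla_v g\|_{L^2}^2 + \|H_0\|_{L^2}^2 + \|H_1\|_{L^2}^2 \bigr). \]
The exponent $1/3$ is exactly the one dictated by the hypoelliptic scaling $(x,v,t)\mapsto(r^3x,rv,r^2t)$ used throughout the paper, in which one $v$-derivative is worth $1/3$ of an $x$-derivative. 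Combined with Step~1 (and absorbing $\|g\|_{L^2}^2$ into $r_0^2 \|\nabla_v g\|_{L^2}^2$ via Poincar\'e once more), this gives \eqref{e:gain-txv}.

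Finally, \eqref{e:gain-txv} says $g \in H^{1/3}(\R^{2d+1})$ with a norm controlled by its right-hand side, and the Sobolev embedding $H^{1/3}(\R^{2d+1}) \hookrightarrow L^p(\R^{2d+1})$ with $p = 6(2d+1)/(6d+1) > 2$ (see Remark~\ref{rem:p}) immediately delivers \eqref{e:gain-int}. The main obstacle is the velocity averaging step: the energy identity and the Sobolev embedding are routine, but one has to carefully verify the averaging lemma in the form needed here, so as to produce the specific exponent $1/3$ in both $x$ and $t$ and to ensure that the universal character of the constant is preserved up to the factor $1+r_0^2$ already produced by the energy estimate.
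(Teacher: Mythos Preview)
Your approach is essentially the same as the paper's: energy estimate plus Poincar\'e in $v$ for the gradient bound, then Bouchut's averaging lemma for the $D^{1/3}_{x,t}$ regularity, then Sobolev embedding. The only point to sharpen is that the paper invokes \cite[Theorem~1.3]{bouchut} as a black box (with the parameter choice $p=2$, $r=0$, $\beta=m=\kappa=\Omega=1$), and that theorem carries velocity weights $(1+|v|^2)^{1/2}$, $(1+|v|^2)$ on the source terms---so the factor $(1+r_0^2)$ in \eqref{e:gain-txv} enters \emph{both} through Poincar\'e in the energy step and through these weights in the averaging step, not only through the former as your write-up suggests.
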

\begin{proof}
Integrating against $2g$ in $\R^{2d+1}$ yields
\begin{multline*}
2 \lambda \int_{\R^{2d+1}}  |\nabla_v g |^2 \dd x \dd v \dd t \le 
 \int_{\R^{2d+1}} (-2 H_1 \cdot \nabla_v g +  2 g H_0) \dd x \dd v \dd t \\
 \le  \frac{\lambda}2 \int_{\R^{2d+1}} |\nabla_v g |^2 \dd x \dd v \dd t 
       + \frac2{\lambda} \int_{\R^{2d+1}} | H_1|^2 \dd x \dd v \dd t +
       2 \int_{\R^{2d+1}}  |g| |H_0| \dd x \dd v \dd t .  
\end{multline*}
Moreover
\[ 2 \int_{\R^{2d+1}} |g||H_0| \dd x \dd v \dd t \le \eps
\int_{\R^{2d+1}} |g|^2 \dd x \dd v \dd t + \frac1{\eps}
\int_{\R^{2d+1}} |H_0|^2 \dd x \dd v \dd t .\]
Since $g$ is supported in $B(0,r_0)$ in the velocity variable, we can
use the Poincar\'e inequality to get
\[
\eps \int_{\R^{2d+1}} |g|^2 \dd x \dd v \dd t \le C_P r_0^2 \eps \int_{\R^{2d+1}}
|\nabla_v g|^2 \dd x \dd v \dd t
\]
and we choose $\eps$ such that $C_P r_0^2 \eps = \lambda/2$.  This implies
\begin{equation}\label{e:grad g}
 \|\nabla_v g \|_{L^2}^2  \le   C \left( \| H_1\|_{L^2}^2+     \| H_0 \|_{L^2}^2
 \right) .   
\end{equation}

Applying \cite[Theorem~1.3]{bouchut} with $p=2$, $r=0$, $\beta =1$,
$m=1$, $\kappa=1$ and $\Omega =1$ yields
\begin{align*}
 \| D^{\frac13}_x g \|_{L^2}^2 + \|D^{\frac13}_t g\|_{L^2}^2  \lesssim  \ & \|g\|_{L^2}^2 
 + \| \nabla_v g \|_{L^2} \|(1+|v|^2)^{\frac12}  H_0  \|_{L^2} \\
& +   \| \nabla_v g \|_{L^2}^{\frac43} \|   (1+|v|^2) (H_1 +A \nabla_v g) \|_{L^2}^{\frac23} \\
& +   \| \nabla_v g \|_{L^2} \| (1+|v|^2)^{\frac12} ( H_1 +A \nabla_v g )\|_{L^2} .
\end{align*}
Using the fact that $g$, $H_0$ and $H_1$ are supported in
$\R^d \times B(0,r_0) \times \R$, we get
\begin{align*}
  \| D^{\frac13}_x g \|_{L^2}^2 + \|D^{\frac13}_t g\|_{L^2}^2  \lesssim \ & r_0^2   \|\nabla_v g\|_{L^2}^2 
  + (1+r_0^2)^{\frac12} \| \nabla_v g \|_{L^2} \|  H_0  \|_{L^2} \\
& \phantom{r_0^2   \|\nabla_v g\|_{L^2}^2}
+  (1+r_0^2)^{\frac23} \| \nabla_v g \|_{L^2}^{\frac43} \left(\| H_1 \|_{L^2}^{\frac23} +\| \nabla_v g \|_{L^2}^{\frac23}\right) \\
& \phantom{r_0^2   \|\nabla_v g\|_{L^2}^2} +   (1+r_0^2)^{\frac12} \| \nabla_v g \|_{L^2} \left(\|  H_1 \|_{L^2} +\| \nabla_v g \|_{L^2} \right) \\
\lesssim  \ &  (1+r_0^2) \left( \|\nabla_v g\|_{L^2}^2 + \|H_1\|_{L^2}^2 + \|\nabla_v g\|_{L^2} \|H_0\|_{L^2} \right).
\end{align*}
Combining this estimate with \eqref{e:grad g} yields
\eqref{e:gain-txv}. The proof is now complete.
\end{proof}

\subsection{The local energy estimate}

\label{subsec:reverse}

The gain of integrability with respect to $v$ and $t$ is classical; it
derives from the natural energy estimate, after truncation. We follow
here \cite{moser}.
\begin{lemma}[The local energy estimate]\label{lem:caccio}
  Under the assumptions of Theorems~\ref{thm:gain} and
  \ref{thm:gain-diff}, any sub-solution $f$ satisfies
\begin{equation}
\label{eq:h1v-loc}
\sup_{t} \int_{\Qint^t} f^2 (\cdot,\cdot,t)  + 
\int_{\Qint}  |\nabla_v f|^2  \le \bar C \left(
  C_{0,1} \int_{\Qext} f^2  + \int_{\Qext}  |s|^2 \right)
\end{equation}
for $\Qint^t := \{ (x,v) \in \R^{2d}: (x,v,t) \in \Qint\}$,  $\bar C = \bar C (d,\lambda,\Lambda)$ and
\[ C_{0,1} = \left(
  \frac1{r_0^2-r_1^2}+\frac{r_0}{r_0^3-r_1^3}+\frac1{(r_0-r_1)^2}  +1 \right).\]
Moreover, if the sub-solution $f$ is non-negative, then
\begin{equation}
\label{eq:h1v-loc-bis}
\sup_{t} \int_{\Qint^t} f^2 (\cdot,\cdot,t) + 
 \int_{\Qint}  |\nabla_v f|^2  \le \bar C \left(
   C_{0,1} \int_{\Qext} f^2 + \int_{\Qext} |s|^2
   \un_{f>0} \right) .
\end{equation}
\end{lemma}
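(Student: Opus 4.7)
The plan is to carry out a standard Caccioppoli / De~Giorgi energy estimate adapted to the kinetic geometry. First, by applying the transformation $\mathcal{T}_{z_0}$ (which preserves the form of \eqref{eq:main} and the cylinder family) we may reduce to $z_0=0$. I choose a product cutoff $\chi(x,v,t) = \chi_t(t)\,\chi_x(x)\,\chi_v(v)$, smooth, compactly supported in $\Qext$ and equal to $1$ on $\Qint$, with
\[
|\chi_t'| \lesssim \frac{1}{r_0^2-r_1^2},\qquad
|\nabla_x \chi_x| \lesssim \frac{1}{r_0^3-r_1^3},\qquad
|\nabla_v \chi_v| \lesssim \frac{1}{r_0-r_1}.
\]
These weights are exactly what produces the three geometric pieces of $C_{0,1}$; the fourth (the ``$+1$'') will come from the drift term.

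Next I test the inequality (resp. equation) satisfied by $f$ against the admissible test function $f\chi^2$ (non-negative in the sub-solution case, arbitrary in the solution case) and integrate on the time slab $\{t \le \tau\}$ for $\tau \in (-r_1^2,0]$. The transport term is handled by the identity
\[
f(\partial_t + v\cdot\nabla_x) f = \tfrac12 (\partial_t + v\cdot\nabla_x) f^2,
\]
combined with integration by parts in $x$ (using $\nabla_x\cdot v = 0$) and the vanishing of $\chi$ at $t=-r_0^2$. This yields a boundary term $\tfrac12 \int f^2(\tau)\chi^2(\tau)$ that will provide the $\sup_t$-estimate, plus a bulk remainder controlled by $|\partial_t\chi^2| + |v\cdot\nabla_x\chi^2|$; on $\supp\chi$ one has $|v|\le r_0$, giving the contributions $(r_0^2-r_1^2)^{-1}$ and $r_0(r_0^3-r_1^3)^{-1}$. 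For the diffusion term, integration by parts in $v$ produces $\int A\nabla_v f\cdot\nabla_v f\,\chi^2 \ge \lambda \int |\nabla_v f|^2\chi^2$ together with a cross term $2\int A\nabla_v f \cdot f\chi\nabla_v \chi$; Young's inequality absorbs half of the gradient and leaves $\lesssim (r_0-r_1)^{-2}\int f^2$. The drift term $\int B\cdot \nabla_v f\, f\chi^2$ is absorbed similarly into the gradient, leaving a $\Lambda^2/\lambda \int f^2 \chi^2$ piece which is the ``$+1$'' in $C_{0,1}$. Finally the source contributes $\int sf\chi^2 \le \tfrac12 \int f^2\chi^2 + \tfrac12 \int |s|^2 \chi^2$, and in the non-negative sub-solution case we use $sf = s\un_{f>0}f$ to replace $|s|^2$ by $|s|^2\un_{f>0}$, which is exactly the improvement in \eqref{eq:h1v-loc-bis}. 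Taking the supremum over $\tau$ on the boundary term and dropping the gradient cutoff on $\Qint$ finishes the estimate.

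The main obstacle, and the reason the estimate has the kinetic flavor rather than a purely parabolic one, is the treatment of the free-transport commutator with the cutoff: the slanted geometry of $Q_r$ means that $\chi_x$ lives at scale $r^3$ while $v$ lives at scale $r$, so a naive $|v|/(r_0-r_1)^3$ would lose the correct power. The right bookkeeping is to use $|v|\lesssim r_0$ on $\supp\chi$ and the difference of cubes in the denominator, which is built into the definition of $C_{0,1}$. Once this is done, all other terms are handled by standard Young/Cauchy--Schwarz absorption into the coercive $\lambda\int|\nabla_v f|^2\chi^2$ coming from ellipticity.
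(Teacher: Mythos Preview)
Your plan is correct and follows essentially the same Caccioppoli-type argument as the paper: test the (in)equation with $2f\chi^2$, integrate by parts in $v$ to extract the coercive term $\lambda\int|\nabla_v f|^2\chi^2$, move the transport operator onto $\chi^2$ to produce the geometric constants, and absorb the drift and cross terms by Young's inequality. Your bookkeeping of the four pieces of $C_{0,1}$ (time cutoff, $x$-cutoff weighted by $|v|\le r_0$, $v$-cutoff squared, and the ``$+1$'' from the $B$-term) matches the paper exactly, as does your observation that for non-negative sub-solutions one may replace $s$ by $s\,\un_{f>0}$ before applying Cauchy--Schwarz.
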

\begin{proof}
  Consider $\Psi \in C^\infty_c(\R^{2d} \times \R)$ with
  $0 \le \Psi \le 1$ and integrate the inequation satisfied by $f$
  against $2 f \Psi^2$ in $\cR:=\R^{2d} \times [t_1,0]$ with
  $t_1 \in (-r_1^2,0]$ and get
\begin{equation*} 
\int_{\cR} \partial_t (f^2) \Psi^2 +  \int_{\cR} v \cdot \nabla_x (f^2) \Psi^2 
\le 2 \int_{\cR} \nabla_v \cdot ( A \nabla_v f) f \Psi^2 + 2 \int_{\cR}  (B \cdot \nabla_v f ) f \Psi^2  
+ 2\int_{\cR} fs \Psi^2 .
\end{equation*}
Add $\int_{\cR} f^2 \partial_t (\Psi^2)$, integrate by parts and use
the upper bound on $A$ to get
\begin{align*}
 & \int_{\cR} \partial_t (f^2 \Psi^2) +  2 \lambda \int_{\cR} |\nabla_v f |^2
\Psi^2 \\
 & \quad \le   \int_{\cR} f^2 (\partial_t + v \cdot \nabla_x )(\Psi^2) 
-4 \int_{\cR}  \Psi A \nabla_v f \cdot  f  \nabla_v \Psi  
+ 2 \int_{\cR} (B \cdot \nabla_v f) f \Psi^2 +2\int_{\cR} fs \Psi^2 \\
& \quad \le \int_{\cR} f^2 (\partial_t + v \cdot \nabla_x )(\Psi^2) 
 +  4 \Lambda  \int_{\cR} ( |\nabla_v f |\Psi)f( \Psi+|\nabla_v \Psi|) +  2\int_{\cR} fs \Psi^2 \\
& \quad  \le  \int_{\cR} f^2 \Big[(\partial_t + v \cdot \nabla_x )(\Psi^2) + 8 \Lambda^2 \lambda^{-1}(|\nabla_v \Psi|^2 
+ \Psi^2) \Big] + 2 \int_{\cR} fs \Psi^2 +  \lambda \int_{\cR} |\nabla_v f|^2 \Psi^2.
\end{align*}
We thus get 
\begin{equation} \label{eq:nrj-estimate}
  \int_{\cR} \partial_t (f^2 \Psi^2) + \lambda \int_{\cR} |\nabla_v f|^2
  \Psi^2  
\le \bar{C} \left( \n{\partial_t \Psi}_{\infty} + r_0 \n{\nabla_x \Psi}_{\infty} +
    \n{\nabla_v \Psi}_{\infty}^2 + 1 \right)   \int_{\cR \cap \supp \Psi} f^2 + 2\int_{\cR} fs \Psi^2 
\end{equation}
with $\bar{C} = \bar C(d,\lambda,\Lambda)$.  Choose next $\Psi^2$ such
that $\Psi (t=0)=0$ and $\supp \Psi \subset \Qext$ and get for $t_1
\in \R$:
\[ \int_{\R^{2d}} f^2(\cdot,\cdot,t_1)  \Psi^2 (t_1) \dd x \dd v + \lambda
\int_{\R^{2d+1}} |\nabla_v f|^2 \Psi^2 \dd x \dd v \dd t
\le  C   \int_{\Qext} f^2  + 2 \int_{\Qext} |f| \, |s| . \]

If $\Psi$ additionally satisfies $\Psi \equiv 1$ in $\Qint$, we get
\eqref{eq:h1v-loc}. Remark that \eqref{eq:h1v-loc-bis} is a simple consequence
of \eqref{eq:h1v-loc}. 
 The proof is now complete. 
\end{proof}

\subsection{Local gain: proofs}

\begin{proof}[Proof of Theorems~\ref{thm:gain} and \ref{thm:gain-diff}]
  We first remark that if $f$ is a non-negative sub-solution of
  \eqref{eq:main}, then $f=f \un_{f \ge 0}$ and it is also a
  sub-solution of the same equation when the source term $s$ is
  replaced with $s \un_{f \ge 0}$.

  For $i=1,\frac{1}2$, consider $f_i = f \chi_i$ where $\chi_1$ and
  $\chi_{1/2}$ are two truncation functions such that
 \begin{align*}
  \chi_1 \equiv 1 \text{ in } \Qint  & \quad \text{ and } \quad 
 \chi_1 \equiv 0 \text{ outside } \Q12 , \\[2mm]
  \chi_{\frac12} \equiv 1 \text{ in } \Q12 & \quad \text{ and } \quad 
 \chi_{\frac12} \equiv 0 \text{ outside } \Qext \,.
   \end{align*}  
The function $f_1$ now satisfies 
\[   (\partial_t  + v \cdot \nabla_x) f_1 \le 
\nabla_v \cdot (A \nabla_v f_1) + \nabla_v \cdot H_1 +  H_0
\quad \text{ in }\quad  \R^{2d+1}\]
with $ H_1$ and $ H_0$ given by 
\[\begin{cases}
  H_1 &= (- A  \nabla_v \chi_1) f_\frac12  \\[2mm]
  H_0 &= (B\chi_1 -A \nabla_v \chi_1 ) \cdot \nabla_v f_\frac12 +
  \alpha_1 f_{\frac12} +s \un_{\{ f \ge 0\}}\chi_1
\end{cases}\]
with $\alpha_1 = (\partial_t + v \cdot \nabla_x ) \chi_1$. We remark
that $f_1$, $H_0$ and $H_1$ are supported in $\Qext$.

We now consider the solution $g$ of 
\[ (\partial_t + v \cdot \nabla_x) g = \nabla_v \cdot (A \nabla_v g) +
\nabla_v \cdot H_1 + H_0 \quad \text{ in }\quad \R^{2d+1}.\]
We remark that $g$ is also supported in $\Qext$, and since
$h:= f_1 -g$ is a sub-solution of the equation
$\partial_t h +v \cdot \nabla_x h \le \nabla_v ( A \nabla_v h)$ with
zero initial data at $t = -r_0^2$, the comparison principle implies
that $h \le 0$ everywhere, and therefore $0 \le f_1 \le g$. It can be
proved for instance by observing that $h_+$ is also a sub-solution of
the same inequation and the standard energy estimate implies that its
$L^2_{x,v}$-norm is non-increasing along the time variable.

Moreover,
\begin{align*}
\begin{cases}
\ds \|H_1\|_{L^2}^2 \lesssim &   \|\nabla_v \chi_1\|_{L^\infty}^2 \|f\|^2_{L^2(\Qext)}\\[3mm]
\ds \| H_0 \|_{L^2}^2 \lesssim &  \left( 1+ \|\nabla_v \chi_1\|_{L^\infty}^2 \right) \|\nabla_v f \|^2_{L^2(\Q12)}  + 
\|\alpha_1\|_{L^\infty}^2  \|f\|_{L^2(\Qext)}^2  + \| s \un_{\{ f \ge0\}}\|^2_{L^2(\Qext)}.
\end{cases}
\end{align*}
In view of Lemma~\ref{lem:caccio}, we know that
\[
 \|\nabla_v f \|^2_{L^2(\Q12)} \lesssim C_{0,1} \|f\|^2_{L^2(\Qext)} +  \| s \un_{\{ f \ge 0\}} \|^2_{L^2(\Qext)}.
\]
Hence,
\begin{multline*}
 \| H_0\|_{L^2}^2 + \|H_1\|_{L^2}^2 \lesssim   \Big[ ( 1+ \|\nabla_v \chi_1\|_{L^\infty}^2)(1+C_{0,1})
+ \|\alpha_1\|_{L^\infty}^2 \Big] \|f\|_{L^2(\Qext)}^2 \\
+ \left(2+\|\nabla_v \chi_1\|_{L^\infty}^2\right) \| s \un_{\{f\ge 0\}} \|^2_{L^1(\Qext)}.
\end{multline*}
In view of the definition of $C_{0,1}$ in Lemma~\ref{lem:caccio}, we thus get 
\[ \| H_0\|_{L^2}^2 + \|H_1\|_{L^2}^2 \lesssim C_{0,1}^2 \|f\|_{L^2(\Qext)}^2 
+ (r_0-r_1)^{-2} \| s \un_{\{f\ge 0\}} \|^2_{L^1(\Qext)}.\]
Lemma~\ref{lem:global} then yields 
\[  \n{g}_{L^p (\Qint)}^2 \le  \bar C \left( C_{0,1}^2  \n{f}_{L^2(\Qext)}^2 
 + C_{0,1} \int_{\Qext} |s|^2\un_{f\ge 0} \right).\]
We then obtain \eqref{eq:gain} by using the fact that $0 \le f_1 \le g$. 
This achieves the proof of Theorem~\ref{thm:gain}. 

As for Theorem~\ref{thm:gain-diff}, Lemma~\ref{lem:global} can be
applied directly to $f_1$ and the conclusion follows along the same
lines, with some simplifications.
\end{proof}

\section{Local upper bounds for non-negative sub-solutions}
\label{sec:up}

In this section, we prove that non-negative $L^2$ sub-solutions are in
fact locally bounded.
\begin{theorem}[Upper bounds for non-negative $L^2$
  sub-solutions] \label{thm:sup-sub} Given two cylinders
  $\Qext:=Q_{r_0}(z_0)$ and $Q_\infty:=Q_{r_\infty} (z_0)$ with
  $0<r_\infty<r_0$, let $f$ be a non-negative $L^2$ sub-solution of
  \eqref{eq:main} in $\Qext$ with $s \in L^q (\Qext)$ and
  $q>(2p)/(p-1)$ with $p$ only depending on dimension.  There for any
  $\mathfrak g >$, there exists
  $\kappa= \kappa(d,\lambda,\Lambda, \Qext,Q_{\infty},\mathfrak g,q)
  >0$ such that
  \[ 
  \left\{
    \begin{array}{r} 
      \|s\|_{L^q(\Qext)} \le \mathfrak g \\[2mm]
      \|f\|_{L^2(\Qext)} \le \kappa 
    \end{array} \right\}
  \qquad \Rightarrow \qquad f \le \frac12 \text{ in } Q_\infty.
  \]
\end{theorem}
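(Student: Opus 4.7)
The plan is a De~Giorgi iteration on nested cylinders and truncation levels, driven by the gain of integrability (Theorem~\ref{thm:gain}).

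\emph{Setup.} I set $r_k := r_\infty + 2^{-k}(r_0-r_\infty)$, $Q_k := Q_{r_k}(z_0)$ so that $Q_k \searrow Q_\infty$, truncation levels $\ell_k := \tfrac{1}{2}(1-2^{-k}) \nearrow \tfrac12$, and $f_k := (f-\ell_k)_+$. Since $r \mapsto (r-\ell_k)_+$ is convex and Lipschitz, each $f_k$ is a non-negative sub-solution of~\eqref{eq:main} on $\Qext$ with modified source $s\,\un_{f > \ell_k}$. The iterate is the energy quantity $U_k := \n{f_k}_{L^2(Q_k)}^2$; the goal is $U_k \to 0$, which is equivalent to $(f - 1/2)_+ \equiv 0$ on $Q_\infty$.

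\emph{Recursion.} Applying Theorem~\ref{thm:gain} to $f_k$ between $Q_k$ and $Q_{k+1}$, whose geometric constant $C_{k,k+1}$ satisfies $C_{k,k+1} \lesssim 4^k/(r_0-r_\infty)^2$, gives
\[
\n{f_k}_{L^p(Q_{k+1})}^2 \le A^k \, U_k \;+\; A^k \int_{Q_k} |s|^2 \un_{f > \ell_k},
\]
for a universal $A > 1$. Since $f_{k+1} > 0$ forces $f_k > \ell_{k+1}-\ell_k = 2^{-k-2}$, Chebyshev's inequality yields $|\{f > \ell_{k+1}\} \cap Q_{k+1}| \le 4^{k+2} U_k$, and Hölder interpolation in $(p/2, p/(p-2))$ between the $L^p$ bound and the indicator recovers
\[
U_{k+1} \le \n{f_k}_{L^p(Q_{k+1})}^2 \, \bigl|\{f > \ell_{k+1}\} \cap Q_{k+1}\bigr|^{1-2/p}.
\]
The source term is treated by Hölder in $(q/2, q/(q-2))$ together with the analogous one-level-back Chebyshev bound $|\{f > \ell_k\} \cap Q_k| \le 4^{k+1} U_{k-1}$, which gives $\int_{Q_k}|s|^2 \un_{f > \ell_k} \le \mathfrak g^2 (4^{k+1} U_{k-1})^{1-2/q}$. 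Collecting these estimates produces a two-step nonlinear recurrence of the schematic form
\[
U_{k+1} \le B^k \bigl( U_k^{1+\beta_1} \;+\; \mathfrak g^2 \, U_{k-1}^{\beta_2} \, U_k^{\beta_3} \bigr),
\]
with universal $B > 1$, $\beta_1 = (p-2)/p > 0$, and $\beta_2 + \beta_3 > 1$; the integrability threshold $q > 2p/(p-1)$ is imposed precisely to force this last strict inequality, and hence to make the source contribution strictly super-linear in the iterates.

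\emph{Conclusion and main obstacle.} A standard two-step De~Giorgi iteration lemma then converts the recursion into $U_k \to 0$ provided $U_0 \le \kappa^2$ with $\kappa = \kappa(d,\lambda,\Lambda,\Qext,Q_\infty,\mathfrak g,q)$ small enough, which is the claim. The main delicate point is the simultaneous bookkeeping of the three interpolations (two Chebyshev measure bounds at consecutive levels, the $L^p$-vs-indicator Hölder for the iterate, and the $L^q$-vs-indicator Hölder for the source), so that every term in the recurrence is strictly super-linear in the $U$'s. A useful simplification that avoids a genuinely two-step argument is to rescale $f \mapsto f/(\kappa + c_q \mathfrak g)$, absorbing the source amplitude $\mathfrak g$ into the smallness of the initial $L^2$ quantity, which reduces the scheme to a clean one-step recurrence $U_{k+1} \le B^k U_k^{1+\beta}$ of the classical De~Giorgi--Moser type.
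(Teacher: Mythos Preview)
Your approach is essentially the paper's De~Giorgi proof (Section~\ref{sec:deGup}): the same nested cylinders, the same truncation levels $\ell_k=\tfrac12(1-2^{-k})$, and the same three ingredients (Theorem~\ref{thm:gain}, Chebyshev on the level sets, H\"older for the source). The only cosmetic difference is that you take $U_k=\|f_k\|_{L^2(Q_k)}^2$ as the iterate and apply the gain of integrability directly, whereas the paper first passes through the local energy estimate and uses $\sup_t\int f_n^2$; both routes yield the same super-linear recursion.

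One arithmetic slip: in your source branch the total exponent is $\beta_2+\beta_3=(1-\tfrac2q)+(1-\tfrac2p)$, and $\beta_2+\beta_3>1$ is equivalent to $q>2p/(p-2)$, not $q>2p/(p-1)$. (The paper's own recursion in fact requires the still stronger $q>4p/(p-2)$, so the threshold written in the theorem statement is looser than what either proof actually delivers.) This does not affect the structure of your argument.
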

\begin{remark}
  The exponent $p = 6(2d+1)/ (6d+1)$ is the one given by the gain of
  integrability in Theorem~\ref{thm:gain} (see Remark~\ref{rem:p}).
\end{remark}
We give two proofs of such a result. The first one sticks to the case
$q=+\infty$ with no lower order terms and use Moser's approach. The
second one deals with the general case and use De~Giorgi's approach.

\subsection{Moser's approach}

\begin{proof}[Proof of Theorem~\ref{thm:sup-sub} in the case without
  source term by Moser's iteration.]
  Using tranformations introduced in Eq.~\eqref{eq:transformation}, we
  reduce to the case $z_0=0$.

We first observe that, for all $q>1$,
  the function $f^q$ satisfies
\[ (\partial_t + v\nabla_x) f^q \le \nabla_v \cdot (A \nabla_v f^q) \quad
\text{ in } Q_{r_0} .\]

We now rewrite \eqref{eq:gain} with $s=0$ from $Q_{r_n}$ to
$Q_{r_{n+1}}$ with $r_{n+1} < r_n$ as follows:
\begin{equation}\label{eq:gain-again}
  \left( \int_{Q_{r_{n+1}}(0)} (f^q)^p \right)^{\frac2p} \le \bar C 
  C_n^2  \int_{Q_{r_n}(0)} f^{2q}  
\end{equation}
where $\bar C = \bar C (d,\lambda,\Lambda)$ and
\begin{equation}\label{eq:cq}
 C_n =  \left(
   \frac1{r_n^2-r_{n+1}^2}+\frac{r_n}{r_n^3-r_{n+1}^3}+\frac1{(r_n-r_{n+1})^2}
 \right) + \|B\|_{L^\infty}+1. 
\end{equation}
Choose now $q = q_n = (p/2)^n$ for $n \in \N$ 
and write  
$a_n$ for
$(\int_{Q_n} f^{2q_n})^{1/(2q_n)}$.  Using that for
$\bar C=\bar C (d,\lambda,\Lambda,\Qext)\ge 1$ large enough, we have
$|\Qext| \le \bar C$, we get from \eqref{eq:gain-again}
\begin{equation}\label{eq:an}
a_{n+1}  \le (\bar C)^{\frac1{2q_n}}  \left( C_n \right)^{\frac1{q_n}} a_n.
\end{equation}
Finally we choose 
\[r_{n+1} = r_n - \frac{1}{a(n+1)^2}\]
for some $a>0$ (only depending on $r_0-r_\infty$) so that
\eqref{eq:cq} yields $C_n \sim a^2 n^4$ as $n \to +\infty$. In
particular, we can choose
$\bar C = \bar C (d,\lambda,\Lambda,\|B\|_{L^\infty})$ large enough so
that $C_n \le \bar C^{\frac12} a^2 n^4$ and we get from
\eqref{eq:an} that
\[ a_{n+1} \le (\bar C a^2 n^4)^{\frac1{q_n}} a_n.\]
The convergence of the following infinite product
\[ \prod_{n =0}^\infty  (\bar C a^2)^{\frac1{q_n}} (n^4)^{\frac1{q_n}} < + \infty\]
achieves the proof. 
\end{proof}

\subsection{De~Giorgi's approach}
\label{sec:deGup}

\begin{proof}[Proof of Theorem~\ref{thm:sup-sub} by De~Giorgi's
  approach.]
  We again reduce to the case $z_0=0$ thanks to the transformation
  $\mathcal{T}_{z_0}^{-1}$ defined in Eq.~\eqref{eq:transformation}.
  For $n \ge 0$ integer, consider radius $r_n$, time $T_n$, cylinder
  $Q_n$ and constant $C_n$ as follows
\[ r_n =  r_\infty + (r_0-r_\infty)2^{-n}, \quad  T_n = t_0-r_n^2,
\quad C_n = \frac12 (1-2^{-n}), \]
and cut-off functions $\Psi_n$ (independent of time) as follows
\[ \Psi_n \equiv \begin{cases} 
  1 & \text{ in } Q_{r_n}^0 \\[3mm] 
  0 & \text{ outside } Q_{r_{n-1}}^0 
\end{cases} 
\quad \text{ and } \quad \begin{cases} 
\| \ds \nabla_v \Psi_n \|_{L^\infty} \le \frac{1}{r_{n-1}-r_n} \le C_{0,\infty}  2^n \\[3mm]
\| \ds \nabla_x \Psi_k \|_{L^\infty} \le \frac{1}{r_{n-1}^3 - r_n^3} \le C_{0,\infty}  2^n
\end{cases}
\]
where $C_{0,\infty} = C(r_0,r_\infty)$ only depends on $r_0$ and
$r_\infty$, and as before
\[
Q^\tau_r := \{ (x,v) : (x,v,\tau) \in Q_{r}\} .
\]
\smallskip

\noindent \textbf{The energy estimate.}  Remark that $f_n = (f-C_n)^+$
is a sub-solution of \eqref{eq:main} in $Q_{r_n}$ with
$s_n = s \un_{f\ge C_n}$.  Then the energy
estimate~\eqref{eq:nrj-estimate} obtained in the proof of
Lemma~\ref{lem:caccio} yields for all
$T_{n-1} \le \tau \le T_n \le t \le 0$,
\begin{equation} \label{eq:nrj-k-1} 
  \int_{Q_{r_n}^{t}} f_n^2 
  + \lambda \int_{Q_{r_n}} |\nabla_v f_n|^2
  \le \int_{Q_{r_n}^{\tau}} f_n^2   + \left(r_n \n{\nabla_x
      \Psi_n}_{\infty} + \n{\nabla_v \Psi_n}_{\infty}^2 + 1 \right)
  \int_{Q_{r_{n-1}}} f_n^2 + 2\int_{Q_{r_{n-1}}} f_n |s|.
\end{equation}
Averaging both sides of the inequality in $\tau \in (T_{n-1},T_n)$
and using the estimates on the gradients of the cut-off function yields
\begin{equation} 
  \label{eq:nrj-k-2} 
  U_n := \sup_{ t \in (T_n,0)}
  \int_{Q_{r_n}^{t}} f_n^2 \le C 4^n \int_{Q_{r_{n-1}}} f_n^2 +
  2\int_{Q_{r_{n-1}}} f_n |s|
\end{equation}
where $C = C (r_0,r_\infty)$. 
Remark that, 
\begin{equation}\label{eq:uk}
U_n \le U_{n-1} \le \dots \le U_0 \le \kappa  \le 1
\end{equation}
(we choose $\kappa \le 1$). 
\medskip 

\noindent \textbf{The non-linearization procedure.}  Using the
(universal) exponent $p>2$ given by Theorem~\ref{thm:gain}, we next
estimate the terms in the right hand side of \eqref{eq:nrj-k-2} as
follows
\begin{align}\label{eq:source}
\begin{cases}
\ds \int_{Q_{r_{n-1}}} f_n^2 &\le \left( \int_{Q_{r_{n-1}}} f_n^p \right)^{\frac2p} |\{ f_n \ge 0 \} \cap Q_{r_{n-1}}|^{1 -\frac2p} \\[3mm]
\ds \int_{Q_{r_{n-1}}} f_n |s| &\le \mathfrak g \left( \int_{Q_{r_{n-1}}} f_n^p
\right)^{\frac1p} |\{ f_n \ge 0 \} \cap Q_{r_{n-1}}|^{1 -\frac1p-\frac1q} 
\end{cases}
\end{align}
(we used that $\|s\|_{L^q(\Qext)} \le \gamma$) 
if $p$ and $q$ satisfy 
\[1 - \frac1p - \frac1q >0.\] 
We next remark that $\{ f_n \ge 0 \} = \{f_{n-1} \ge C_n-C_{n-1} = 2^{-k-1}\}$ which in turn implies
\begin{equation}\label{eq:ls}
|\{ f_n \ge 0 \} \cap Q_{r_{n-1}}|  \le 2^{2n+2} \int_{Q_{r_{n-1}}} f_{n-1}^2   \le \bar C 4^n U_{n-1}. 
\end{equation}
Combining these three estimates with \eqref{eq:nrj-k-2} yields 
\begin{equation}\label{eq:nrj-2}
 U_n \le  C 2^{4n} \left[ \left( \int_{Q_{r_{n-1}}} f_{n-1}^p \right)^{\frac2p} U_{n-1}^{1-\frac2p} + 
\|s\|_{L^q(\Qext)} \left( \int_{Q_{r_{n-1}}} f_{n-1}^p \right)^{\frac1p} U_{n-1}^{1-\frac1p-\frac1q} \right] 
\end{equation}
(we also used that $f_n \le f_{n-1}$) where $C = C(d,\lambda,\Lambda, r_0,r_\infty)$. 
\medskip 

\noindent \textbf{Use of the gain of integrability.} 
In view of Theorem~\ref{thm:gain}, we know that 
\[
\left( \int_{Q_{r_{n-1}}} f_{n-1}^p \right)^{\frac2p} \le C \left( 8^n
  \int_{Q_{r_{n-2}}} f_{n-1}^2 +4^n \int_{Q_{r_{n-2}}}
  s^2\un_{f_{n-1}>0} \right)
\]
with $C= C(d,\lambda,\Lambda,r_0,r_\infty)$.  We next estimate the
terms in the right hand side of the previous equation depending of the
source term as in \eqref{eq:source} but with $p=2$: we use
\eqref{eq:ls} to get
\begin{align*}
  \int_{Q_{r_{n-2}}} s^2 \un_{f_{n-1} \ge 0} 
  \le  \mathfrak g^2 |\{ f_{n-1} > 0 \} \cap Q_{r_{n-2}} |^{1-\frac{2}q} 
  \le    \mathfrak g^2 2^{2n-\frac{4n}q} U_{n-2}^{1-\frac2q}.
\end{align*}
Hence, we can use \eqref{eq:uk} and $U_0 \le 1$ again in order to write
\begin{align*}
  \left( \int_{Q_{r_{n-1}}} f_{n-1}^p \right)^{\frac2p}  \le C
  \left(2^{3n} U_{n-2} 
  + 2^{4n-\frac{4n}q} U_{n-2}^{1-\frac2q} \right) \le C 2^{4n} U_{n-2}^{1-\frac2q}
\end{align*}
with $C= C(d,\lambda,\Lambda,r_0,r_\infty,q,\mathfrak g)$.  Then
\eqref{eq:nrj-2} and \eqref{eq:uk} imply
\[ 
U_n \le C 2^{4n} \left( 2^{4n} U_{n-2}^{2 - \frac2p -\frac2q} +
U_{n-2}^{\frac32 - \frac1p -\frac2{q}}\right) \le C 2^{8n} U_{n-2}^{\frac32
  - \frac1p -\frac2q}.
\]
\medskip

\noindent \textbf{Conclusion.}
Remark that we can assume that $C \ge 1$. We rewrite it as 
\begin{equation}\label{eq:nl}
 V_n \le \beta^n V_{n-1}^\alpha 
\end{equation}
where $V_n = U_{2n}$, $\beta =  2^8C$ and $\alpha = \frac32 - \frac1p -\frac2q$. 
Remark that $\alpha >1$ as soon as 
\[ \frac1q < \frac12 \left(\frac12- \frac1p \right).\]

Applying \eqref{eq:nl} recursively, we get 
\[ V_n \le \beta^{k+ \alpha (k-1)+ \alpha^2 (k-2) + \dots + \alpha^{k-1}} V_0^{\alpha^k}.\]
Remark now that 
\begin{align*}
 n + \alpha (n-1) + \dots + \alpha^{n-1} 
& = n(1+\alpha+\dots + \alpha^{n-1}) - \alpha (1+2\alpha + \dots + (n-1)\alpha^{n-2}) \\
& = n\frac{\alpha^n-1}{\alpha-1} - \alpha \frac{{\rm d}}{{\rm d}\alpha}\left( \frac{\alpha^n-1}{\alpha-1} \right)\\
& = \frac{n(\alpha^n-1)}{\alpha-1} - \alpha \left( \frac{n\alpha^{n-1}(\alpha-1) -(\alpha^n-1)}{(\alpha-1)^2}\right)\\
& = \frac{\alpha(\alpha^n-1) -n(\alpha-1)}{(\alpha-1)^2} \le \frac{\alpha}{(\alpha-1)^2} \alpha^n. 
\end{align*}
Hence
\[ V_n \le \left( \beta^{\frac\alpha{(\alpha-1)^2}} V_0 \right)^{\alpha^n}.\]
This implies that $U_{2n} = V_n \to 0$ as $n \to +\infty$ as soon as 
\[ \beta^{\frac\alpha{(\alpha-1)^2}} V_0 \le  (2^8 C)^{\frac\alpha{(\alpha-1)^2}} \kappa <1 \]
where $C = C(d,\lambda,\Lambda,r_0,r_\infty,q,\mathfrak g)$. Hence,
\[ U_\infty = \int_{Q_{r_\infty}} \left(f-\frac12\right)_+^2 = 0 \]
which means that $f \le 1/2$ in $Q_{r_\infty}$. This completes the proof
of Theorem~\ref{thm:sup-sub}.
\end{proof}

\section{Intermediate-value lemma and H\"older continuity}
\label{sec:osc-decrease}

\subsection{A De~Giorgi intermediate-value lemma}

An important step in the proof of regularity in De~Giorgi's method for
elliptic equations is based on an inequality of isoperimetric form
(see the proof of \cite[Lemma~II]{DeG}). This inequality is a
quantitative variant of the well-known fact that no $H^1$ function can
have a jump discontinuity, and can also be understood as a
quantitative minimum principle. More precisely, given an $H^1$
function $u$ valued in $[0,1]$ and which takes the values $0$ and $1$
on sets of positive measure, De~Giorgi's isoperimetric inequality
provides a lower bound on the measure of the set of intermediate
values $\{ 0<u<1 \}$. In the present subsection, we establish an
analogue of this inequality adapted to our equation and the
combination of the first order transport operator and the second order
elliptic operator in the velocity variable. 

We prove the core lemma at ``unit scale''.  We recall that
$Q_2= B_8 \times B_2 \times (-4,0]$ and
$Q_1 = B_1 \times B_1 \times (-1,0]$,
$Q_\omega = B_{\omega^3} \times B_\omega \times (-\omega^2,0]$ and we
denote the shifted cube
$\hat Q := Q_\omega (0,0,-1)=B_{\omega^3} \times B_\omega \times
(-1-\omega^2,-1]$ (see Figure~\ref{fig:isoperim}). 

\begin{figure}
\includegraphics[height=6cm]{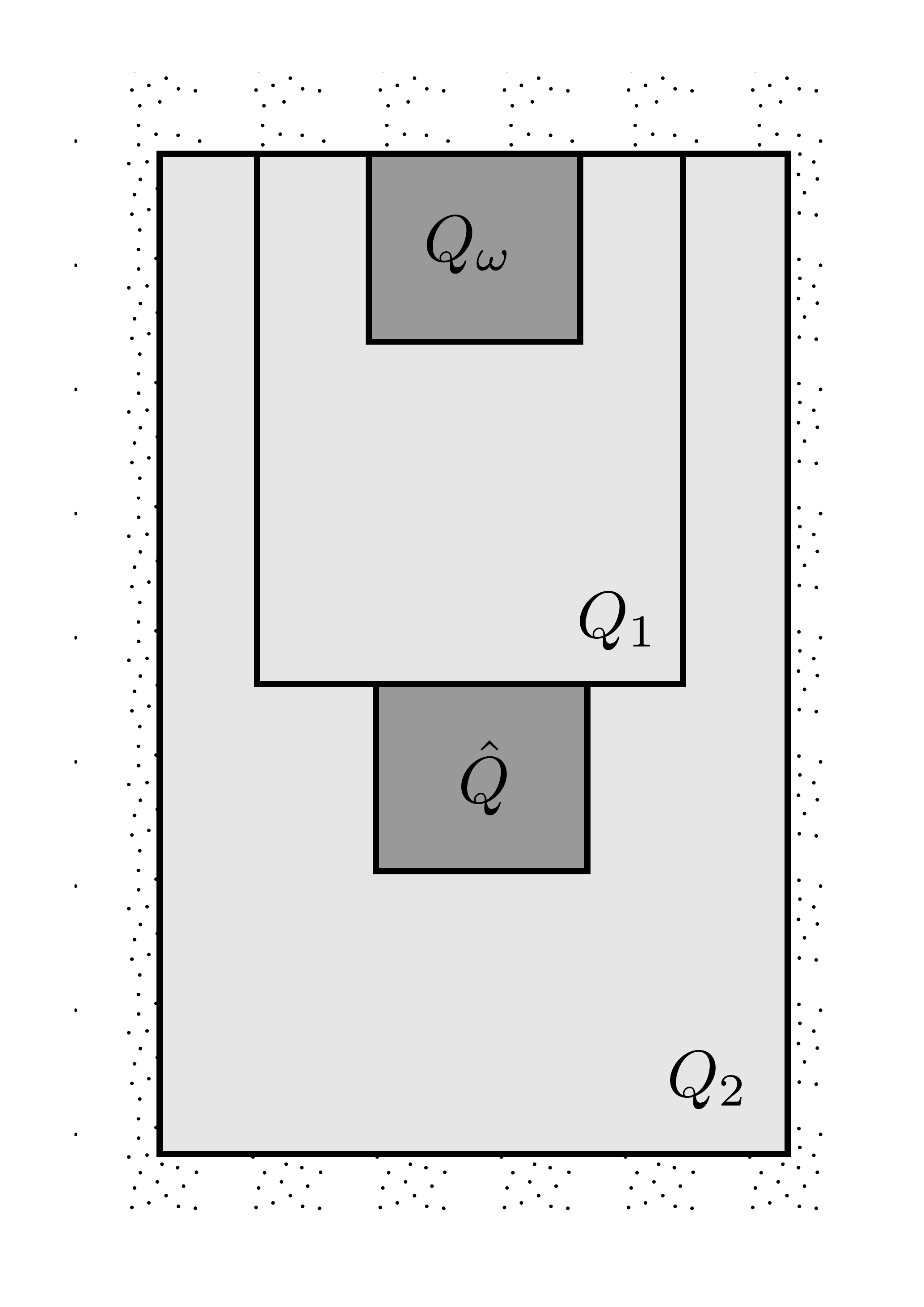}
\caption{Cylinders involved in the statement of the De~Giorgi
  intermediate-value Lemma.}
\label{fig:isoperim}
\end{figure}
\begin{lemma}[A De~Giorgi intermediate-value lemma]\label{lem:isoperim}
  Let $\omega =\frac14$.  For any (universal)
  constants $\delta_1 \in (0,1)$, $\delta_2 \in (0,1)$ there exist
  $\nu>0$ and $\theta \in (0,1)$ (both universal) such that for any
  sub-solution $f$ of \eqref{eq:main} in $Q_2$ with
\[ f \le 1 \quad \text{ and } \quad |s|\le 1\]
  and
\[\begin{aligned}
|\{ f \ge 1-\theta \} \cap Q_\omega | &\ge  \delta_1 |Q_\omega|\\
|\{ f \le 0 \} \cap \hat Q | & \ge \delta_2 |\hat Q|
\end{aligned}
\]
we have
\[ |\{ 0 < f < 1-\theta\} \cap B_1 \times B_1 \times (-2,0]| \ge \nu.\]
\end{lemma}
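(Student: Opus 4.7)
The plan is to argue by contradiction: assume $|\{0<f<1-\theta\}\cap B_1\times B_1\times(-2,0]| < \nu$ and derive a contradiction for $\nu$ and $\theta$ sufficiently small (in terms of $\delta_1,\delta_2$ only). First I would apply the local energy estimate (Lemma~\ref{lem:caccio}) on an intermediate cylinder containing $Q_1\cup Q_\omega\cup \hat Q$ and strictly inside $Q_2$, to the non-negative sub-solution $(f)_+$ and, more importantly, to the two-level truncation $g:=\min\bigl(((f)_+-(1-2\theta))_+/\theta,\,1\bigr)$, which is a Lipschitz composition of a sub-solution and itself satisfies a Caccioppoli-type inequality up to a modified bounded source. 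This yields a universal $L^2$ bound on $\nabla_v g$ on, say, $B_1\times B_1\times(-2,0]$.

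Next, I would exploit this $H^1_v$-control slice by slice via the classical one-variable De~Giorgi isoperimetric lemma: for almost every $(x,t)\in B_1\times(-2,0]$, if $v\mapsto g(x,v,t)$ has substantial mass in both $\{g=0\}$ and $\{g=1\}$ inside $B_1$, then $|\{0<g<1\}\cap(\{x\}\times B_1\times\{t\})|$ is bounded below by a constant depending on those masses divided by $\|\nabla_v g(x,\cdot,t)\|_{L^2(B_1)}^2$. Integrating in $(x,t)$ and using the Cauchy–Schwarz/Jensen trick together with the $L^2$-bound on $\nabla_v g$, one converts ``a set of bi-colored slices of positive measure'' into the desired lower bound $|\{0<f<1-\theta\}|\ge \nu_0$ for some universal $\nu_0$. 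So the entire game reduces to producing bi-colored slices.

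This is the main obstacle, and really the heart of the lemma: the hypotheses place the mass $\{f\ge 1-\theta\}$ near the top $t\simeq 0$ (inside $Q_\omega$) and the mass $\{f\le 0\}$ near the bottom $t\simeq -1$ (inside $\hat Q$), so these two level sets must be connected in time using the transport operator. I would test the inequation for $g$ against suitable time-indicator test functions and propagate along the free-transport characteristics $(x,v,t)\mapsto(x+(t-s)v,\,v,\,s)$: the first-order part $\partial_t+v\cdot\nabla_x$ carries pointwise information forward in time on each characteristic, while the contributions of $\nabla_v\cdot(A\nabla_v g)$, $B\cdot\nabla_v g$ and $s\,\un_{f>1-2\theta}$ are supported (up to the bounded source) in the set $\{0<g<1\}$, which by the contradiction hypothesis has measure $<\nu$. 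The geometric choice $\omega=\tfrac14$ and the slanting of the cylinders $\hat Q$, $Q_\omega$, $Q_1$ is precisely what ensures that characteristics issuing from $\hat Q$ with velocities $v\in B_\omega$ over the time interval $[-1-\omega^2,\,0]$ stay in $B_1\times B_1\times(-2,0]$, and conversely for backward characteristics from $Q_\omega$; hence the two propagated sets both live in $B_1\times B_1\times(-2,0]$, overlap on a set of $(x,t)$-measure $\gtrsim \delta_1\delta_2$, and on that overlap the function $v\mapsto g(x,v,t)$ is bi-colored.

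Quantifying this propagation is the delicate step: one needs a measure-theoretic version of the characteristic argument in the presence of merely $L^\infty$ coefficients and of the diffusion term. Once it is in hand, one chooses $\theta$ small enough that the perturbative errors from the bounded source $s$, the drift $B\cdot\nabla_v f$, and the ellipticity of $A$ on the set $\{0<g<1\}$ (of measure $<\nu$) remain below $\tfrac12$ of the gap between the levels, and then $\nu<\nu_0$, contradicting the assumption and completing the proof.
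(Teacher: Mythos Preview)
Your outline is a genuine attempt at a \emph{quantitative} argument, whereas the paper proceeds by compactness. It takes a sequence of sub-solutions $f_k$ with $\theta_k\to 0$ and intermediate-set measure $\to 0$, uses the energy estimate together with velocity averaging to obtain \emph{strong} $L^p_{\mathrm{loc}}$ convergence of $f_k^+$ to some $F$, and observes that $F$ is $\{0,1\}$-valued with $\nabla_v F\in L^2$; hence $F=\un_P(x,t)$ is \emph{independent of $v$}. That last point is the crux: testing the limit equation against a bump $\zeta(v-v_0)$ then yields the genuine first-order relation $(\partial_t+v_0\cdot\nabla_x)F\le 0$ for \emph{every} $v_0\in B_{1/2}$, because the diffusion term vanishes on the $v$-constant limit. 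The geometric fact you correctly identified about $\omega=\tfrac14$ then gives $F\equiv 0$ on $Q_\omega$, a contradiction. The trade-off is that the constants $\nu,\theta$ are non-effective, as the paper itself remarks.

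Your direct approach is natural but the propagation step, which you flag as ``delicate,'' is genuinely incomplete and is exactly the step that the paper's compactness argument circumvents. Two concrete obstacles: first, your two-level truncation $g=\min(((f)_+-(1-2\theta))_+/\theta,1)$ is a \emph{concave} composition at the top corner, so it is not a sub-solution and the Caccioppoli inequality does not follow by the standard convexity trick (the corner at $f=1-\theta$ produces a term with the wrong sign). Second, even granting an energy bound $\int|\nabla_v g|^2\lesssim C(\theta)$, the claim that the diffusion contribution is ``supported in $\{0<g<1\}$'' is only true for $\nabla_v g$; when you average in $v$ (which you must, since propagation along single characteristics is meaningless for a second-order equation), the error is $\int A\nabla_v g\cdot\nabla_v\zeta$, controlled by $\|\nabla_v g\|_{L^2}\,|\{0<g<1\}|^{1/2}\lesssim C(\theta)^{1/2}\nu^{1/2}$ with $C(\theta)\to\infty$ as $\theta\to 0$. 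Likewise the rescaled source is $s\un_{\{f>1-2\theta\}}/\theta$, supported on a set typically much larger than $\{0<g<1\}$ and of size $O(1/\theta)$. It is not clear your loop in $(\theta,\nu)$ closes; at minimum, the missing ingredient is a mechanism that forces $g$ to be nearly $v$-independent before you attempt transport, which is precisely what the passage to the limit provides in the paper's proof.
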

\begin{remark}
  While De~Giorgi's isoperimetric inequality is based on an explicit
  computation leading to a precise estimate with effective constants,
  the proof of Lemma~\ref{lem:isoperim} is obtained by an argument by
  contradiction, so that the values of $\theta$ and
  $\nu$ are not known explicitly.
\end{remark}
\begin{remark}
  The compactness argument used in the proof is reminiscent of one
  used by Guo in \cite{guo} and of one used by the fourth author in
  \cite{vasseur}.
\end{remark}
\begin{proof}
  We argue by contradiction by assuming that there exists a sequence
  $(f_k)_{k \ge 0}$ of sub-solutions:
\begin{equation}\label{eq:fk}
(\partial_t + v\cdot \nabla_x) f_k \le \nabla_v \cdot (A_k \nabla_v f_k) + B_k \cdot \nabla_v f_k + s_k
\end{equation}
such that $f_k \le 1$ and $|s_k| \le 1$ and 
\[ \begin{cases} \theta_k \to 0 \\[1mm] \alpha_k \to 0 \end{cases} \quad \text{ as } \quad k \to +\infty \]
and
\[
\begin{aligned}
|\{ f_k \ge 1 -\theta_k \} \cap Q_\omega | &\ge  \delta_1 |Q_\omega| \\
|\{ f_k \le 0 \} \cap \hat Q | & \ge \delta_2 |\hat Q|\\
|\{ 0 < f_k < 1 -\theta_k\} \cap (Q_1 \cup \hat Q)| & \to 0 \quad \text{ as } \quad k \to +\infty. 
\end{aligned}
\]
The convexity of $z \mapsto z^+$ together with $|s_k|\le 1$ implies
that the non-negative part $f_k^+$ of $f_k$ satisfies the same
inequation, and therefore
\begin{equation}\label{eq:fk+}
(\partial_t + v\cdot \nabla_x) f_k^+ = \nabla_v \cdot (A_k \nabla_v f_k^+)+ B_k \cdot \nabla_v f_k^+ + 1 - \mu_k
\end{equation}
for some non-negative measures $\mu_k$. 
\smallskip

\noindent
\textbf{A priori estimates for $f_k^+$.}  The natural energy estimate
is obtained by multiplying the equation with $f_k^+ \Psi^2$ with a
smooth cut-off function $\Psi$ supported in $Q_2$ and valued in
$[0,1]$, and using the fact that $f_k^+ \le 1$ and $|s_k|\le 1$:
\begin{align*}
 \lambda \int_{\R^{2d+1}} | \nabla_v f_k^+ |^2  \Psi^2 &\le 
\bar C \int_{\R^{2d+1}} \left(\Psi^2 + |\nabla_v \Psi|^2 + \Psi|(\partial_t + v\cdot \nabla_x)\Psi|\right) 
+ \Lambda \int_{\R^{2d+1}} |\nabla_v f_k^+| f_k^+ \Psi^2  \\
& \le \bar C \int_{\R^{2d+1}} \left(\Psi^2 + |\nabla_v \Psi|^2 + \Psi|(\partial_t + v\cdot \nabla_x)\Psi|\right) 
+ \frac{\lambda}2 \int_{\R^{2d+1}} |\nabla_v f_k^+|^2  \Psi^2.  
\end{align*}
Hence
\begin{equation}\label{estim:1}
 \lambda \int_{\R^{2d+1}} | \nabla_v f_k^+ |^2  \Psi^2 \le 
\bar C \int_{\R^{2d+1}} \left(\Psi^2 + |\nabla_v \Psi|^2 + \Psi|(\partial_t + v\cdot \nabla_x)\Psi|\right) 
\end{equation}
where $\bar C=\bar C (d,\lambda,\Lambda)$. 

We can also multiply the equation by $\Psi^2$ and get
\begin{multline*}
-  \int_{\R^{2d+1}} f_k^+  (\partial_t + v \cdot \nabla_x) (\Psi^2) = - 
 \int_{\R^{2d+1}} A_k \nabla_v f_k^+  \cdot \nabla_v (\Psi^2) + 
\int_{\R^{2d+1}} B_k \cdot \nabla_v f_k^+ \Psi^2 \\ +  \int_{\R^{2d+1}}  \Psi^2
- \int_{\R^{2d+1}} \Psi^2 \dd \mu_k . 
\end{multline*}
Combining the latter equation with \eqref{estim:1}, we deduce
\begin{equation}\label{estim:2}
  \int_{\R^{2d+1}} \Psi^2 \dd \mu_k  \le \bar C \int_{\R^{2d+1}} 
  \left(\Psi^2 + |\nabla_v \Psi|^2 + \Psi|(\partial_t + v\cdot \nabla_x)\Psi|\right)
\end{equation}
where  $\bar C=\bar C (d,\lambda,\Lambda)$. 
\medskip

\noindent \textbf{Passage to the limit.}  On the one hand,
Banach-Alaoglu theorem implies that 
\[ f_k^+ \stackrel{\ast}{\rightharpoonup} F  \text{ in } L^\infty_{\text{loc}} (Q_2) \]
and 
\begin{equation}\label{conv:grad}
 \nabla_v f_k^+ \rightharpoonup \nabla_v F \quad \text{ and } \quad
\begin{cases}
 A_k \nabla_v f_k^+  \rightharpoonup H_1 \\[2mm]
B_k \cdot \nabla_v f_k^+ \rightharpoonup H_0 
\end{cases}
\text{ in } L^2_{\text{loc}} (Q_2) 
\end{equation}
for some weak limit
$F \in L^\infty_{\text{loc}} (Q_2) \cap
(L^2_{x,t}H^1_v)_{\text{loc}}(Q_2)$.
In particular, \eqref{estim:1} implies that
\begin{equation}\label{eq:H1}
 \int_Q |\nabla_v F|^2 \lesssim_Q 1 
\end{equation}
for all $Q \Subset Q_2$, with a control depending on $Q$.  On the
other hand, the bound \eqref{estim:2} implies that
\[ \mu_k \rightharpoonup \mu \quad \text{ in } \mathcal{M} (Q_2).\]
We thus have
\begin{eqnarray}
\label{eq:F}
 (\partial_t + v \cdot \nabla_x)  F =   \nabla_v H_1 + H_0 + 1 - \mu.
\end{eqnarray}

By velocity averaging (see Theorem 1.8 in \cite{BGP}), together with
the bound \eqref{estim:1}, we deduce the strong convergence
\[ f_k^+ \to F \text{ in } L^p_{\text{loc}}(Q_2) \quad \text{
  for } 1 \le p < +\infty.\] 
It implies the convergence in probability and thus the
function $F$ satisfies 
\begin{align}
\label{eq:P1} |\{ F = 1 \} \cap Q_\omega | &\ge  \delta_1 |Q_\omega|\\
\label{eq:P2} |\{ F = 0 \} \cap \hat Q | & \ge \delta_2 |\hat Q| \\
\nonumber \left|\{ 0 < F < 1\} \cap \left( B_1 \times B_1 \times
  (-2,0] \right) \right| & = 0 \,.
\end{align}
In view of \eqref{eq:H1}, since indicator functions are not in $H^1$
unless they are constant, we have that for almost every
$(x,t) \in B_1 \times (-1,0)$,
\[ \left\{
\begin{aligned}
  \text{either } \quad  &  \mbox{for almost every }  v \in B_1, 
  \quad F(x,v,t) = 0 \\[1mm]
  \text{ or } \quad \quad & \mbox{for almost every } v\in B_1, 
  \quad F(x,v,t) = 1.
\end{aligned}
\right.
\]
In other words, $F (x,v,t) = \un_P (x,t)$ for some measurable set $P
\subset B_1 \times (-1,0)$.  In view of \eqref{eq:P1} and
\eqref{eq:P2}, $P$ satisfies
\begin{equation}
\label{eq:P3}
\begin{cases}
 |P \cap B_{\omega^3} \times (-\omega^2,0)| >0 \\[2mm]
 |B_{\omega^3} \times (-1-\omega^2,-1) \setminus P| >0.
\end{cases}
\end{equation}
\smallskip

\noindent \textbf{Propagation.} 
We thus get from \eqref{eq:F}
\[ \partial_t F + v \cdot \nabla_x F \le \nabla_v H_1 + H_0 + 1 \quad
\text{ in } B_1 \times B_1 \times (-2,0).\]
Consider a cut-off funtion $\xi \in \mathcal{D} (\R^d)$ such that
\[ 
\int_{\R^d} \zeta(z) \dd z = 1, \quad \zeta (z) = \zeta (-z), \quad \supp \zeta
\subset B_{\frac12}. 
\] 
Given $v_0 \in B_{\frac12}$, since $F$ only depends on $(t,x)$, we can use a
test-function of the form $\zeta(v-v_0)$, and get for all $v_0 \in B_{\frac12}$, 
\begin{align*}
 \partial_t F + v_0 \cdot \nabla_x F 
\le \int_{\R^d} \Big[ \left|H_1 (x,v,t) \nabla_v \zeta(v-v_0)\right| 
+ \left|H_0(x,v,t) \zeta (v-v_0)\right| \Big] \dd v +  1 
\end{align*}
in $(x,t) \in B_1 \times (-2,0)$.  Since $F$ is an indicator function
and $H_0,H_1 \in L^2_{\text{loc}} (Q_2)$, this implies for
$v_0 \in B_{\frac12}$,
\begin{equation}\label{eq:propagation}
 \partial_t F + v_0 \cdot \nabla_x F \le 0 \quad \text{ in } B_1
 \times (-2,0) .
\end{equation} 
We next remark that 
\begin{equation}
\label{eq:connection}
\begin{cases}
\text{ for all } (x,t) \in B_{\omega^3} \times
(-\omega^2,0) \quad \text{ and } \quad (x_0,t_0) \in
  B_{\omega^3} \times (-1-\omega^2,-1), \\[2mm]
\text{ there exists } v_0 \in B_{\omega} \mbox{
    so that } (x_0,v_0,t_0) \in \hat Q   \text{
  and }  (x,t) = (x_0 + s v_0,t_0+s).
\end{cases}
\end{equation}
Indeed, the time shift $s$ is fixed by $t=t_0+s$ and belongs to
$(1-\omega^2,1+\omega^2)$. Then the velocity $v_0$ is fixed by $x = x_0 + s
v_0$ and  satisfies 
\[ |v_0| = \frac{|x-x_0|}{t-t_0} < \frac{2\omega^3}{1-\omega^2} \le
\omega\]
since $\omega =\frac14 \le \frac1{\sqrt{3}}$.  Since
$|B_1 \times (-1-\omega^2,-1) \setminus P| >0$ (see \eqref{eq:P3}), we
can use \eqref{eq:propagation} and \eqref{eq:connection} and conclude
that $F \equiv 0$ in $Q_\omega$, and contradicts \eqref{eq:P3}.  The
proof is complete.
\end{proof}

\subsection{Improvement of oscillation}

It is classical that H\"older continuity is a consequence of the
decrease of the oscillation of the solution ``at unit scale''. 
\begin{lemma}[Improvement of oscillation]\label{lem:osc-decrease}
  There exist $\lambda_0 \in (0,1)$, $\omega \in (0,1/2)$ and
  $\beta >0$ (all universal) such that any $f$ solution of
  \eqref{eq:main} in $Q_2$ with $\osc_{Q_2} f \le 2$ and
  $|s| \le \beta$ satisfies
\[ \osc_{Q_{\frac\omega2}} f \le 2 - \lambda_0. \]
\end{lemma}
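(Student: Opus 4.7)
The plan is to carry out a finite De~Giorgi iteration of the intermediate-value lemma (Lemma~\ref{lem:isoperim}), and close the argument at the last step with the $L^2$--$L^\infty$ estimate (Theorem~\ref{thm:sup-sub}). Since $\osc_{Q_2} f \le 2$, after subtracting the midpoint we may assume $-1 \le f \le 1$ on $Q_2$; possibly replacing $f$ by $-f$, we arrange
\[
|\{f \le 0\}\cap \hat Q| \ge \tfrac12|\hat Q|.
\]
It then suffices to exhibit a universal $\lambda_0 > 0$ with $f \le 1-\lambda_0$ on $Q_{\omega/2}$.

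Fix $\delta_1 \in (0,1)$ universal (to be chosen at the end), and let $\theta \in (0,1)$, $\nu>0$ be the corresponding universal constants from Lemma~\ref{lem:isoperim} applied with $\delta_2 := \tfrac12$. Choose a universal $N \in \N$ with $N\nu > |B_1 \times B_1 \times (-2,0]|$ and set $\beta := (1-\theta)^{N-1}$. For $k=0,1,\ldots,N-1$, introduce the rescaled functions
\[
f_k := \frac{f - \bigl(1 - (1-\theta)^k\bigr)}{(1-\theta)^k},
\]
which solve \eqref{eq:main} with the same $A,B$ and with source $s_k := s/(1-\theta)^k$ satisfying $|s_k|\le \beta/(1-\theta)^k \le 1$. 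One verifies $f_k \le 1$ on $Q_2$ (because $f\le 1$) and $\{f_k \le 0\}=\{f \le 1-(1-\theta)^k\}\supset\{f\le 0\}$, so each $f_k$ meets the hypotheses of Lemma~\ref{lem:isoperim}.

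Suppose, for contradiction, that $|\{f_k \ge 1-\theta\}\cap Q_\omega| \ge \delta_1|Q_\omega|$ for every $k=0,\ldots,N-1$. Lemma~\ref{lem:isoperim} then produces, for each $k$, a set $S_k := \{0<f_k<1-\theta\}\cap (B_1\times B_1\times(-2,0])$ of measure at least $\nu$. Unwinding, $S_k$ corresponds to $f\in\bigl(1-(1-\theta)^k,\ 1-(1-\theta)^{k+1}\bigr)$, so the $S_k$ are pairwise disjoint; summing contradicts $N\nu > |B_1\times B_1\times(-2,0]|$. Hence there exists $k_0\in\{0,\ldots,N-1\}$ with $|\{f_{k_0} \ge 1-\theta\}\cap Q_\omega| < \delta_1|Q_\omega|$. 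The function $g := (f_{k_0}-(1-\theta))_+/\theta$ is then a non-negative sub-solution of \eqref{eq:main} on $Q_\omega$ valued in $[0,1]$, with source bounded in $L^\infty$ by $1/\theta$, and
\[
\|g\|_{L^2(Q_\omega)}^2 \le |\{f_{k_0}\ge 1-\theta\}\cap Q_\omega| \le \delta_1|Q_\omega|.
\]
Choosing $\delta_1$ small enough (universally) so that $\sqrt{\delta_1|Q_\omega|}$ lies below the threshold $\kappa$ provided by Theorem~\ref{thm:sup-sub} (applied from $Q_\omega$ to $Q_{\omega/2}$ with source bound $\mathfrak g = (1/\theta)|Q_\omega|^{1/q}$), we obtain $g \le 1/2$ on $Q_{\omega/2}$, that is $f_{k_0} \le 1-\theta/2$ there, which unravels to $f \le 1-(1-\theta)^{k_0}\theta/2 \le 1-(1-\theta)^{N-1}\theta/2$ on $Q_{\omega/2}$. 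This proves the conclusion with $\lambda_0 := (1-\theta)^{N-1}\theta/2$.

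The main delicate point is the order of quantifier choice: the threshold $\kappa$ in Theorem~\ref{thm:sup-sub} depends on the source bound $1/\theta$, and $\theta$ itself depends on $\delta_1$ via Lemma~\ref{lem:isoperim}, so one must verify that the constraint $\delta_1|Q_\omega|\le \kappa(\theta(\delta_1))^2$ admits a solution with $\delta_1>0$ universal in $d,\lambda,\Lambda$. This is not an obstruction, since the dependencies are explicit and one can resolve them by picking $\delta_1$ once for all. A minor routine verification is that the positive-part operation preserves the sub-solution status of a solution (standard convexity), and that the rescaling $f\mapsto f_k$ leaves the matrix $A$ and vector field $B$—hence the ellipticity constants—unchanged, so that every $f_k$ is admissible in the same class of equations.
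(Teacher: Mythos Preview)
Your overall strategy matches the paper's (which packages the iteration as a separate measure-to-pointwise lemma, Lemma~\ref{lem:measure-to-pointwise}, before deducing Lemma~\ref{lem:osc-decrease}), but there is a genuine computational error in your recursion that invalidates the disjointness claim.

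With your definition $f_k=(f-(1-(1-\theta)^k))/(1-\theta)^k$ one has $f_{k+1}=(f_k-\theta)/(1-\theta)$. Hence the condition $f_k<1-\theta$ unwinds to $f<1-(1-\theta)^k+(1-\theta)^{k+1}=1-\theta(1-\theta)^k$, \emph{not} to $f<1-(1-\theta)^{k+1}$ as you state. The sets $S_k$ therefore correspond to the intervals $\bigl(1-(1-\theta)^k,\,1-\theta(1-\theta)^k\bigr)$, and these are pairwise disjoint only when $\theta\ge 1-\theta$, i.e.\ $\theta\ge 1/2$; since $\theta$ in Lemma~\ref{lem:isoperim} is produced by a compactness argument and is typically small, this cannot be assumed. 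The correct iteration is $f_{k+1}=(f_k-(1-\theta))/\theta$, that is $f_k=\theta^{-k}\bigl(f-(1-\theta^k)\bigr)$; then $\{f_{k+1}\le 0\}=\{f_k\le 1-\theta\}$, the intervals $(1-\theta^k,1-\theta^{k+1})$ are genuinely disjoint, and the source satisfies $|s_k|\le\theta^{-k}\beta$, so one takes $\beta\le\theta^{N}$.

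Second, the circularity you flag at the end is real and your resolution is not valid: the dependence $\theta=\theta(\delta_1)$ in Lemma~\ref{lem:isoperim} is \emph{not} explicit (that lemma is proved by contradiction), so you cannot simply assert that $\delta_1|Q_\omega|\le\kappa(\theta(\delta_1))^2$ has a universal solution. The paper avoids this cleanly by reversing the order of choices: apply Theorem~\ref{thm:sup-sub} with source bound $\mathfrak g=1$ directly to $(f_{k_0})_+$ (no further division by $\theta$), which fixes $\kappa$ independently of $\delta_1$; then choose $\delta_1$ so that $\sqrt{\delta_1|Q_\omega|}\le\kappa$, and only afterwards obtain $\theta,\nu$ from Lemma~\ref{lem:isoperim}. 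With the corrected scaling $|s_k|\le 1$, so $\mathfrak g=1$ is legitimate and there is no loop.
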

This lemma is a consequence of the following one. 
\begin{lemma}[A measure-to-pointwise estimate]\label{lem:measure-to-pointwise}
  Given $\delta_2 >0$, there exist $\lambda_0 \in (0,1)$,
  $\omega \in (0,1/2)$ and $\beta >0$ (depending on $\delta_2$ but not
  on the sub-solution) such that any $f$ sub-solution of \eqref{eq:main}
  in $Q_2$ with $f \le 1$ and $|s| \le \beta$ such that
  $|\{ f \le 0 \} \cap \hat Q | \ge \delta_2|\hat Q|$ satisfies
  \begin{equation}\label{eq:sup-dec}
    f \le 1-\lambda_0 \quad \text{a.e. in } \quad Q_{\frac\omega2}.
  \end{equation}
\end{lemma}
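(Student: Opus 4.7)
The plan is to combine an iteration of the intermediate-value Lemma~\ref{lem:isoperim} with the $L^2$-to-$L^\infty$ estimate of Theorem~\ref{thm:sup-sub}. The heuristic is that if the superlevel sets of $f$ close to $1$ were of large measure in $Q_\omega$ at every dyadic scale, Lemma~\ref{lem:isoperim} would produce a transition region of measure at least $\nu_*$ at each scale; these transition regions can be arranged to be pairwise disjoint, so a finite total-volume argument forces the superlevel set to become genuinely small at some scale $k_0$. At that scale, Theorem~\ref{thm:sup-sub} promotes the measure-theoretic smallness into the sought pointwise bound $f \le 1-\lambda_0$ on $Q_{\omega/2}$.

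Concretely, fix $\omega=1/4$ as in Lemma~\ref{lem:isoperim} and choose $\delta_1 \in (0,1)$ so small that $(\delta_1 |Q_\omega|)^{1/2} \le \kappa$, where $\kappa$ is the universal threshold of Theorem~\ref{thm:sup-sub} corresponding to the cylinders $\Qext=Q_\omega$, $Q_\infty=Q_{\omega/2}$, source bound $\mathfrak g = 1$, and $q=+\infty$; this $\delta_1$ is universal. Let $\theta_* \in (0,1)$ and $\nu_* > 0$ be the constants furnished by Lemma~\ref{lem:isoperim} for the pair $(\delta_1,\delta_2)$, and set $r := \theta_*/2$, $K := \lceil 2|B_1|^2/\nu_* \rceil + 1$, and $\beta := r^K \theta_*$. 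For each integer $k$ with $1 \le k \le K$, introduce the rescaled function
\[
g_k := \frac{f - (1 - r^k)}{r^k},
\]
which is still a sub-solution of \eqref{eq:main} with the same $A$ and $B$ but with source $r^{-k} s$ of $L^\infty$-norm at most $r^{K-k}\theta_* \le 1$. One checks that $g_k \le 1$ and $\{f \le 0\} \subset \{g_k \le 0\}$, so the two standing hypotheses of Lemma~\ref{lem:isoperim} transfer from $f$ to $g_k$.

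The iteration then runs a dichotomy over $k = 1,\dots,K$: either there is a stopping index $k_0 \le K$ with $|\{g_{k_0} \ge 1-\theta_*\} \cap Q_\omega| < \delta_1 |Q_\omega|$, or Lemma~\ref{lem:isoperim} applies at every $k$ and produces
\[
\bigl|\{1-r^k < f < 1-\theta_* r^k\} \cap (B_1 \times B_1 \times (-2,0])\bigr| \ge \nu_*.
\]
Because $r = \theta_*/2$, the level intervals $(1-r^k,\, 1-\theta_* r^k)$ are pairwise disjoint as $k$ varies, and the second alternative cannot occur $K$ times by the choice of $K$; hence a stopping index $k_0 \le K$ must exist. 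At this $k_0$ the function $h := (g_{k_0} - (1-\theta_*))_+ / \theta_*$ is a non-negative sub-solution of \eqref{eq:main} bounded by $1$, with $\|h\|_{L^2(Q_\omega)}^2 \le \delta_1 |Q_\omega|$ and source bounded in $L^\infty$ by $r^{K-k_0} \le 1$; Theorem~\ref{thm:sup-sub} then yields $h \le 1/2$ on $Q_{\omega/2}$, i.e.\ $g_{k_0} \le 1 - \theta_*/2$ on $Q_{\omega/2}$, which unwinds to
\[
f \le 1 - \tfrac12 r^{k_0} \theta_* \le 1 - \tfrac12 r^K \theta_* \qquad \text{on } Q_{\omega/2}.
\]
The conclusion follows with $\lambda_0 := \tfrac12 r^K \theta_*$ and $\beta := r^K \theta_*$, both depending only on $\delta_2$, $d$, $\lambda$, $\Lambda$.

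The main obstacle is to arrange the constants without circularity: $\delta_1$ must be fixed \emph{before} $\theta_*$ is known, which is possible only because the threshold $\kappa$ in Theorem~\ref{thm:sup-sub} depends on the source bound $\mathfrak g$ but not on $\theta_*$ itself; this forces us to rescale by $\beta = r^K \theta_*$ rather than $\beta = r^K$, so that the source of the truncated sub-solution $h$ is automatically bounded by $1$ regardless of how small $\theta_*$ turns out to be. A secondary point is the disjointness of the transition intervals, which is why $r$ must be chosen strictly below $\theta_*$ (any $r \in (0,\theta_*)$ would do, the choice $r=\theta_*/2$ being the cleanest).
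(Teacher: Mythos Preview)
Your proof is correct and follows essentially the same approach as the paper's: iterate the intermediate-value Lemma~\ref{lem:isoperim} finitely many times to force a superlevel set near $1$ to be small in measure, then invoke Theorem~\ref{thm:sup-sub} to upgrade this to a pointwise bound. The only cosmetic differences are (i) the paper takes the rescaling ratio equal to $\theta$ itself and uses that the transition sets $\{1-\theta^k\le f\le 1-\theta^{k+1}\}$ telescope, whereas you take $r=\theta_*/2<\theta_*$ to make the transition intervals strictly disjoint; and (ii) the paper's stopping criterion is $|\{f_k\ge 0\}\cap Q_\omega|\le \delta_1|Q_\omega|$ and it applies Theorem~\ref{thm:sup-sub} directly to $(f_{k_1})_+$, which avoids your extra truncation $h$ and the associated need for the factor $\theta_*$ in $\beta$.
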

\begin{proof}[Proof of Lemma~\ref{lem:osc-decrease}]
  Let $f$ be a solution of \eqref{eq:main} in $Q_2$ with
  $\osc_{Q_2} f \le 2$ and $|s|\le \beta$. We can reduce to the case
  where $|f| \le 1$. Indeed, we remark that there exists a constant
  $C$ such that $\tilde f = f -C$ satisfies \eqref{eq:main} in
  $Q_2(0)$ with $|\tilde f| \le 1$ and the same source term.

  If $|\{ f \le 0 \} \cap \hat Q | \ge |\hat Q|/2$, then apply
  Lemma~\ref{lem:measure-to-pointwise} with
  $\delta_2 = 1/2$.

  In the other case, considering $-f$ implies that the essential
  infimum of $f$ is raised. In both cases, we get the desired
  improvement of the oscillation of $f$. This completes the proof of
  the lemma.
\end{proof}
We now turn to the proof of Lemma~\ref{lem:measure-to-pointwise}. 
\begin{proof}[Proof of Lemma~\ref{lem:measure-to-pointwise}]
The proof proceeds in several steps. \medskip

\noindent \textbf{Choice of parameters.}  Theorem~\ref{thm:sup-sub}
provides us with $\kappa$ correponding to the upper bound $\mathfrak g =1$
on the source term and $\Qext = Q_\omega$ and
$Q_\infty = Q_{\frac\omega2}$.  Lemma~\ref{lem:isoperim} applied with
$\delta_2$ and $\delta_1 = \sqrt{\kappa}/|Q_\omega|$ provides us with
$\nu$ and $\theta$ universal.  We choose next $k_0$ the smallest
positive integer such that
\[ k_0 \nu > |B_1 \times B_1 \times (-2,0)|.\]
We finally choose $\beta$ such that $\beta \le \theta^{k_0}$.
\medskip

\noindent \textbf{Iteration.}
We  define $f_0 =f$ and 
\[ f_{k+1} = \frac1\theta (f_k -(1-\theta)) = \theta^{-k}
(f-(1-\theta^k)) .\] They satisfy $f_k \le 1$ and
\[ (\partial_t + v \cdot \nabla_x ) f_k \le  \nabla_v \cdot (A \nabla_v f_k) +B \cdot \nabla_v f_k + s_k\]
with $s_k = \theta^{-k} s$. In particular $|s_k|\le \theta^{-k_0}
\beta \le 1$ which allows to apply Theorem~\ref{thm:sup-sub} with the
upper bound $\mathfrak g=1$ as above. 
Remark that 
\begin{equation}\label{eq:estim-set}
  |\{ f_0 \le 0 \} \cap \hat Q| \ge \delta_2 |\hat Q| 
\quad \mbox{ and } \quad \{ f_{k+1} \le 0 \}  \supset \{ f_k \le 0 \}. 
\end{equation}

Our goal is to prove that there exists at least one index
$k \in \{1,\dots,k_0\}$ such that
\[ |\{f_{k} \ge 0 \} \cap Q_\omega | \le \delta_1 |Q_\omega| .\]
Indeed, remarking that for such an index $k_1$
\[ \|(f_{k_1})_+\|_{L^2(Q_\omega)} \le \bigg[ \big|\{ f_{k_1} \ge 0 \}
\cap Q_\omega \big| \bigg]^{\frac12} \le \sqrt{\delta_1 |Q_\omega|}
\le \kappa,\] Theorem~\ref{thm:sup-sub} then implies that
\[ f \le 1 - \frac12\theta^{k_1} \le 1- \frac12\theta^{k_0} \quad
\text{ in } \quad Q_{\frac\omega2}\]
which concludes the proof. 

Let us prove the claim by contradiction. Assume that for all
$k = 1,\dots,k_0$,
\[ |\{f_k \ge 0 \} \cap Q_\omega | \ge \delta_1 |Q_\omega|.\]
Since $f_{k+1} = \frac1\theta( f_k -(1-\theta))$, this also implies
for $k=0,\dots,k_0-1$,
\[ |\{f_k \ge 1 -\theta \} \cap Q_\omega | \ge \delta_1 |Q_\omega|.\]
But \eqref{eq:estim-set} also implies that for all $k \ge 0$,
\[ |\{ f_k \le 0 \} \cap \hat Q|  \ge \delta_2 | \hat Q|.\]
Hence Lemma~\ref{lem:isoperim} implies that for $k=0,\dots,k_0-1$,
\[ \left|\{ 0 \le f_k \le 1-\theta \} \cap \left( B_1 \times B_1
    \times (-2,0) \right) \right| \ge \nu .\]

Now remark that 
\begin{align*}
\left|\{f_{k+1} \le 0 \} \cap \left( B_1 \times B_1 \times (-2,0)
  \right) \right| 
= &  \left|\{f_k \le 0 \} \cap \left( B_1 \times B_1 \times (-2,0)
    \right) \right| \\
& + \left|\{0 \le f_k \le 1 -\theta \} \cap \left( B_1 \times B_1
  \times (-2,0) \right) \right| \\
\ge &  \left|\{f_k \le 0 \} \cap \left( B_1 \times B_1 \times (-2,0)
      \right) \right| + \nu.
\end{align*}
In particular 
\[
\left|B_1 \times B_1 \times (-2,0) \right|  \ge  \left|\{f_{k_0} \le 0
  \} \cap \left( B_1 \times B_1 \times (-2,0) \right) \right|  \ge  k_0 \nu
\]
which is impossible for $k_0$ as chosen above. The proof is now complete. 
\end{proof}

\subsection{Proof of the H\"older estimate}

\begin{proof}[Proof of Theorem~\ref{thm:holder}]
  Consider an $L^2$ solution $f$ of Eq.~\eqref{eq:main} in a cylinder
  $\Qext=Q_{r_0}(z_0)$.  By Theorem~\ref{thm:sup-sub}, we know that
  $f$ is locally bounded in $\Qext$.  In particular, $f$ is bounded in
  $\Q12 = Q_{\frac{r_0+r_1}2}(z_0)$ and
  \[ \|f\|_{L^\infty (\Q12)} \le C_0 \left( \|f \|_{L^2 (\Qext)} + \|
    s \|_{L^\infty(\Qext)} \right) \]
  for some constant $C_0 = C(d,\lambda,\Lambda,\Qext,\Q12)$.  If
  $f \equiv 0$ in $\Qext$, there is nothing to prove. If $f$ is not
  identically $0$, recalling that $\beta$ is given by
  Lemma~\ref{lem:osc-decrease}, we assume that
\[
 \|f \|_{L^\infty (\Q12)} \le 1 \quad \text{ and } \quad \|s\|_{L^\infty(\Qext)} \le \beta
\]
by considering, if necessary,
\[ 
\tilde{f} = \frac{f}{C_0\left( \|f \|_{L^2 (\Qext)} + \| s
    \|_{L^\infty(\Qext)} \right) + \beta^{-1}
  \|s\|_{L^\infty(\Qext)}}.
\]
\medskip

Let $z_1 \in \Qint := Q_{r_1}(z_0)$. We want to prove that for all
$r>0$ such that $Q_{2r}(z_1) \subset \Q12$,
\begin{equation}\label{eq:osc-alg}
 \osc_{Q_r(z_1)} f \le C r^\alpha
\end{equation}
for some universal $\alpha \in (0,1)$ and some constant
$C=C(d,\lambda,\Lambda,r_0,r_1)$. Let $\tilde r>0$ denote the largest
$r \in (0,1)$ such that $Q_{2r}(z_0) \subset \Q12$.  We remark that
for $r \in (0,\tilde r)$,
\( Q_{2r}(z_1) = \mathcal{T}^{-1}_{z_1} (Q_{2r}) \)
where $\mathcal{T}_{z_1}$ is defined in Eq.~\eqref{eq:transformation}
and $\bar f = f \circ \mathcal{T}_{z_1}$ satisfies \eqref{eq:main} in
$Q_{2\tilde r}$ with the source term
$\bar s := s \circ \mathcal{T}_{z_1}$ and the coefficients
$\bar A := A \circ \mathcal{T}_{z_1}$ and
$\bar B := B \circ \mathcal{T}_{z_1}$. In particular $\bar f$ and
$\bar s$ satisfy
\[
 \|\bar f \|_{L^\infty (Q_{2 \tilde r})} \le 1 \quad \text{ and }
 \quad \|\bar s\|_{L^\infty(Q_{2 \tilde r})} \le \beta
\]
 and \eqref{eq:osc-alg} is equivalent to: for all $r \in (0,\tilde r)$, 
\begin{equation}\label{eq:osc-alg-0}
 \osc_{Q_r} \bar f \le C r^\alpha .
\end{equation} 

We recall how to scale solutions. For all $r \in (0,\tilde r)$, the
function
\[\bar f_r (x,v,t)=  \bar f(r^3x,r v,r^2 t)\]
is defined in $Q_2$ and satisfies \eqref{eq:main} with
\[ \begin{cases}
\bar B_r (x,v,t) = r \bar B (r^3 x,r v,r^2 t)\\[2mm]
\bar s_r (x,v,t)= r^2 \bar s (r^3 x, r v,r^2 t). 
\end{cases}
\] 
Since $\osc_{Q_{2\tilde r}} \bar f \le 2$, we have
$\osc_{Q_2} \bar f_{\tilde r} \le 2$ and Lemma~\ref{lem:osc-decrease}
implies that
\[ \osc_{Q_{\frac\omega2}} \bar f_{\tilde r} = \osc_{Q_{\frac{\omega}2
    \tilde r}} \bar f \le 2 \theta \]
with $\theta = 1 - \lambda_0/2$ (we used the fact that
$\tilde r \le 1$ to ensure that
$\| \bar s_{\tilde r} \|_{L^\infty(Q_2)} \le \beta$).  We remark that
we can assume that $\theta \ge 1/2$ and we recall that
$\omega \in (0,1/2)$. We next apply Lemma~\ref{lem:osc-decrease} to
$\theta^{-1} \bar f_{\tilde r_1}$ with
$\tilde r_1 = (\omega/4) \tilde r$, which rescales the $L^\infty$ bound
on the source term by a factor $(\omega/4)^2 \theta^{-1}<1$ as compared to $\|
\bar s_{\tilde r} \|_{L^\infty(Q_2)} \le \beta$. Hence the bounds
assumed are still valid and we get
\[ \osc_{Q_{\tilde r_2}} \bar f \le 2 \theta^2 \]
with $\tilde r_2 = (\omega/2) \tilde r_1$. Inductively, we deduce
that
\[ \osc_{Q_{\tilde r_k}} \bar f \le 2 \theta^k \]
with $\tilde r_k = (\omega/2)^k \tilde r/2$.
This yields \eqref{eq:osc-alg-0} for $r =\tilde r_k$ with 
\[ 
\alpha = \frac{\ln \theta}{\ln (\omega/2)} \quad \text{ and } \quad C
= 2 \left(\frac2{\tilde r}\right)^\alpha.
\] 
If now $r \in [\tilde r_{k+1},\tilde r_k]$, then
\[ \osc_{Q_r} \bar f \le \osc_{Q_{r_k}} \bar f \le C \tilde r_k^\alpha
= C \left(\frac2\omega\right)^\alpha \tilde r_{k+1}^\alpha \le \tilde
C r^\alpha \]
with $\tilde C = C (2/\omega)^\alpha$. Observe finally that the
constant $C$ and $\tilde C$ are uniformly bounded above as $z_0$
varies in $\Qint$ since $\tilde r \ge r_1 - r_0$. The proof is now
complete.
\end{proof}

\section{Harnack inequality}
\label{sec:harnack}

In this section, we derive Harnack inequality for solutions to
Eq.~\eqref{eq:main}.  We use here an approach that Luis Silvestre
explained to us in the stationary setting: we start with H\"older
continuous solutions and we consider expanding cylinders to control
the spreading of the lower bound of non-negative solutions (see
Lemma~\ref{lem:doubling}).  The Harnack inequality is a consequence of
the decrease of oscillation we proved earlier and a so-called
``doubling property'' that estimates how the minimum of a solution
propagates with time. Let us first recall the decrease of oscillation
proposition.
\begin{proposition}[Decrease of oscillation]\label{prop:osc-decrease-true}
  There exist $\delta \in (0,1)$ and $\omega \in (0,1/2)$ (both
  universal) such that for any $r \in (0,1)$ and any solution $f$ of
  \eqref{eq:main} in some cylinder $Q_{2r}(z)$ satisfies
  \[ \osc_{Q_{\frac\omega4 r} (z)} f \le (1 - \delta) \left(
    \osc_{Q_r(z)} f + 2\beta^{-1} \|s\|_{L^\infty}\right). \]
\end{proposition}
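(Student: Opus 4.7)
The plan is to derive this scale-invariant statement from Lemma~\ref{lem:osc-decrease}, which gives the oscillation decrease ``at unit scale'' under the normalized hypotheses $\osc_{Q_2} f \le 2$ and $|s| \le \beta$. The argument is the standard packaging of the translation, scaling, and affine normalization already used inside the proof of Theorem~\ref{thm:holder}, now stated as a clean quantitative decrease.

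First I would apply $\mathcal{T}_z^{-1}$ from \eqref{eq:transformation} to reduce to $z = 0$, so that $Q_{2r}(z)$ becomes $Q_{2r}$ and the equation \eqref{eq:main} is preserved with unchanged $L^\infty$ bounds on the coefficients and source. Then I rescale by setting $\bar f(x,v,t) := f(r^3 x, r v, r^2 t)$, which maps $Q_{2r}$ to $Q_2$. A direct computation shows that $\bar f$ solves \eqref{eq:main} on $Q_2$ with rescaled data
\[
  \bar A, \qquad \bar B_r := r \,\bar B, \qquad \bar s_r := r^2 \,\bar s,
\]
where $\bar A, \bar B, \bar s$ denote the original coefficients composed with the scaling. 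Since $r \in (0,1)$, the factors $r$ and $r^2$ give $|\bar B_r| \le \Lambda$ and $\|\bar s_r\|_{L^\infty(Q_2)} \le \|s\|_{L^\infty(Q_{2r}(z))} =: S$, while $\bar A$ retains the ellipticity constants $\lambda, \Lambda$; so \eqref{eq:ellipticity} is preserved.

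Second, I affinely normalize. With $M := \osc_{Q_{2r}(z)} f = \osc_{Q_2} \bar f$ and $m := \inf_{Q_2} \bar f$, set
\[
  g := \frac{2\,(\bar f - m)}{M + 2\beta^{-1} S}.
\]
Since equation \eqref{eq:main} has no zeroth-order term, subtracting a constant and then dividing by a positive constant preserves the equation's structure, so $g$ solves \eqref{eq:main} with source $\tilde s := 2\,\bar s_r / (M + 2\beta^{-1} S)$. By construction $0 \le g \le 2$ on $Q_2$, and
\[
  \|\tilde s\|_{L^\infty(Q_2)} \le \frac{2 S}{M + 2\beta^{-1} S} \le \frac{2 S}{2\beta^{-1} S} = \beta.
\]
Hence $g$ satisfies the hypotheses of Lemma~\ref{lem:osc-decrease}, which yields $\osc_{Q_{\omega/2}} g \le 2 - \lambda_0$. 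Unravelling through the normalization and scaling gives $\osc_{Q_{\omega r/2}(z)} f \le (1 - \lambda_0/2)\,(M + 2\beta^{-1} S)$, and since $Q_{\omega r/4}(z) \subset Q_{\omega r/2}(z)$ the same estimate holds a fortiori on the smaller cylinder, providing the conclusion with $\delta := \lambda_0/2$.

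I do not anticipate any serious obstacle: the only checks are (i) that the intrinsic scaling of \eqref{eq:main} preserves \eqref{eq:ellipticity} when $r \le 1$, which is immediate from the factors $r$ and $r^2$ appearing in $\bar B_r$ and $\bar s_r$, and (ii) that the affine normalization brings the normalized source below $\beta$, which is built into the summand $2\beta^{-1} S$ in the denominator. All the real work is in Lemma~\ref{lem:osc-decrease} (hence in the intermediate-value Lemma~\ref{lem:isoperim} and the $L^2$--$L^\infty$ bound Theorem~\ref{thm:sup-sub}), which is already established.
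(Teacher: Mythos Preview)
Your proof is correct and follows essentially the same route as the paper: translate via $\mathcal{T}_z^{-1}$, rescale to unit scale, affinely normalize so that $\osc \le 2$ and the source is bounded by $\beta$, then invoke Lemma~\ref{lem:osc-decrease} to obtain $\delta = \lambda_0/2$. One minor bookkeeping point (shared by the paper's own write-up): since you set $M := \osc_{Q_{2r}(z)} f$ and scale $Q_{2r} \to Q_2$, the bound you actually unwind is $(1-\delta)\big(\osc_{Q_{2r}(z)} f + 2\beta^{-1}S\big)$ rather than with $\osc_{Q_r(z)} f$ as in the statement; to match the statement exactly, rescale by $r/2$ so that $Q_r \mapsto Q_2$ and normalize by $\osc_{Q_r(z)} f$ instead.
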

\begin{remark}
The conclusion of the proposition is equivalent to 
\[ \osc_{Q_{\frac\omega4 r}} f \circ \mathcal{T}_z \le (1 -
\delta)\left( \osc_{Q_r} f \circ \mathcal{T}_z +
2\beta^{-1}\|s\|_{L^\infty}\right)\]
with $\mathcal{T}_z (y,w,s) = (x+ y + s v, v+w,t+s)$ where
$z=(x,v,t)$.
\end{remark}
\begin{proof}
  By considering
  \[
  \tilde f = \frac{ f \circ \mathcal{T}_z}{\osc_{Q_{2r}(z)} f/2 +
    \|s\|_{L^\infty}/\beta},
  \]
  and a rescaling $\tilde f_r$, we can assume that $z=0$ and
  $\osc_{Q_{2}} \tilde f_r \le 2$ and $\|s\|_{L^\infty} \le \beta$ (we
  use here that $r \le 1$). We then apply Lemma~\ref{lem:osc-decrease}
  to $\tilde f_r$ and get the desired result with
  $1-\delta = 1-\lambda_0/2$.
\end{proof}

\subsection{How minima propagate with time} 

The  goal of this subsection is to prove the following proposition. 
In order to state it, we introduce two cylinders which contain $Q^-$:
\[ Q^- \subset Q^-[1] \subset Q^-[2] \subset Q_1.\]
See Figure~\ref{cylinders}.
\begin{figure}
\includegraphics[height=6cm]{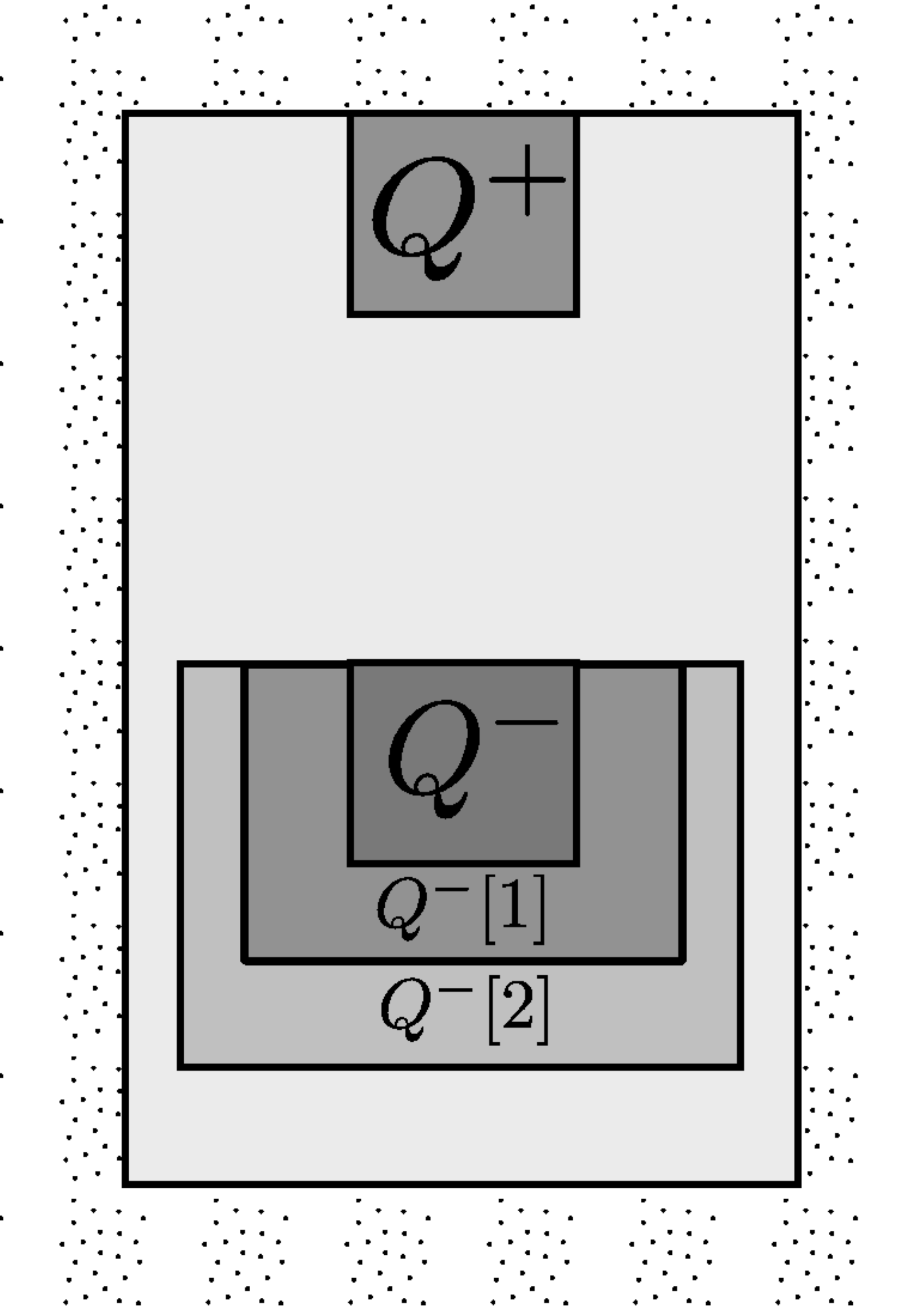}
\caption{The cylinders $Q^+$, $Q^-$, $Q^-[1]$ and $Q^-[2]$. Harnack
  inequality relates the supremum of a solution over $Q^-$ and its
  infimum over $Q^+$.  The proof consists in constructing a sequence
  of points lying in $Q^-[1]$ and whose corresponding values
  explode. Neighborhoods of points included in $Q^-[2]$ are also considered.}
\label{cylinders}
\end{figure}
We recall that $Q^+ = Q_R$ and $Q^- = Q_{R}(0,0,-\Delta)$ and
$R,\Delta \in (0,1)$ are small so that in particular
$Q^\pm \subset Q_1$ and they are disjoint.  We let $Q^-[i]$ be equal
to $Q_{\rho_i} (0,0,-\Delta)$ with $R < \rho_1 < \rho_2 < 1$.

In the following propositions, we introduce \emph{elongated} cylinders
$\Qel$ where the time is stretched longer in the past than what the
scaling would induce:
\begin{align*}
\Qel_1 &= B_{(\omega/4)^3} \times B_{\omega/4} \times (-1,0] \\
 \Qel_r (z) &= \mathcal{T}_z  \big( B_{(\omega/4)^3 r^3} \times B_{(\omega/4) r} \times (-r^2,0] \big).
\end{align*}
\begin{proposition}[The propagation of minima]\label{prop:propagation}
  Assume that $f$ is a non-negative super-solution of \eqref{eq:main}
  in $Q_1$  with a non-negative source term $s$.  There exists
  $r_0 >0$, $R>0$ (universal) such that for any $r \in (0,r_0)$ and
  $z \in Q^-$ such that $\Qel_r(z) \subset Q^-[2]$, we have
\[ \min_{\Qel_r (z)} f \le C_{\text{{\em pm}}} \; r^{-q} \min_{Q^+} f \]
for some universal constants $C_{\text{{\em pm}}}$ and $q >0$. 
\end{proposition}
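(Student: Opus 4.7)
The plan is to iterate a ``one-step expansion of positivity'' estimate, itself a direct consequence of the measure-to-pointwise Lemma~\ref{lem:measure-to-pointwise} applied to the non-negative sub-solution $g := 1 - f/M$, in order to transport the lower bound of $f$ forward in time from a cylinder of scale $r$ to one of scale $O(1)$ through a chain of geometrically expanding cylinders.

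\textbf{Normalization.} Since $s\ge 0$, the function $f$ is also a super-solution of the source-free equation, so we drop $s$. Replacing $f$ by $f+\eps$ (still a non-negative source-free super-solution) and letting $\eps\to 0$ at the end, we may assume $m := \min_{Q^+} f > 0$, and by linearity we normalize $m=1$. Setting $K := \min_{\Qel_r(z)} f$, the claim reduces to $K \le C_{\mathrm{pm}}\, r^{-q}$, equivalently to the contrapositive ``expansion of positivity'' statement: if $f \ge K$ on $\Qel_r(z)$, then $\min_{Q^+} f \ge c\,K\,r^{q}$ for some universal $c,q>0$.

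\textbf{One-step lemma and chain of expanding cylinders.} Applied after rescaling at scale $\rho$ around a point $w$, Lemma~\ref{lem:measure-to-pointwise} yields the following: if $f \ge M$ on a subset of the past cylinder $\hat Q_\rho(w) := \mathcal{T}_w(B_{(\omega\rho)^3}\times B_{\omega\rho}\times(-(1+\omega^2)\rho^2,-\rho^2])$ of relative measure at least $\delta_2$, then $f\ge \lambda_0 M$ on the future cylinder $Q_{\omega\rho/2}(w)$. We construct a chain $C_0 \subset C_1 \subset \cdots \subset C_N$ satisfying $f \ge \lambda_0^j K$ on $C_j$. The base $C_0 \subset \Qel_r(z)$ is of scale $\sim r$; it is here that the elongation of $\Qel_r(z)$ in time (length $r^2$ versus velocity radius $(\omega/4)r$) is essential, as it allows a full past cylinder $\hat Q_\rho$ at scale $\rho\sim r$ to sit inside $\Qel_r(z)$. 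At each step, $C_j$ plays the role of a subset of measure at least $\delta_2\, |\hat Q_{\rho_{j+1}}(w_{j+1})|$ inside a past cylinder centered at a new base point $w_{j+1}$ shifted forward in time, and we set $C_{j+1} := Q_{\omega\rho_{j+1}/2}(w_{j+1})$. Choosing $\rho_{j+1} = \eta\,\rho_j$ with a universal factor $\eta>1$ (dictated by $\delta_2$ together with the hypoelliptic volume ratio $\rho^{4d+2}$), the scales grow geometrically and $N \asymp \log(R/r)/\log\eta$ steps suffice for $C_N \supset Q^+$. The inclusions $\Qel_r(z) \subset Q^-[2]$ and $Q^+\subset Q_1$ guarantee that the whole chain remains in the domain where $f$ is a super-solution.

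\textbf{Conclusion and main obstacle.} It follows that $\min_{Q^+} f \ge \lambda_0^N\,K \ge c\,K\,r^{q}$ with $q:=\log(1/\lambda_0)/\log\eta$ universal, which rearranges to $K\le c^{-1}r^{-q}$ and proves the proposition. The main obstacle is the geometric bookkeeping of the chain: because of the hypoelliptic anisotropy (radii $\rho^3$, $\rho$, $\rho^2$ in $x$, $v$, $t$) the ratios of cylinder volumes depend sensitively on how past and future cylinders are placed relative to one another, and one must ensure simultaneously (i) that $C_j$ occupies a fraction $\ge \delta_2$ of the new past cylinder $\hat Q_{\rho_{j+1}}(w_{j+1})$, (ii) that $\rho_j$ grows by a uniform factor $\eta>1$, and (iii) that the new future $C_{j+1}$ genuinely contains $C_j$. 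Once a valid positioning rule is exhibited at unit scale, scaling and translation propagate it through the whole iteration, and the universality of $\eta$ and $\lambda_0$ yields the universality of $C_{\mathrm{pm}}$ and $q$.
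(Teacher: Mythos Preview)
Your proposal is correct and follows essentially the same route as the paper: iterate the measure-to-pointwise lemma through a chain of geometrically expanding cylinders (the paper packages the one-step as a ``doubling property'' between \emph{elongated} cylinders at every stage, which yields the clean factor $\eta=2$, and isolates the geometric bookkeeping you flag as the main obstacle in a separate technical lemma proved in the appendix). One minor slip: the cylinders $C_j$ are shifted forward in time rather than nested, so the inclusions $C_0\subset C_1\subset\cdots$ and your point (iii) should be dropped---what the argument actually needs is only that each $C_j$ sits inside the next past cylinder $\hat Q_{\rho_{j+1}}(w_{j+1})$ with the required relative measure, and that the final $C_N$ covers $Q^+$.
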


We first derive from Lemma~\ref{lem:measure-to-pointwise} the
following doubling property at the origin. For the two next lemmas, it
is easier that $0$ is the final time of the first cylinder.
\begin{lemma}[The doubling property at the origin]\label{lem:doubling-0}
  There exists $\mathfrak h \in (0,1)$ (universal) such that for any
  non-negative super-solution $f$ of \eqref{eq:main} in
  $B_8 \times B_2 \times (-1,4]$ with $s \ge 0$, we have
  \[ \inf_{Q^1} f \ge \mathfrak h \inf_{Q^0} f\]
  with $Q^1 = \Qel_2(0,0,4)$ and $Q^0=\Qel_1$.
\end{lemma}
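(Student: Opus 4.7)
The plan is to propagate the pointwise lower bound $f \ge m := \inf_{Q^0} f$ forward in time from $Q^0$ to $Q^1$ by iterating Lemma~\ref{lem:measure-to-pointwise}. We may assume $m > 0$, otherwise the inequality is trivial. Since $s \ge 0$, the super-solution $f$ also satisfies the homogeneous inequality $(\partial_t + v \cdot \nabla_x) f \ge \nabla_v \cdot (A \nabla_v f) + B \cdot \nabla_v f$, so $g := 1 - f/m$ is a sub-solution of \eqref{eq:main} with zero source on $B_8 \times B_2 \times (-1, 4]$, satisfying $g \le 1$ everywhere (since $f \ge 0$) and $g \le 0$ on $Q^0$. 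Lemma~\ref{lem:measure-to-pointwise} may therefore be invoked with any $\beta \ge 0$.

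The iteration will produce a nested sequence of ``conquered'' regions $R_0 \subset R_1 \subset \cdots \subset R_N$ with $R_0 = Q^0$, $f \ge \lambda_0^k m$ on $R_k$, and $R_N \supset Q^1$. At step $k$ we apply Lemma~\ref{lem:measure-to-pointwise} to $g_{k-1} := 1 - f/(\lambda_0^{k-1} m)$ after pulling back through the Galilean transformation $\mathcal{T}_{\zeta}$ combined with the kinetic rescaling $(X, V, T) \mapsto (r^3 X, r V, r^2 T)$. For each of finitely many carefully chosen pairs $(\zeta, r)$, the pulled-back parent cylinder must lie inside the domain $B_8 \times B_2 \times (-1, 4]$, and the previously-conquered $R_{k-1}$ must cover a universal fraction $\delta_2$ of the pulled-back $\hat Q$. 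The lemma then yields $g_{k-1} \le 1 - \lambda_0$, i.e.\ $f \ge \lambda_0^k m$, on the pulled-back copy of $Q_{\omega/2}$, which is appended to form $R_k$.

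The core claim is that a universally bounded number $N$ of applications suffices to cover all of $Q^1$. For any target point $z^* = (x^*, v^*, t^*) \in Q^1$, one constructs a chain of centers $\zeta_0 \in Q^0, \zeta_1, \ldots, \zeta_N = z^*$ that approximately follows the kinetic characteristic $t \mapsto (x^* + (t - t^*) v^*, v^*, t)$ back from $z^*$ into $Q^0$, together with scales $r_j$ bounded below by a universal constant. Because $Q^0$ and $Q^1$ are both of $O(1)$-extent in all three kinetic scales, and each application advances $R_k$ along the characteristic by a fixed amount determined by $\omega$ and $r$, the number of steps $N$ is universally bounded. Setting $\mathfrak{h} := \lambda_0^N$ yields $\inf_{Q^1} f \ge \mathfrak{h} m$ as desired.

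The main obstacle is the explicit geometric construction of $(\zeta_k, r_k)$: each pulled-back copy of $Q_2$ must fit inside $B_8 \times B_2 \times (-1, 4]$ --- which restricts $r_k$ when $t^*$ approaches $4$ and when the velocity component of $\zeta_k$ approaches the boundary of $B_2$ --- while still producing the $\delta_2$-overlap with $R_{k-1}$. Carefully tracking how the spatial component of $\hat Q$ drifts along the characteristic under $\mathcal{T}_{\zeta}$ is essential, as is choosing the scales $r_k$ so that the rescaled conclusion's velocity ball eventually covers the full $B_{\omega/2}$ of $Q^1$ (note that a single application at scale $r=1$ has conclusion $v$-ball exactly $B_{\omega/2}$, matching $Q^1$ spatially, but is only admissible for time-shifts in $[3,4]$; smaller scales handle earlier times at the cost of needing several applications per velocity ball).
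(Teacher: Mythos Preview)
Your core idea --- apply Lemma~\ref{lem:measure-to-pointwise} to $1-f/m$ and iterate --- is exactly the paper's. But you have over-engineered the iteration with varying centres $\zeta_k$, varying scales $r_k$, and characteristic-tracking, and then left the resulting geometric construction as an unresolved ``obstacle''. None of that machinery is needed.

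The simplification you are missing is that $Q^0 = \Qel_1$ is \emph{elongated in time}: its time interval is the full $(-1,0]$, not the scaled $(-(\omega/4)^2,0]$. Hence for every $T \in [0,\,1-(\omega/4)^2]$ the cube $Q_{\omega/4}(0,0,-T)$ sits inside $Q^0$, so it plays the role of $\hat Q$ with $\delta_2 = |Q_{\omega/4}|/|Q_\omega|$, and a single application of Lemma~\ref{lem:measure-to-pointwise} at scale $r=1$, centred at the origin in $(x,v)$ and time-shifted by $1-T$, yields $f \ge \lambda_0 m$ on $Q_{\omega/2}(0,0,1-T)$. Letting $T$ vary, the output cubes cover the whole strip $B_{(\omega/2)^3}\times B_{\omega/2}\times(0,1]$. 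That strip in turn contains a time-translate of $Q^0$, so the same argument propagates the bound to $(1,2]$, then $(2,3]$, then $(3,4]$; four steps give $\mathfrak h=\lambda_0^4$. No characteristics, no moving centres, no rescaling --- and $N=4$ is explicit rather than merely ``universally bounded''.

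Your worry that a scale-$1$ application is ``only admissible for time-shifts in $[3,4]$'' comes from reading the domain $Q_2$ in Lemma~\ref{lem:measure-to-pointwise} literally; the proof of that lemma only uses $\hat Q$, $Q_{\omega/2}$, and the slab $B_1\times B_1\times(-2,0]$, so the effective time depth required is $2$ rather than $4$, and the first application (final time $1$) does fit inside $(-1,4]$.
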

\begin{proof}
  We first note that since $s \ge 0$, the function $f$ is a
  super-solution of \eqref{eq:main} with $s=0$. 
We first prove that 
\begin{equation}\label{eq:min-delay}
  \inf_{Q_{\omega/2} (0,0,1)} f \ge \mathfrak h_0 \inf_{Q_{\omega/4}} f
\end{equation}
for some universal constant $\mathfrak h_0$; see Figure~\ref{fig:doubling}. 

\begin{figure}[ht]
\includegraphics[height=6cm]{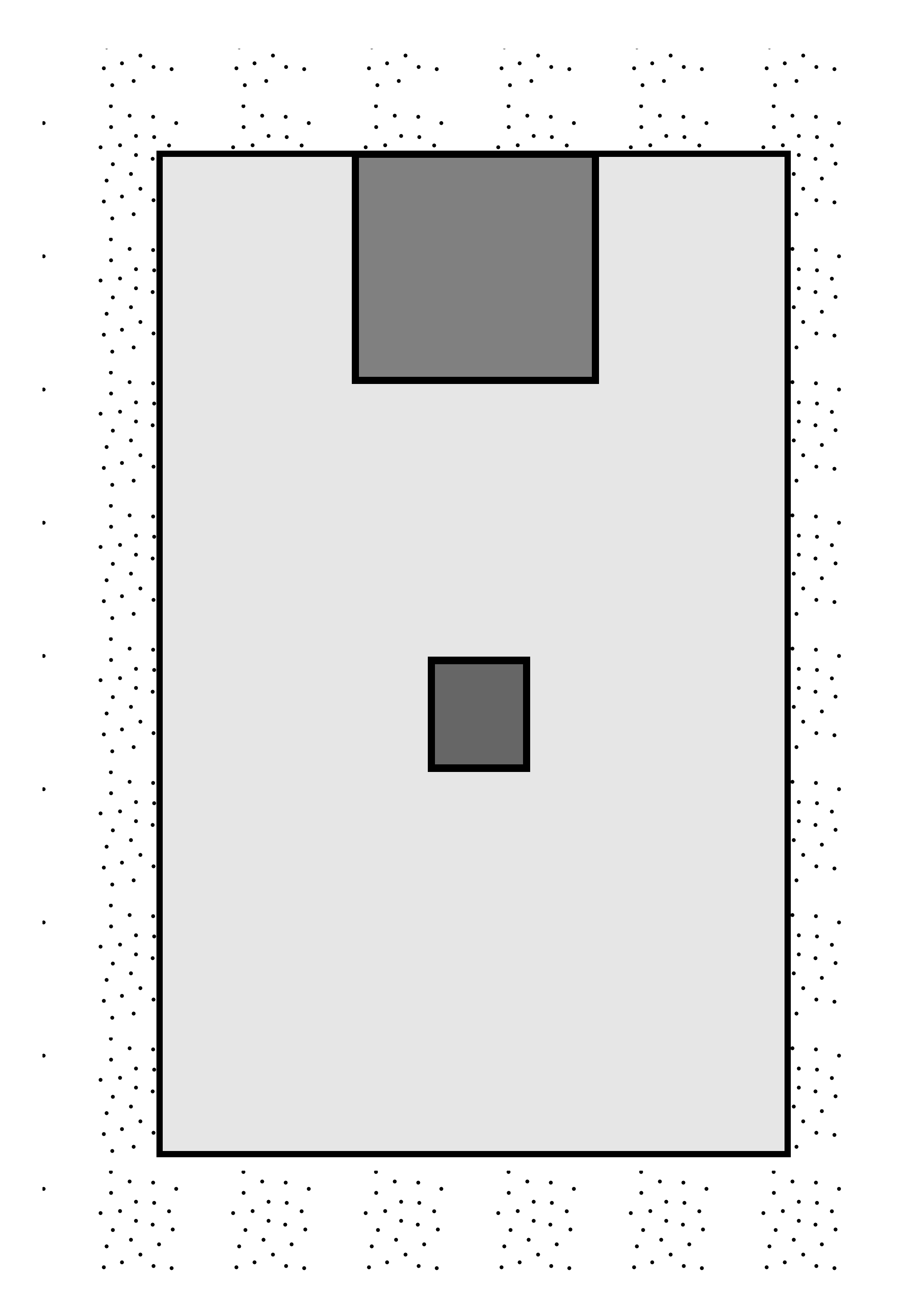}
\includegraphics[height=6cm]{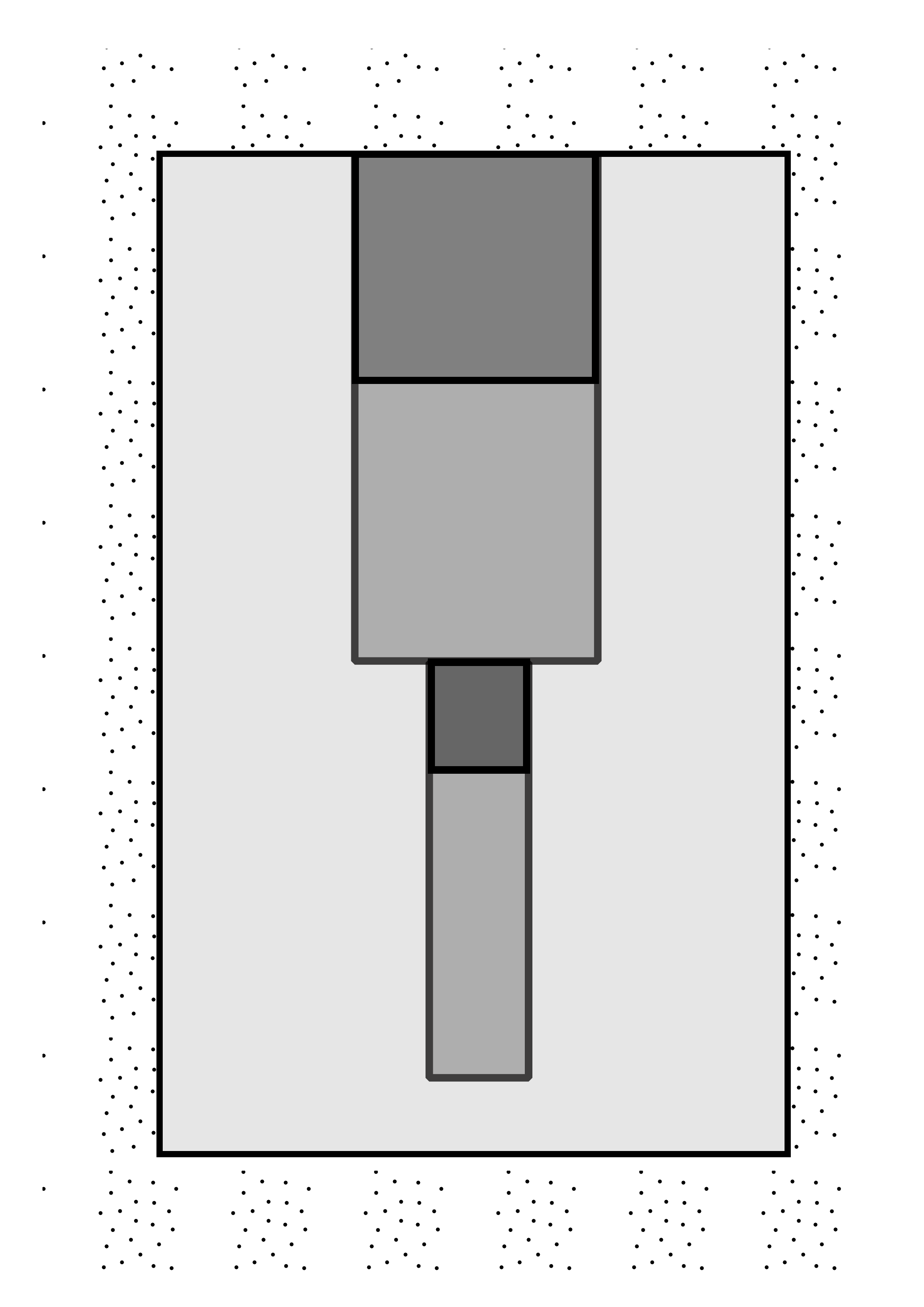}
\includegraphics[height=6cm]{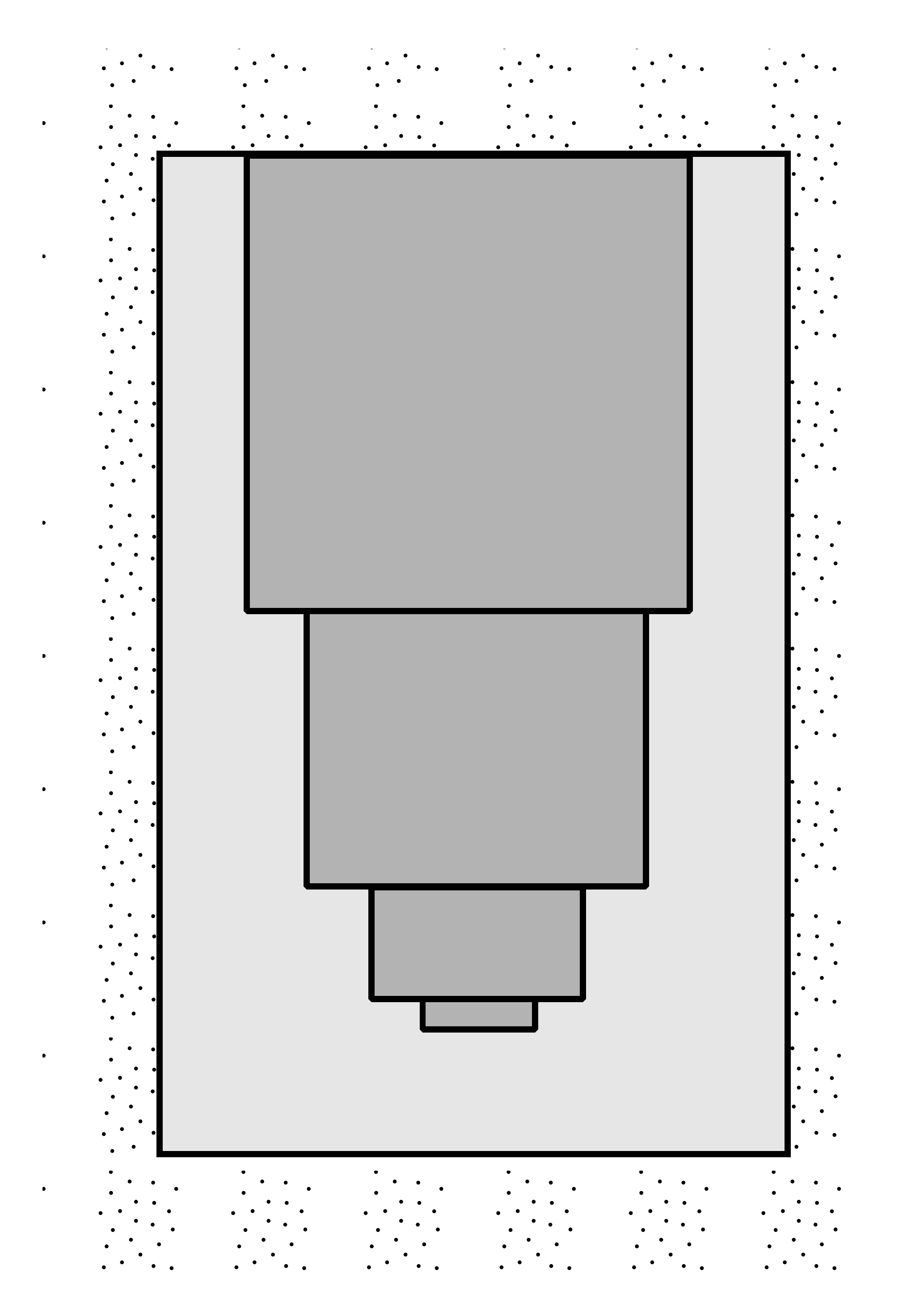}
\caption{The doubling property. On the left, the cylinders
  $Q_{\omega/4}$ and $Q_{\omega/2} (0,0,1)$. In the middle, the
  elongated cylinders $Q^0$ and $Q^1$.  On the right, the iterated
  cylinders $Q^0, \dots, Q^N$ (Lemma~\ref{lem:doubling}).}
\label{fig:doubling}
\end{figure}

If \(\inf_{Q_{\omega/4}}f=0,\)
there is nothing to prove. If not, the function
\[g = \frac{f}{\inf_{Q_{\omega/4}}f}\]
satisfies \eqref{eq:main} in $Q_2$ (up to translation in time -- this
is where we use that $s=0$) and
\[ | \{ g \ge 1 \} \cap Q_\omega| \ge |Q_{\omega/4}| = \delta_2 |Q_\omega|\]
for some universal $\delta_2$, where $Q_\omega$ plays the role of
$\hat Q$ in Lemma~\ref{lem:measure-to-pointwise}.  We then apply
Lemma~\ref{lem:measure-to-pointwise} (with time shifted by $+1$) to
$\tilde g = 1-g \le 1$, we get $g \ge \mathfrak h_0$ in
$B_{(\omega/2)^3} \times B_{\omega/2} \times (1-(\omega/2)^2,1]$, that
is to say, \eqref{eq:min-delay} indeed holds true.

Apply now the result to $\tilde f (x,v,t) = f (x,v,t-T)$ for
$T\in [0,1-\omega^2]$ and get
\begin{equation}\label{eq:min-trunc}
 \inf_{B_{(\omega/2)^3} \times B_{\omega/2} \times (0,1]} f \ge
 \mathfrak h_0 \inf_{Q^{0}} f.
\end{equation}
By applying \eqref{eq:min-trunc} on time intervals $(1,2]$, $(2,3]$
and $(3,4]$, we propagate the infimum till time $t=4$ and get the
desired result for $\mathfrak h = \mathfrak h_0^4$.
\end{proof}
Applying iteratively the previous lemma, we obtain straightforwardly
the following lemma whose proof is omitted.
\begin{lemma}[The iterated doubling property at the
  origin]\label{lem:doubling}
  There exists $\mathfrak h >0$ (universal) such that for any $f$
  non-negative super-solution of \eqref{eq:main} in
  $B_{2^{3N}} \times B_{2^N} \times (-1,T^{N})$, we have
\begin{equation}\label{eq:doubling}
 \inf_{Q^N} f \ge  \mathfrak h^N \inf_{Q^0} f
\end{equation}
with  
\[
Q^k = B_{R_k^3} \times B_{R_k} \times (T_{k-1},T_k] \quad \text{ for } \quad k \ge 1
\]
where $R_k =(\omega/4) 2^k$ and $T_k = \frac43 (4^{k}-1)$ for
$k \ge 0$.
\end{lemma}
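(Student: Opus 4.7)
The plan is to prove the iterated doubling property by induction on $N$, using Lemma~\ref{lem:doubling-0} as both the base case ($N=1$) and the inductive step after a kinetic rescaling. The equation \eqref{eq:main} is invariant under the kinetic scaling $(x,v,t)\mapsto(\sigma^3 x,\sigma v,\sigma^2 t)$, and the cylinders $Q^k$ are designed precisely so that successive pairs $(Q^k,Q^{k+1})$ are images of $(Q^0,Q^1)$ under this scaling with $\sigma=2^k$ and a suitable time shift.

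For the inductive step $k\to k+1$, I would introduce the rescaled function
\[
\tilde f_k(x,v,t) := f\bigl(2^{3k}x,\ 2^k v,\ 4^k t + T_k\bigr).
\]
Using $R_k=(\omega/4)2^k$ together with $T_k-T_{k-1}=4^k$ and $T_{k+1}-T_k=4^{k+1}$, a direct check shows that this change of variables sends $Q^0$ bijectively onto $Q^k$ and $Q^1$ bijectively onto $Q^{k+1}$. Moreover, the ambient domain $B_8\times B_2\times(-1,4]$ required by Lemma~\ref{lem:doubling-0} for $\tilde f_k$ corresponds to $B_{2^{3(k+1)}}\times B_{2^{k+1}}\times(T_{k-1},T_{k+1}]$ for $f$, and the union of these over $k=0,\ldots,N-1$ is exactly $B_{2^{3N}}\times B_{2^N}\times(-1,T_N]$, matching the hypothesis of the lemma.

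One then verifies that $\tilde f_k$ is a non-negative super-solution of an equation of the form \eqref{eq:main} on $B_8\times B_2\times(-1,4]$, with diffusion matrix satisfying the same ellipticity bounds $\lambda I\le\tilde A_k\le\Lambda I$ and a non-negative source. Lemma~\ref{lem:doubling-0} then yields $\inf_{Q^1}\tilde f_k\ge\mathfrak h\inf_{Q^0}\tilde f_k$, which translates under the change of variables to $\inf_{Q^{k+1}}f\ge\mathfrak h\inf_{Q^k}f$. Telescoping these inequalities for $k=0,1,\ldots,N-1$ gives the claimed bound $\inf_{Q^N}f\ge\mathfrak h^N\inf_{Q^0}f$.

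The principal technical subtlety I expect is the behavior of the drift under the kinetic rescaling: a direct computation shows that $B$ becomes $2^k B$ at step $k$, inflating its $L^\infty$ bound from $\Lambda$ to $2^k\Lambda$. To preserve a truly universal constant $\mathfrak h$ in each inductive step, one must verify that the constants produced by Lemma~\ref{lem:doubling-0} are sufficiently robust with respect to the drift bound---which is plausible since $B\cdot\nabla_v f$ is a lower-order term absorbable by Cauchy--Schwarz inside the energy estimate of Lemma~\ref{lem:caccio} without degrading the universal constants---or else incorporate a Galilean change of variables at each step that locally removes the drift before invoking the base lemma. This is the crucial checking that the phrase ``applying iteratively the previous lemma'' elides, but is the main obstacle in a careful write-up.
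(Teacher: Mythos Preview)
Your approach is exactly what the paper intends: the proof is in fact omitted there, with the single sentence ``Applying iteratively the previous lemma, we obtain straightforwardly the following lemma whose proof is omitted.'' Your rescaling $\tilde f_k(x,v,t)=f(2^{3k}x,2^kv,4^kt+T_k)$ and the verification that it carries $(Q^0,Q^1)$ onto $(Q^k,Q^{k+1})$ are correct.

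The drift issue you flag is genuine, and neither of your two suggested fixes resolves it. The constant $\mathfrak h$ in Lemma~\ref{lem:doubling-0} depends on the bound $|B|\le\Lambda$ through the compactness argument of Lemma~\ref{lem:isoperim} and through Theorem~\ref{thm:sup-sub}; this dependence is not removed by absorbing $B\cdot\nabla_v f$ via Cauchy--Schwarz in the energy estimate, and a Galilean shift only kills a \emph{constant} drift, not a general $B(x,v,t)$. So at face value the $k$-th step yields $\mathfrak h_k=\mathfrak h(d,\lambda,2^k\Lambda)$ rather than a fixed $\mathfrak h$, and Lemma~\ref{lem:doubling} as stated is slightly informal.

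The actual resolution is contextual. In its only use (Proposition~\ref{prop:propagation}), the lemma is applied to $f$ after translation by $z$ and rescaling by a small parameter $r$, so the relevant super-solution carries drift $rB$, and the $k$-th iterate sees drift $r\,2^kB$. The choice of $N$ there is constrained by $4^{N-1}\le (t_\infty-t)/r^2\le (1/2+R^2)/r^2$, which forces $r\,2^N\le 2\sqrt{1/2+R^2}$. Hence the effective drift bound never exceeds a fixed universal multiple of $\Lambda$ throughout the iteration, and a single universal $\mathfrak h$ suffices. In a careful write-up one would either restate Lemma~\ref{lem:doubling} directly for the scaled cylinders $Q^k_r(z)\subset Q_1$, or add the hypothesis that the drift satisfies $2^N|B|\le\Lambda$ on the large domain.
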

\begin{remark}
  In \cite{im-book}, a measure estimate is also applied iteratively to
  prove a Harnack inequality for fully nonlinear parabolic equations
  in non-divergence form.
\end{remark}
We can now prove Proposition~\ref{prop:propagation}. 
\begin{proof}[Proof of Proposition~\ref{prop:propagation}]
In the following proof, we need iterated cylinders that are not centered
at the origin and with arbitrary radius. 
\[ Q^k_r(z):= \mathcal{T}_z \big( r Q^k \big). \]
The cylinder $Q^k$ is first scaled by $r$ (this is $rQ^k$) and then centered
around $z$ (this is $\mathcal{T}_z \big( r Q^k \big)$).

Let $z_\infty \in Q^+$ be such that $\min_{Q^+} f = f(z_\infty)$.
\begin{lemma}\label{lem:tech}
There exist $R$, $\Delta$, $r_0$ (small, universal)  such that
\begin{enumerate}
\item[a)] for all $r \in (0,r_0)$ and $z \in Q^-$, the iterated
  cylinders $Q^k_r(z)$ ($k \in \N$) which are included in
  $\{ t \le 0 \}$ are in fact included in $Q_1(0)$;
\item[b)]  the union of the iterated cylinders
  $\bigcup_{k=1}^{+\infty} Q^k_r(z)$ contains $Q^+$.
\end{enumerate}
\end{lemma}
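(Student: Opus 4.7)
The plan is to unpack the definition of $Q^k_r(z) = \mathcal{T}_z(rQ^k)$ in coordinates and then reduce both (a) and (b) to size inequalities for the product $rR_k$, which is controlled by $\sqrt{s}$ where $s = t - t_0$. Write $z = (x_0, v_0, t_0)$ with $|x_0| < R^3$, $|v_0| < R$, $t_0 \in (-\Delta - R^2, -\Delta]$, and note that a point in $Q^k_r(z)$ has the form $(x_0 + y + s v_0,\, v_0 + w,\, t_0 + s)$ with $|y| < r^3 R_k^3$, $|w| < r R_k$, and $s \in (r^2 T_{k-1}, r^2 T_k]$. Since $T_k = \tfrac43(4^k - 1)$ and $R_k^2 = (\omega/4)^2 \cdot 4^k$, one has the key identity $r^2 R_k^2 = (\omega/4)^2(\tfrac34 r^2 T_k + r^2)$, so the magnitude of $rR_k$ is of the same order as $\sqrt{r^2 T_k}$.

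For (a), suppose $Q^k_r(z) \subset \{t \le 0\}$. Then $t_0 + r^2 T_k \le 0$, hence $r^2 T_k \le |t_0| \le \Delta + R^2$. The identity above yields
\[
r R_k \le \tfrac{\omega}{4}\sqrt{\tfrac{3(\Delta + R^2)}{4} + r^2}\,,
\]
which is small whenever $R, \Delta, r_0$ are small. Consequently $|v| < R + r R_k < 1$, $|x| \le R^3 + (r R_k)^3 + r^2 T_k\,|v_0| \le R^3 + (rR_k)^3 + (\Delta+R^2)R < 1$, and $t \ge t_0 > -(\Delta + R^2) > -1$. For $k = 0$ the cylinder shrinks with $r$ and the check is trivial.

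For (b), take $(x,v,t) \in Q^+ = B_{R^3} \times B_R \times (-R^2, 0]$ and set $s := t - t_0 \in (\Delta - R^2, \Delta + R^2)$. Provided $\Delta > R^2$, this $s$ is strictly positive, so a unique $k \ge 1$ satisfies $s \in (r^2 T_{k-1}, r^2 T_k]$ (the intervals partition $(0,\infty)$). Writing $(y, w) := (x - x_0 - s v_0,\, v - v_0)$, we need $|w| < r R_k$ and $|y| < r^3 R_k^3$. From $r^2 T_k \ge s$ we obtain $r^2 R_k^2 \ge (\omega/4)^2 \cdot \tfrac{3s}{4}$, i.e.\ $r R_k \ge \tfrac{\omega\sqrt 3}{8}\sqrt{s} \ge \tfrac{\omega\sqrt 3}{8}\sqrt{\Delta - R^2}$. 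Since $|w| \le |v| + |v_0| < 2R$, the condition $|w| < rR_k$ is ensured by $2R < \tfrac{\omega\sqrt 3}{8}\sqrt{\Delta - R^2}$. Likewise $|y| \le 2R^3 + (\Delta + R^2)R$, which must be dominated by $(rR_k)^3 \ge \bigl(\tfrac{\omega\sqrt 3}{8}\bigr)^3 (\Delta - R^2)^{3/2}$; again this reduces to $R$ being small compared to $\sqrt\Delta$.

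The parameters are therefore chosen in the order $\Delta \in (0,1)$ small, then $R$ small enough that $R \ll \sqrt\Delta$ (so that both quantitative conditions above hold with room to spare, and so $Q^\pm \subset Q_1$ are disjoint), and finally $r_0$ small (which only enters as a lower-order correction in the $rR_k$ bound for (a)). The main difficulty is precisely to spot that the same universal ratio $rR_k \asymp \sqrt{r^2 T_k} \asymp \sqrt{s}$ governs both directions of the argument: for (a) it prevents the cylinder from escaping $Q_1$ in $v$ and $x$ when pinned from above by $r^2 T_k \le \Delta + R^2$, while for (b) it must dominate the separation $\lesssim R$ between $Q^\pm$, which forces the scaling relation $R \ll \sqrt\Delta$.
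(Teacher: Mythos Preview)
Your proof is correct and rests on the same idea as the paper's: both exploit the relation $R_k^2 \asymp T_k$, so that the spatial radius $rR_k$ of $Q^k_r(z)$ is comparable to $\sqrt{s}$ where $s$ is the time shift. The paper packages this by sandwiching $\bigcup_k Q^k$ between two continuous ``paraboloid'' regions $\mathcal P^\pm$ and then verifies the containments via $(z \circ r\mathcal P^+)\cap\{t\le 0\}\subset Q_1$ for~(a) and $z^{-1}\circ Q^+ \subset r\mathcal P^-$ for~(b); you bypass these auxiliary objects and work directly with the discrete cylinders through the exact identity $R_k^2 = (\omega/4)^2\bigl(\tfrac34 T_k + 1\bigr)$. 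Your direct route is marginally cleaner and in fact yields the sharper constraint $R \ll \sqrt\Delta$ for part~(b), whereas the paper's paraboloid bookkeeping (choosing a single $\rho$ with $\rho^3 \ge |y|$ to simultaneously control both $|y|$ and $|w|$) leads to the cruder requirement $\Delta \gtrsim R^{1/3}$; either way the constants are universal and the application is unaffected.
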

The proof is elementary but tedious. It is given in Appendix. 

Applying Lemma~\ref{lem:doubling}, we get 
\[ \inf_{\Qel_r (z)} f \le \mathfrak h^{-N} \inf_{Q^N_r(z)} f \le
\mathfrak h^{-N} \min_{Q^+} f\]
with $N$ such that $z_\infty \in Q^N_r(z)$, i.e.
$r^{-1} (z^{-1} \circ z_\infty) \in Q^N$.  In particular,
$r^{-2}(t_\infty - t) \in [T^{N-1},T^N]$.  Since $z_\infty \in Q^+$
and $z \in Q^-$, we know that
\[ 4^{N-1} \le T^{N-1} \le 
\frac{t_\infty - t}{r^2} \le \frac{1/2+R^2}{r^2}.\]
In particular,
\[ \mathfrak h^{-N} \le \left( \frac{1/2+R^2}{4}  \right)^{\frac{q}2} r^{-q} \]
where $q = -\ln \gamma / \ln 2>0$. 
We get the desired inequality  with $C_{\text{pm}} = ((1/2+R^2)/4)^{\frac{q}2}$. 
The proof of the proposition is thus complete. 
\end{proof}


\subsection{Proof of the Harnack inequality}

We can now turn to the proof of Theorem~\ref{thm:harnack}.
\begin{proof}[Proof of Theorem~\ref{thm:harnack}]
  We first remark that replacing $f(x,v,t)$ with $f(x,v,t) + \|s\|_{L^\infty} t$ if
  necessary, we can assume that $s \ge 0$. Dividing $f$ by
  $2 \beta^{-1} \|s\|_{L^\infty}$ if necessary, we can assume that
  $\|s\|_{L^\infty} = \beta /2$ (if $s \not \equiv 0$). 

  We are going to find a universal constant $C=C_H$ such that
  \eqref{eq:harnack} cannot hold false. In other words, we
are going to find a universal $C_H$ such that 
\begin{equation}\label{eq:mM}
 m + 1 \le C_H M  
\end{equation}
entails a contradiction where
\[ M :=\sup_{Q^-} f= f (z_0) \quad \text{ and } \quad m :=\inf_{Q^+}
f= f (z_{\infty})\]
for some $z_0 \in Q^-$ and $z_\infty\in Q^+$. 
We used here the fact that $u$ is (H\"older) continuous.

Our goal is to construct by induction a sequence $(z_k)_{k \ge 0}$ in
$Q^-[1]$ (we recall that
$Q^- \subset Q^- [1] \subset Q^-[2] \subset Q_1$, see
Figure~\ref{cylinders}) such that
\begin{equation}\label{eq:fk-bis}
f (z_k) \ge (1 -\delta')^{-k} M
\end{equation}
for some universal $\delta' \in (0,1)$. This implies in particular
that $f(z_k) \to +\infty$ as $k \to +\infty$ which is absurd since $f$
is bounded in $Q^-$.

Remark first that \eqref{eq:fk-bis} holds true for $k=0$. Let us assume that 
we already constructed $z_0,\dots,z_k$ and let us construct $z_{k+1}$. 
Let $z_k = (x_k,v_k,t_k)$. We choose $r_k >0$ such that 
\begin{equation}\label{eq:rk-choice}
 f(z_k ) = r_k^{-2q} m 
\end{equation}
where $q$ is given by Proposition~\ref{prop:propagation}. Inequality
\eqref{eq:mM} and the induction hypothesis \eqref{eq:fk-bis} imply
\begin{equation}\label{eq:rk}
 r_k^{2q} \le C_H(1-\delta')^k . 
\end{equation}
\bigskip

From the decrease of oscillation  (Proposition~\ref{prop:osc-decrease-true}), we know 
that 
\[ 1 + \osc_{Q_{r_k}} f \ge (1-\delta)^{-1} \osc_{q_k} f \]
(recall $2 \beta^{-1}\|s\|_{L^\infty}=1$)
with 
\[
Q_k = Q_{r_k}(z_k) \quad \text{ and } \quad q_k = Q_{\omega r_k/4} (z_k).
\]
 In particular, $z_k \in q_k$.  Let $z_{k+1} \in Q_k$ be such
that
\[ \max_{Q_k} f = f(z_{k+1}) .\]
Then we get 
\begin{equation}\label{eq:fk+1}
1 + f(z_{k+1}) \ge (1-\delta)^{-1} \left(f(z_k) - \min_{q_k} f \right). 
\end{equation} 

Recall that $z_k \in Q^-[1]$.  Choosing $C_H$ small, we can ensure
through \eqref{eq:rk} that $Q_{r_k}(z_k) \subset Q^-[2]$.  We
also remark that
\[ q_k \supset \Qel_{(\omega/4)^2 r_k} (z_k).\]
We thus can apply Proposition~\ref{prop:propagation}  and get
\[ \min_{q_k} f \le \min_{\Qel_{(\omega/4)^2 r_k} (z_k)} f  \le \tilde C_{\text{pm}}  r_k^{-q} m\]
with $\tilde C_{\text{pm}} = C_{\text{pm}} (4 /\omega)^q$. 
The use of \eqref{eq:rk-choice} in the previous inequality yields
\begin{equation}\label{eq:mink}
 \min_{q_k} f \le \tilde C_{\text{pm}} r_k^q f(z_k) \le \tilde 
C_{\text{pm}} \sqrt{C_H} f(z_k).
\end{equation}
Now combining \eqref{eq:fk+1} and \eqref{eq:mink}, we get
\[ 1+ f(z_{k+1}) \ge (1-\delta)^{-1} (1- \tilde C_{\text{pm}} \sqrt{C_H}) f(z_k). \]
Use next that $1 \le C_H M$ (this is a consequence of \eqref{eq:mM}) 
and the induction hypothesis and get
\begin{align*}
 f(z_{k+1}) & \ge (1-\delta)^{-1} (1- \tilde C_{\text{pm}} \sqrt{C_H}) (1-\delta')^{-k} M - C_H M\\
& \ge \mathfrak j (1-\delta')^{-k} M. 
\end{align*}
with
\[\mathfrak j = (1-\delta)^{-1} (1- \tilde C_{\text{pm}} \sqrt{C_H}) - C_H.\]
We thus choose $\delta'$ such that 
\[ (1-\delta')^{-1} = \mathfrak j \]
and we can choose $C_H$ small enough so that $\delta' \in (0,1)$. 
In particular we get 
\[ f(z_{k+1}) \ge (1-\delta')^{-k-1} M\]
which is the desired inequality.
\bigskip

We are left with proving that the sequence $\{ z_k \}$ 
stays in $Q^-[1]$. The fact that $z_{k+1}$ lies in $Q_{r_k} (z_k) = \mathcal{T}_{z_k} (Q_{r_k}  (0))$. 
This implies in particular that $|v_{k+1}-v_k|\le r_k$ which in turn yields
\[ 
|v_k -v_0| \le \sum_{l\ge 0} r_l \le C_H^{1/(2q)} \sum_{l \ge 0} (1-\delta')^{\frac{k}{2q}} 
= \frac{C_H^{1/(2q)}}{1-(1-\delta')^{1/(2q)}}.
 \]
 Using now that the fact that $\delta'$ is explicitely given as a
 function of $\delta$ and $C_H$ (see above), we conclude that
 $|v_k-v_0|$ can be arbitrarily small uniformly in $k$. We
 can argue in the same spirit for $|x_k-x_0|$ and $|t_k-t_0|$. Since
 $z_0 \in Q^-$, we conclude that we can indeed ensure that $z_k$ lies
 in $Q^-[1]$. The proof of the theorem is now complete.
\end{proof}

\section{Local gain of regularity for sub-solutions}
\label{sec:local-gain-regul}

In this section, we investigate the regularity of \emph{sub-solutions}
to Eq.~\eqref{eq:main} beyond the gain of integrability proved
above. Observe that, on the one hand, Theorem~\ref{thm:gain} applies
to sub-solutions but only concludes to the gain of integrability. On
the other hand, Theorem~\ref{thm:gain-diff} proves a gain of Sobolev
regularity but only applies to \emph{solutions} (not
sub-solutions). It might seem, at first sight, that the lack of
ellipticity in all directions means the gain of regularity of
solutions is false, since in the elliptic and parabolic case it is
entirely based on the energy estimate. However we show here that,
using the local upper bound proved above by the De~Giorgi--Moser
iteration, and refined averaging lemmas, this result still holds in
essence for our equation, even though the gain of regularity is only
$H^s$ with $s>0$ small. We prove the following result:
\begin{theorem}[Gain of regularity for non-negative
  sub-solutions]\label{thm:gain-diff-sub}
  Consider $z_0 \in \R^{2d+1}$ and two cylinders
  $\Qint :=Q_{r_1}(z_0)$ and $\Qext:=Q_{r_0}(z_0)$ with
  $0< r_1 < r_0$.  Then there is some $\mathfrak s \in (0,1/3)$ so that any
  weak non-negative sub-solution $f$ of \eqref{eq:main} in $\Qext$
  satisfies
\begin{equation}
  \label{eq:gain-diff-sub}
  \n{f}_{H^{\mathfrak s}_{x,v,t} (\Qint)} \le C 
\left(  \n{f}_{L^2(\Qext)} + \n{s}_{L^\infty(\Qext)}  \right)
\end{equation}
with $C =C(d,\lambda,\Lambda,\Qext,\Qint)$. 
\end{theorem}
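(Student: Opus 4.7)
The plan is to combine the local $L^\infty$ bound from Theorem~\ref{thm:sup-sub} with a version of Bouchut's velocity averaging estimate \cite{bouchut} that tolerates a measure-valued source term; the trade-off is that the gain of regularity is only some small $\mathfrak{s} \in (0,1/3)$ instead of the $1/3$ obtained in Theorem~\ref{thm:gain-diff} for sign-changing solutions. The $L^\infty$ control from the previous section is what lets us convert the one-sided information "sub-solution" into a quantitative bound on the defect measure and thus reduce to a genuine equation.

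First, I would fix an intermediate radius $r_m \in (r_1, r_0)$ and apply Theorem~\ref{thm:sup-sub} (after a standard normalization by $\|f\|_{L^2(\Qext)} + \|s\|_{L^\infty(\Qext)}$) to obtain
\[ \|f\|_{L^\infty(Q_{r_m}(z_0))} \le C\left( \|f\|_{L^2(\Qext)} + \|s\|_{L^\infty(\Qext)}\right). \]
The energy estimate of Lemma~\ref{lem:caccio} then also controls $\nabla_v f$ in $L^2$ on a slightly smaller cylinder. Since $f$ is a non-negative sub-solution, there exists a non-negative Radon measure $\mu$ on $\Qext$ such that \eqref{eq:main} holds with equality after subtracting $\mu$ on the right-hand side. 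Testing this equality against $\Psi^2$ with $\Psi$ a smooth cutoff supported in $Q_{r_m}(z_0)$ and integrating by parts, the $L^\infty$ bound on $f$, the $L^2$ bound on $\nabla_v f$, and the $L^\infty$ bound on $s$ together control $\int \Psi^2 \dd \mu$. Thus $\mu$ is locally a finite non-negative measure whose total mass on any compact subset of $\Qext$ is bounded by $\|f\|_{L^2(\Qext)} + \|s\|_{L^\infty(\Qext)}$, up to universal constants.

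Next, I would truncate as in the proof of Theorems~\ref{thm:gain} and~\ref{thm:gain-diff}: with cutoffs $\chi_{1/2}$ and $\chi_1$ chosen as there, the function $f_1 := f \chi_1$ satisfies in $\R^{2d+1}$ an equation of the form
\[ (\partial_t + v \cdot \nabla_x) f_1 = \nabla_v \cdot (A \nabla_v f_1) + \nabla_v\cdot H_1 + H_0 - \chi_1 \mu, \]
where $H_0, H_1 \in L^2(\R^{2d+1})$ are compactly supported and controlled in $L^2$ exactly as in the proofs of Theorems~\ref{thm:gain}--\ref{thm:gain-diff}, and $\chi_1 \mu$ is a compactly supported non-negative finite measure controlled by the previous step. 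The only difference with the proof of Theorem~\ref{thm:gain-diff} is the presence of the defect measure on the right-hand side: the $L^2$ pieces alone would give $H^{1/3}$ regularity via Lemma~\ref{lem:global}.

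The main obstacle is the final step: applying a velocity averaging estimate that accommodates this measure source and still yields a positive fractional gain. Since $\chi_1 \mu$ is a finite measure with compact support, Sobolev duality places it in $H^{-\sigma}_{x,v,t}$ for any $\sigma > (2d+1)/2$. One then invokes the transport-diffusion averaging estimate of \cite{bouchut} (in the variant that permits the source to belong to a negative Sobolev space) combined with the $L^2$ control of $f_1$ and $\nabla_v f_1$, and interpolates between the $H^{1/3}$ gain one would obtain from the $L^2$ pieces and the much weaker bound produced by the measure piece. Tuning the interpolation parameter yields $f_1 \in H^{\mathfrak{s}}_{x,v,t}(\R^{2d+1})$ for some universal $\mathfrak{s} \in (0, 1/3)$ depending only on dimension and ellipticity constants, with norm bounded by the right-hand side of \eqref{eq:gain-diff-sub}; restricting to $\Qint$ yields the result. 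Tracking the explicit exponent emerging from the trade-off between the roughness of the measure (negative Sobolev scale) and the $H^1$ regularity in $v$ is the technically delicate part and is responsible for the smallness of $\mathfrak{s}$.
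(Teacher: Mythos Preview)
Your outline matches the paper's up to and including the localization: obtain an $L^\infty$ bound on an intermediate cylinder via Theorem~\ref{thm:sup-sub}, introduce the defect measure $\mu\ge 0$ so that the sub-solution becomes a genuine equation, control the total variation of $\mu$ by testing against a cutoff, and reduce to a compactly supported problem with $L^2$ data $H_0,H_1$ plus the finite measure. The divergence is in the last step, and there your description is not quite a proof.

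Placing the measure in $H^{-\sigma}_{x,v,t}$ with $\sigma>(2d+1)/2$ and then invoking ``a variant of Bouchut that permits negative Sobolev source'' is too vague: Bouchut's Theorem~1.3 requires the source to be of the specific form $(1-\Delta_{x,t})^{r/2}(1-\Delta_v)^{m/2} h$ with $h\in L^p$, and the gain of $x$-regularity degrades with $m$ and $r$. If you encode the measure as $(1-\Delta_{x,v,t})^{\sigma/2}h$ with $h\in L^2$ and $\sigma>(2d+1)/2$, the required $m$ is so large that the averaging lemma yields no positive gain. Moreover, your proposed ``interpolation between the $H^{1/3}$ gain from the $L^2$ pieces and the weaker bound from the measure piece'' is not a valid operation: splitting the source gives two \emph{different} functions whose sum is $f_1$, and the sum inherits only the worst regularity. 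The paper avoids both issues by a different (and sharper) representation: since finite measures embed into $W^{-1/2,q^*}_{x,v,t}$ for $q^*$ the conjugate of any $q>4d+2$, one can write $\tilde\mu=(1-\Delta_{x,t})^{1/4}(1-\Delta_v)h$ with $h\in L^{q^*}$, $q^*$ close to $1$. This fits Bouchut with $r=\tfrac12$, $m=2$, $p=q^*$, yielding $g\in W^{1/8,q^*}_{x,t}L^{q^*}_v$. The genuine interpolation step is then between this estimate and the $L^\infty$ bound on $g$ itself (this is where the De~Giorgi--Moser bound is used a second time), producing $g\in H^{\mathfrak s}_{x,t}L^2_v$ for some small $\mathfrak s$; combining with the energy estimate $g\in L^2_{x,t}H^1_v$ gives $g\in H^{\mathfrak s}_{x,v,t}$.
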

\begin{proof}[Proof of Theorem~\ref{thm:gain-diff-sub}]
  We define $\Q12$ in between $\Qint$ and $\Qext$ and the same
  truncation functions as before. Theorem~\ref{thm:sup-sub} implies that 
  \begin{equation*}
    \n{f}_{L^\infty (\Q12)} \lesssim \n{f}_{L^2(\Qext)} + \n{s}_{L^\infty(\Qext)}.
  \end{equation*}

We want to apply \cite[Theorem~1.3]{bouchut} on $f$ in $\Q12$. However
since $f$ is only a sub-solution it satisfies the equation 
\begin{equation*}
  \partial_t f + v \cdot \nabla_x f = \nabla_v \cdot \left(A \nabla_v
    f  \right) + B \cdot \nabla_v f + s - \mu \qquad \text{ in } \Qext
\end{equation*}
where we have included the defect non-negative measure $\mu \ge 0$
accounting for the inequation. We can now repeat the reasoning from 
the proof of Lemma~\ref{lem:isoperim} and reduce to
the case
\begin{equation*}
  \partial_t g + v \cdot \nabla_x g = \nabla_v \cdot \left(A \nabla_v
    g  \right) + \nabla_v \cdot H_1 + H_0 - \tilde \mu \qquad \text{ in } \R^{2d+1}
\end{equation*}
with $g \equiv f$ in $\Qint$ and $g$, the measure $\tilde \mu \ge 0$, $H_0$ and
$H_1$ supported in $\Q12$, and with $g$, $\nabla_v g$, $H_0$ and $H_1$
bounded in $L^2$ on $\Q12$. Then by integrating in $x,v,t$ we deduce
that $\tilde \mu$ has bounded variation in terms of the previous
bounds.  Since for $q > (4d+2)$, the space $W^{\frac12,q}_{x,v,t}$
embeds into continuous bounded functions of $x,v,t$, we deduce that
the space of measures is included in $W^{-\frac12,q^*}_{x,v,t}$ and
therefore
\begin{equation}\label{eq:mug}
  \tilde \mu = \left( 1- \Delta_{x,t} \right)^{\frac14} \left( 1 -
    \Delta_v \right) h \quad \mbox{
    with } \quad h \in L^{q^*}(\Q12)
\end{equation}
and the bound on the $L^{q^*}(\Q12)$ depends on the previous bounds
above, and where $q^* = 1/(1-1/q)$ is the conjugate exponent of
$q$. Observe that $q^*$ is striclty smaller than $2$ and close to one,
for instance $q^* \in (1,14/13)$ in dimension $d=3$. We then apply
\cite[Theorem~1.3]{bouchut} with $\kappa =1$, $r=\frac12$, $m=2$,
$\beta=1$, $p=q^*$: we deduce that $g$ belongs to
$W^{\frac18,p}_{x,t}L^p_v$ (observe that we use a full Laplacian
derivative in $v$ in Eq.~\eqref{eq:mug} in order to be in the
framework of \cite[Theorem~1.3]{bouchut}, even though
$(1 - \Delta_v )^{1/4}$
would have been enough for the purpose of having $h \in L^{q^*}$). By
interpolation with the $L^\infty$ estimate, we obtain then that
$g \in H^{\mathfrak s}_{x,t} L^2_v$ for some
$\mathfrak s \in (0,\frac18)$ small enough. Finally, we combine the
latter estimate with the energy estimate $g \in L^2_{x,t} H^1_v$ we
conclude with $g \in H^{\mathfrak s}_{x,v,t}$. Since the truncation
function is equal to one on the smaller cube $\Qint$, it translates
into $f \in H^{\mathfrak s}_{x,v,t}$ on $\Qint$ and concludes the
proof.
\end{proof}

\section{Gain of integrability of the velocity gradient}
\label{sec:l2+eps}

This section is devoted to the proof of the following theorem.
\begin{theorem}[Gain of integrability for $\nabla_v f$]\label{thm:l2+eps}
  Let $f$ be a solution of \eqref{eq:main} without lower order
terms
  ($B \equiv 0$ and $s\equiv 0$) in some cylinder $Q_{r_0} (z_0)$. There exists a
  universal $\eps >0$ such that for all $Q[i] = Q_{r_i} (z_0)$,
  $i=0,1,2$ with $r_2<r_1 < r_0$, $\nabla_v f \in L^{2+\eps}(Q_2)$
\begin{equation}\label{eq:l2+eps}
\int_{Q[2]} |\nabla_v f|^{2+\eps} \dd z \le 
C \left( \int_{Q[1]} |\nabla_v f |^2 \dd z \right)^{\frac{2+\eps}2}
\end{equation}
with $C = C(d,\lambda,\Lambda,Q_2,\Qint,\Qext)$. 
\end{theorem}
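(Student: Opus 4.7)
\bigskip
\noindent\textbf{Proof proposal.} The plan is a Gehring-type self-improvement: derive a reverse Hölder inequality for $|\nabla_v f|$ on the slanted cylinders $Q_r(z_0)$, and then invoke Gehring's lemma (in its Giaquinta--Modica form, adapted to the kinetic setting) to upgrade $L^2$ integrability of $\nabla_v f$ to $L^{2+\eps}$. The scheme parallels the classical parabolic treatment of Giaquinta--Struwe, with two key ingredients: a Caccioppoli inequality and a hypoelliptic Poincaré--Sobolev inequality.

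Since $B \equiv 0$ and $s \equiv 0$, every constant $c \in \R$ solves \eqref{eq:main}, so $f - c$ is again a weak solution. Applying the local energy estimate of Lemma~\ref{lem:caccio} to $f - c$ between nested slanted cylinders $Q_{r'}(z_0) \subset Q_r(z_0) \subset Q[1]$ yields the Caccioppoli inequality
\[
\int_{Q_{r'}(z_0)} |\nabla_v f|^2 \dd z \le \frac{\bar C}{(r-r')^2} \int_{Q_r(z_0)} (f - c)^2 \dd z .
\]
The crucial new ingredient is a hypoelliptic Poincaré--Sobolev inequality of the form: for some universal $\mathfrak{p} \in (1,2)$ and a suitable constant $c_{z_0,r}$,
\[
\left( |Q_r(z_0)|^{-1} \int_{Q_r(z_0)} (f - c_{z_0,r})^2 \dd z \right)^{1/2} \le C r \left( |Q_{2r}(z_0)|^{-1} \int_{Q_{2r}(z_0)} |\nabla_v f|^{\mathfrak{p}} \dd z \right)^{1/\mathfrak{p}} .
\]
To derive it, one truncates $f - c_{z_0,r}$ (as in the proofs of Theorems~\ref{thm:gain} and \ref{thm:gain-diff}), reduces to a globally defined function satisfying an equation whose source terms are controlled by $\|\nabla_v f\|_{L^{\mathfrak{p}}}$, applies Bouchut's averaging lemma \cite{bouchut} in $L^{\mathfrak{p}}$ to gain a fractional amount of $(x,t)$-regularity, and invokes the Sobolev embedding on $\R^{2d+1}$ with the kinetic homogeneous dimension $4d+2$ coming from the scaling $(x,v,t)\mapsto(r^3 x,rv,r^2 t)$ in order to reach integrability $2$. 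Since $\mathfrak{p} < 2$, the gain of regularity is weaker than the $1/3$ of Theorem~\ref{thm:gain-diff}, but it remains strictly positive, which is all that is needed.

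Combining Caccioppoli with the Poincaré--Sobolev and rescaling produces a reverse Hölder inequality relating averages of $|\nabla_v f|^2$ over $Q_{r/2}(z_0)$ to averages of $|\nabla_v f|^{\mathfrak{p}}$ over $Q_{2r}(z_0)$, valid for every admissible cylinder $Q_{2r}(z_0) \subset Q[1]$. One then applies the Gehring--Giaquinta--Modica lemma on the space of homogeneous type defined by the slanted cylinders together with the group law from \eqref{eq:transformation}: this space is doubling and supports Vitali-type covering lemmas, so the standard machinery applies and produces a universal $\eps > 0$ such that \eqref{eq:l2+eps} holds between $Q[1]$ and $Q[2]$.

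The main obstacle is the Poincaré--Sobolev step: one must verify that Bouchut's averaging estimate extends to $L^{\mathfrak{p}}$ source terms with $\mathfrak{p} < 2$ while preserving a strictly positive regularity gain, and that the resulting kinetic Sobolev exponent is strictly larger than $2$ so that the interpolation closes. A secondary technical point is to check that the slanted cylinders genuinely form a space of homogeneous type (doubling with a Vitali covering lemma) on which the Giaquinta--Modica machinery applies verbatim; this relies on the group structure introduced in Section~\ref{sec:plan-paper}.
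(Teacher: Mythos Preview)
Your overall strategy is correct and matches the paper's: establish a reverse H\"older inequality for $|\nabla_v f|$ on kinetic cylinders and then apply a Gehring--Giaquinta--Modica lemma. The paper, following Giaquinta--Struwe, also combines a Caccioppoli estimate with a Poincar\'e-type inequality and Bouchut's averaging estimate in $L^q$ for $q\in(1,2]$ (your $\mathfrak p$) to produce the reverse H\"older, and then invokes a Gehring lemma (Lemma~\ref{lem:gehring}).

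There are, however, two genuine implementation differences worth noting. First, the paper does \emph{not} work with the slanted cylinders $Q_r(z_0)$ and the Lie group structure you propose; it switches to the ``rectangular'' cylinders $\cQ_R(z_0)$, which are preserved by the scaling but not by $\mathcal T_{z_0}$, and adapts Giaquinta--Modica's covering argument directly to those. Your homogeneous-space route is a legitimate alternative and arguably more conceptual, but the paper sidesteps the verification of the doubling/Vitali structure by staying closer to the Euclidean geometry. Second, the paper does not subtract a single constant $c_{z_0,r}$; it uses \emph{time-dependent} weighted means $\tilde f_{2R}(t)$ (see Lemma~\ref{lem:caccio-bis}). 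This choice is what makes the Caccioppoli estimate \eqref{eq:caccio-mean} and the pointwise-in-time Poincar\'e bound \eqref{eq:poincare} close cleanly, and it leads to a reverse H\"older with an absorbable $\theta\fint|\nabla_v f|^2$ term rather than the pure form you sketch. In particular, the paper never proves a single clean kinetic Poincar\'e--Sobolev inequality of the type you state; instead it interpolates via a $\sup_t$ bound, H\"older in time, and the estimate $\int|D_x^{1/3}f|^q\lesssim\int|\nabla_v f|^q$ (equation~\eqref{eq:gain-x-sol-q}) obtained from Bouchut. Your formulation is cleaner if it can be made rigorous, but the paper's more hands-on argument avoids having to justify that a single constant suffices to control oscillation in all of $(x,v,t)$.
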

The proof follows along the lines of the one of
\cite[Theorem~2.1]{gs}. It consists in deriving an almost reverse
H\"older inequality which in turn implies the result thanks to the
analogous of \cite[Proposition~1.3]{gs}. The following
measure-theoretical lemma will be used as a black box in the proof of
Theorem~\ref{thm:l2+eps}. It implies the use of cylinders with different
shape: 
\[\cQ (z_0,r) = \{ z = (x,v,t) : |x_i -x_i^0| < r^3, |v_i-v_i^0| < r, -r^2 < t-t_0 \le 0\}\]
where $x=(x_1,\dots,x_d)$ and $v=(v_1,\dots,v_d)$. 
The scaling of the equation preserves this family of cylinders but not the
Lie group action $\mathcal{T}_z$. 
\begin{lemma}[A Gehring lemma]\label{lem:gehring}
  Let $g \ge 0$ in $\cQ$ such that there exists $q >1$ such that for all
  $z_0 \in \cQ$ and $R$ such that $\cQ_{4R} (z_0) \subset Q$, 
\[ \fint_{\cQ_R(z_0)} g^q \dd z \le b \left(\fint_{\cQ_{4R}(z_0)} g \dd z
\right)^q + \theta \fint_{\cQ_{4R}(z_0)} g^q \dd z \]
for some $\theta >0$. There exists $\theta_0 = \theta_0 (q,d)$ such that 
if $\theta < \theta_0$, then $g \in L^p_{\mathrm{loc}} (Q)$ for $p \in [q,q+\eps)$ and 
\[ \left( \fint_{\cQ_R} g^p \dd z \right)^{\frac1p} \le c_p \left(
  \fint_{\cQ_{4R}} g^q \dd z \right)^{\frac1q}, \]
the constants $\eps>0$ depends only on $b,q,\theta$ and dimension, and
$c_p$ further depends on $p$. 
\end{lemma}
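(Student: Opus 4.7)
The plan is to follow the classical Gehring--Giaquinta--Modica self-improving reverse H\"older scheme (as sketched in \cite{gs}) and to transpose it to the anisotropic cylinders $\cQ_r$. The only structural facts about $\cQ_r$ that enter the argument are the doubling identity $|\cQ_{4R}(z_0)| = 4^{4d+2} |\cQ_R(z_0)|$ and the Vitali-type covering property of the family $\{\cQ_r(z)\}$ for Lebesgue measure on $\R^{2d+1}$; both hold trivially because $\cQ_r$ is a product of cubes in $x$, cubes in $v$, and an interval in $t$. In particular, the localized maximal operator
\[ M^* h (z) := \sup_{R \, : \, \cQ_{4R}(z) \subset Q} \fint_{\cQ_R(z)} |h| \dd z' \]
is of weak type $(1,1)$ and strong type $(\sigma,\sigma)$ for every $\sigma > 1$, with constants depending only on the dimension.

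The first step is a Calder\'on--Zygmund stopping-time decomposition. I fix nested cylinders $\cQ^* \Subset \cQ^{**} \Subset Q$ and a threshold
\[ t_0 := \left( \bar C \fint_{\cQ^{**}} g^q \dd z \right)^{1/q} \]
with $\bar C$ large and universal. For each level $t \ge t_0$, the stopping-time argument produces a countable disjoint family $\{\cQ_{r_i}(z_i)\}_{i \in I}$ inside $\cQ^*$ such that
\[ t^q \le \fint_{\cQ_{r_i}(z_i)} g^q \dd z \le 4^{4d+2} \, t^q \quad \text{and} \quad g(z)^q \le t^q \text{ for a.e.\ } z \in \cQ^* \setminus \bigcup_i \cQ_{4 r_i}(z_i). \]

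The second step is to apply the reverse H\"older hypothesis on each $\cQ_{r_i}(z_i)$, to split the mean $\fint_{\cQ_{4 r_i}(z_i)} g$ into its contribution from the set $\{g \le t/K\}$ and from $\{g > t/K\}$ for a large universal parameter $K = K(b,q,d)$, and to sum over $i$. Using the bounded overlap of the enlarged family $\{\cQ_{4 r_i}(z_i)\}$ given by Vitali, this yields a good-$\lambda$ type inequality of the form
\[ \int_{\{g > t\} \cap \cQ^*} g^q \dd z \le \bigl( \eta(K) + C(b,q,d)\, \theta \bigr) \int_{\{g > t/K\} \cap \cQ^{**}} g^q \dd z, \qquad t \ge t_0, \]
where $\eta(K) \to 0$ as $K \to \infty$. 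Choosing $K$ large enough and then $\theta_0$ small enough, one forces the bracket on the right to be strictly smaller than, say, $1/2$.

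The final step multiplies this distributional inequality by $t^{p-q-1}$ and integrates for $t \in (t_0, +\infty)$. By the layer-cake formula this produces an estimate of the form
\[ \int_{\cQ^*} g^p \dd z \le \tfrac{1}{2} K^{p-q} \int_{\cQ^{**}} g^p \dd z + C(p,q,d) \, t_0^{p-q} \int_{\cQ^{**}} g^q \dd z, \]
and for $p - q$ small enough (so that $\tfrac{1}{2} K^{p-q} < 1$), one absorbs the $L^p$ term on the right into the left after a standard hole-filling iteration over a nested family of sub-cylinders interpolating between $\cQ^*$ and $\cQ^{**}$. The main technical point I anticipate --- and really the only delicate issue --- is that this absorption requires $\int g^p < \infty$ a priori, which is not part of the hypotheses. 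This is the classical obstacle in Gehring's lemma and is handled by the standard truncation trick: apply the entire argument to $g_N := \min(g,N)$, which inherits the reverse H\"older inequality with identical constants (up to a harmless error vanishing as $N \to \infty$), derive the $L^p$ estimate with constants independent of $N$, and conclude by monotone convergence. A final elementary covering then identifies the pair $(\cQ^*, \cQ^{**})$ with the pair $(\cQ_R, \cQ_{4R})$ in the statement.
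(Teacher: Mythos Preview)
Your proposal is correct and is precisely the adaptation the paper has in mind: the paper does not give a proof but simply refers to \cite[Proposition~5.1]{gm} and remarks that one need only replace Euclidean cubes by the anisotropic cylinders $\cQ_R$, which is exactly the Gehring--Giaquinta--Modica stopping-time scheme you outline. Your identification of the two structural ingredients (the doubling identity $|\cQ_{4R}|=4^{4d+2}|\cQ_R|$ and the Vitali covering property for the product cylinders) is the right justification that the classical argument transfers verbatim.
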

The proof of Lemma~\ref{lem:gehring} is an easy adaptation of the one
of \cite[Proposition~5.1]{gm}, by changing Euclidian cubes with
cylinders $\cQ_R$. \bigskip

The proof of Theorem~\ref{thm:l2+eps} is a consequence of some
estimates involving weighted means of the solution. Given $z_0 \in
\R^{2d+1}$, they are defined as follows
\[ 
\tilde{f}_{2R} (t) = \frac1{cR^{4d}} \int_{\R^{2d}} f(t,x,v) \chi_{2R} (x,v,t)
\dd x \dd v   
\]
(for some $c$ defined below) where $\chi_{2R}$ is a cut-off function such that 
\[ \chi_{2R} (x,v,t) = \prod_{i=1}^d\phi_{R^3}(x_i-x_i^0) \phi_R(v_i-v_i^0) \] with $\phi_R (a) =
\phi(a/R)$ for some $\phi$ such that $\sqrt{\phi} \in C^\infty(\R)$
and $\phi \equiv 1$ in $[-1,1]$ and $\supp \phi\subset [-2,2]$.   
We remark  that
$\chi_{2R} \equiv 1$ in $\cQ_R$ and $\chi_{2R} \equiv 0$ outside $\cQ_{2R}$. 
\begin{lemma} \label{lem:caccio-bis} Let $f$ be a solution
  of \eqref{eq:main} in $\cQ_0$. Then for $\cQ_{3R} (z_0) \subset
  \cQ_0$,
\begin{eqnarray}
\label{eq:caccio-mean}
 \int_{\cQ_R(z_0)} |\nabla_v f |^2 \dd z \le C R^{-2} \int_{\cQ_{2R}(z_0)}
  |f - \tilde{f}_{2R}|^2 \dd z \\
\label{eq:poincare}
 \sup_{t \in (t_0-R^2,t_0]} \int_{\cQ_R^t(z_0)} |f(t) - \tilde{f}_R
  (t)|^2 \dd x \dd v \le C  \int_{\cQ_{3R}(z_0)} |\nabla_v f|^2 \dd z
\end{eqnarray}
where $\cQ_R^t (z_0)=  \{ (x,v): (t,x,v) \in \cQ_R(z_0)\}$.
\end{lemma}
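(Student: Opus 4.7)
Both inequalities are invariant under translation in $(x,v,t)$, so I reduce to $z_0 = (x_0,v_0,t_0) = 0$, with the understanding that constants may depend on the outer domain $\cQ_0$ (in particular on the $L^\infty$-bound of $|v_0|$ there). Inequality \eqref{eq:caccio-mean} is a Caccioppoli energy estimate whose test function is tailored to exploit the weighted-mean structure of $\tilde f_{2R}$; inequality \eqref{eq:poincare} is a kinetic Poincar\'e estimate, in which characteristics convert $x$-oscillation of $f$ into control by $\nabla_v f$.

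For \eqref{eq:caccio-mean}, set $\phi := f - \tilde f_{2R}(t)$. Since $\tilde f_{2R}$ depends only on $t$, we have $\nabla_v \phi = \nabla_v f$, and $\phi$ solves
\[ (\partial_t + v \cdot \nabla_x) \phi = \nabla_v \cdot (A\nabla_v \phi) - \tilde f_{2R}'(t). \]
I pick a smooth time cutoff $\psi(t)$ with $\psi \equiv 1$ on $(-R^2,0]$ and $\supp \psi \subset (-(2R)^2,0]$, and test the equation for $\phi$ against $\phi \chi_{2R} \psi^2$. Integrating by parts in $v$ for the diffusion and in $(t,x)$ for the transport, the ellipticity of $A$ yields the good term $\lambda \int |\nabla_v \phi|^2 \chi_{2R} \psi^2$, while Young's inequality absorbs the $v$-commutator thanks to $|\nabla_v \chi_{2R}|^2 \lesssim R^{-2}\chi_{2R}$ (a consequence of $\sqrt{\phi} \in C^\infty$). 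The crucial cancellation is that
\[ -\int \psi^2 \tilde f_{2R}'(t) \left( \int_{\R^{2d}} \phi \chi_{2R} \dd x \dd v \right) \dd t = 0, \]
because $\int \phi \chi_{2R} \dd x \dd v \equiv 0$ in $t$ by the very definition of $\tilde f_{2R}$ as the $\chi_{2R}$-weighted mean. The remaining commutator $\int \phi^2 [\chi_{2R} \psi \psi' + \psi^2 v \cdot \nabla_x \chi_{2R}]$ is handled by splitting $v = v_0 + (v - v_0)$: the $(v-v_0)$-piece is $\lesssim R^{-2} \int \phi^2$ since $|v - v_0| \le 2R$ and $|\nabla_x \chi_{2R}| \lesssim R^{-3}$, and the $v_0$-piece is absorbed into the constant $C=C(\cQ_0,d,\lambda,\Lambda)$ via the uniform bound on $|v_0|$ in $\cQ_0$. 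Restricting the LHS to $\cQ_R \subset \{\chi_{2R} \psi^2 = 1\}$ concludes.

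For \eqref{eq:poincare}, I represent the deviation as a double average
\[ |f(t,x,v) - \tilde f_R(t)|^2 \lesssim R^{-4d} \int_{\R^{2d}} |f(t,x,v) - f(t,x',v')|^2 \chi_R(x',v') \dd x' \dd v', \]
and I control each pointwise difference by a line integral of $\nabla_v f$ along a connecting kinetic path. For each pair $(x',v'), (x,v) \in \cQ_R^t$, pick $s \sim R^2$ and construct a piecewise-linear velocity profile $\tau \in [0,s] \mapsto V(\tau)$ with $|V| \lesssim R$ and $X(\tau) := x' + \int_0^\tau V$, so that $(X(s), V(s)) = (x,v)$ and $(X(\tau), V(\tau), t-s+\tau) \in \cQ_{3R}$; the required controllability comes from the scale matching $|x - x'| \lesssim R^3 \sim sR$. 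Along any such path, the equation gives
\[ \tfrac{d}{d\tau} f(t-s+\tau, X(\tau), V(\tau)) = \nabla_v \cdot (A\nabla_v f) + \dot V(\tau) \cdot \nabla_v f; \]
averaging over a tube of nearby paths so as to integrate the second-order term by parts against a smooth $v$-kernel (thereby transferring one $\nabla_v$ onto the kernel and leaving a $\nabla_v f$ contribution only), one bounds each difference by an $L^2$-integral of $\nabla_v f$ over $\cQ_{3R}$. Fubini in the outer $(x',v')$-average, Cauchy-Schwarz, and a sup over $t \in (-R^2,0]$ then produces \eqref{eq:poincare}.

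The main obstacle is \eqref{eq:poincare}: since the RHS contains only $\nabla_v f$, any $x$-oscillation of $f$ must be accessed through the transport term, which forces a robust kinetic controllability argument with paths that remain confined to $\cQ_{3R}$, together with a careful integration by parts of the second-order operator against the tube-averaging kernel so that everything lands on $\nabla_v f$. The Caccioppoli estimate \eqref{eq:caccio-mean} is more routine once the orthogonality of $\phi$ to the weight $\chi_{2R}$ is exploited; the only bookkeeping subtlety is the treatment of the drift term $v_0 \cdot \nabla_x \chi_{2R}$, absorbed into the $\cQ_0$-dependent constant.
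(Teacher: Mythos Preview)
Your treatment of \eqref{eq:caccio-mean} is correct and essentially matches the paper's: test the equation for $f-\tilde f_{2R}$ against $2(f-\tilde f_{2R})\chi_{2R}\tau$ with a time cutoff $\tau$, and use that $\int (f-\tilde f_{2R})\chi_{2R}\,dx\,dv\equiv 0$ to kill the $\tilde f_{2R}'(t)$ contribution. (The paper sets $z_0=0$, $R=1$ and does not discuss the $v_0$-drift; your remark that the constant depends on $\cQ_0$ through a bound on $|v_0|$ is a fair bookkeeping point.)

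For \eqref{eq:poincare}, however, your trajectory argument has a real gap, and the paper takes a different route. Your path connects $(x',V(0),t-s)$ to $(x,v,t)$: because the time variable advances along characteristics, the line integral bounds $f(t,x,v)-f(t-s,x',V(0))$, \emph{not} the same-time difference $f(t,x,v)-f(t,x',v')$ needed to control $f(t)-\tilde f_R(t)$. Closing that gap requires at least a second leg (one with $\int_0^s V=0$ to return to $x'$ at time $t$), which you do not supply. Moreover, the ``tube averaging'' you invoke to move one $\nabla_v$ off $\nabla_v\cdot(A\nabla_v f)$ onto a smooth kernel needs a careful change-of-variables between endpoints and path parameters so that the resulting Jacobians do not spoil the scaling; as written, the sketch does not show how the $L^2$ bound by $\int_{\cQ_{3R}}|\nabla_v f|^2$ actually emerges.

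The paper's argument for \eqref{eq:poincare} instead recycles the energy computation used for \eqref{eq:caccio-mean}: retaining the boundary term gives
\[
\sup_{t}\int (f(t)-\tilde f_{2R}(t))^2 \chi_{2R}\le C\int_{\cQ_{2R}}(f-\tilde f_{2R})^2 .
\]
Since $\int (f-\tilde f_{2R})\chi_{2R}\,dx\,dv=0$, a (fractional) Poincar\'e inequality in $(x,v)$ yields
\(\int_{\cQ_{2R}}(f-\tilde f_{2R})^2\le C\int_{\cQ_{2R}}\big(|\nabla_v f|^2+|D_x^{1/3}f|^2\big)\).
The $x$-oscillation term is then absorbed by the hypoelliptic estimate
\(\int_{\cQ_{2R}}|D_x^{1/3}f|^2\le C\int_{\cQ_{3R}}|\nabla_v f|^2\),
which follows from Bouchut's averaging lemma applied to the equation with $B=0$, $s=0$ (this is where $\cQ_{3R}$ enters). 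A last step replaces $\tilde f_{2R}$ by $\tilde f_R$. Thus the paper converts $x$-oscillation into $\nabla_v f$ via velocity averaging rather than via explicit kinetic paths.
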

\begin{remark}
This lemma corresponds to \cite[Lemmas~2.1 \& 2.2]{gs}. 
\end{remark}
\begin{proof}
For the sake of clarity, we put $z_0=0$ and $R=1$.  
  Consider $\tau_{2} \in C^\infty (\R,\R)$ such that $0 \le \tau_{2} \le 1$,
  $\tau_{2} \equiv 0$ in $(-\infty,-2^2]$ and $\tau_{2} \equiv 1$ in
  $[-1,0]$.  Use $2(f-\tilde{f}_{2}) \chi_{2}  \tau_{2} $ as a test
  function for \eqref{eq:main} and get 
\begin{multline*}
\int_{\R^{2d}}  (f(0)-\tilde{f}_{2}(0))^2 \chi_{2} \dd x \dd v + 2 \int_{\R^{2d+1}} (A \nabla_v
f \cdot \nabla_v f) \chi_{2} \tau_{2} \dd x \dd v \dd t \\
= \int_{\R^{2d+1}}(f-\tilde{f}_{2})^2 \chi_{2} (\partial_t \tau_{2} ) \dd x \dd v \dd t
-   \int_{\R^{2d+1}} v \cdot \nabla_x \left[ (f - \tilde{f}_{2} )^2 \right]
\chi_{2} \tau_{2} \dd x \dd v \dd t \\
- 2 \int_{\R^{2d+1}} (f-\tilde{f}_{2}) A \nabla_v f \cdot \nabla_v \chi_{2} \tau_{2} \dd x \dd v \dd t.
\end{multline*}
Remark that the definition of $\tilde{f}_{2}$ implies that the remaining
term \[- 2 \int_{\R^{2d+1}} (\partial_t \tilde{f}_{2}) (f-\tilde{f}_{2}) \chi_{2} \tau_{2}\]
vanishes. This equality yields 
\begin{align*}
\int_{\R^{2d}}  (f(0)-\tilde{f}_{2}(0))^2 \chi_{2} \dd x \dd v + \lambda \int_{\R^{2d+1}}
  |\nabla_v f|^2 \chi_{2} \tau_{2} \dd x \dd v \dd t \\
\le \int_{\R^{2d+1}} (f-\tilde{f}_{2})^2 \left(\chi_{2} |\partial_t \tau_{2} | 
+ |v \cdot \nabla_x \chi_{2}| \tau_{2}  + \frac{\Lambda^2}{\lambda} |
  \nabla_v \sqrt{\chi_{2}} |^2 \tau_{2} \right) \dd x \dd v \dd t
\end{align*}
which yields \eqref{eq:caccio-mean}. Changing the final time, we also get
\[
\sup_{t \in (-1,0]} \int_{\R^{2d}}  \big[ f(t)-\tilde{f}_{2}(t) \big]^2 \chi_{2} (t) \dd x
\dd v 
\le C   \int_{\cQ_{2}} |f - \tilde{f}_{2}|^2 \dd x \dd v \dd t.
\]
Now the function $F= f -\tilde{f}_{2}$ is such that $\int F(x,v,t) \dd
x \dd v =0$. 
In particular, we have 
\[ 
\int_{\cQ_{2}} (f-\tilde{f}_{2})^2 \dd x \dd v \dd t \le C \int_{\cQ_{2}} 
\left(|\nabla_v f|^2 +  |D^{\frac13}_x f|^2\right) \dd x \dd v \dd t. 
\] 

Observe that if there are no lower order terms ($B=0$ and $s=0$), then
we have for all $q \in (1,2]$,
\begin{equation}
\label{eq:gain-x-sol-q}
 \int_{\cQ_{2}}  |D^{\frac13}_x f|^q \dd x \dd v \dd t \le C \int_{\cQ_{3}} 
 |\nabla_v f|^q  \dd x \dd v \dd t.
\end{equation}
Indeed, in view of the proof of
\eqref{eq:gain-diff}, it is enough to apply
\cite[Theorem~1.3]{bouchut} with such a $q$ and use the 
Poincar\'e inequality (assuming the cutoff functions to have convex
super-level sets). 

Combining the three previous estimates yields 
\[ 
\sup_{t \in ( -1,0]} \int_{Q^t_1}  (f(t)-\tilde{f}_{2}(t))^2 \chi_{2}(t) \dd x
\dd v 
\le C  \int_{\cQ_{3}}  |\nabla_v f|^2  \dd x \dd v \dd t. 
\]
Finally, we write for $t \in (-1,0]$ 
\begin{align*}
\frac12 \int_{\cQ_1^t}  (f(t)-\tilde{f}_{1}(t))^2 \chi_{2}(t) \le &  \int_{\cQ_1^t}  (f(t)-\tilde{f}_{2}(t))^2 \chi_{2}(t)
 + \int_{\cQ_1^t}  (\tilde{f}_{2}(t)-\tilde{f}_{R}(t))^2 \chi_{2}(t)\\
\le &  \int_{\cQ_1^t}  (f(t)-\tilde{f}_{2}(t))^2 \chi_{2}(t) + |\cQ_1^t| \left(
      \frac1c \int_{\cQ_1^t} (f- \tilde f_{2} (t)) \chi_{1} (x,v,t) \dd x \dd v \right)^2 \\
 \le & \ C \int_{\cQ_1^t}  (f(t)-\tilde{f}_{2}(t))^2 \chi_{2}(t) 
\end{align*}
and we get the second desired estimate since $\chi_{2} \equiv 1$ in $\cQ_1$. 
\end{proof}
We now turn to the proof of Theorem~\ref{thm:l2+eps}. The use of
\eqref{eq:gain-x-sol-q} is the main difference with \cite{gs}.
\begin{proof}[Proof of Theorem~\ref{thm:l2+eps}]
  Pick $p>2$ and let $q$ denote its conjugate exponent: $\frac1q +
  \frac1{p} =1$. We follow \cite{gs} in writing (omitting the center of cylinders $z_0$),
thanks to \eqref{eq:caccio-mean},  
\begin{align*}
  \fint_{\cQ_{1}} |\nabla_v f |^2    & \lesssim \int_{\cQ_2} |f - \tilde{f}_2|^2 \,   \\
& \le \sup_{t \in (t_0-4,t_0]} \left(\int_{Q^t_2} |f - \tilde{f}_2|^2 \right)^{\frac12}  
\int_{t_0-4}^{t_0} dt \left( \int_{Q^t_2} |f - \tilde{f}_2|^2\right)^{\frac12} \\
& \lesssim  \left( \int_{\cQ_{4}} |\nabla_v f|^2 \,  \right)^{\frac12} 
 \int_{t_0-4}^{t_0} dt  \left( \int_{Q^t_2} |f - \tilde{f}_2|^q\right)^{\frac1{2q}} 
\left( \int_{Q^t_2} |f - \tilde{f}_2|^{p}\right)^{\frac1{2p}}
\end{align*}
where   \eqref{eq:poincare} and H\"older inequality are used successively. 

We now use Sobolev inequalities and H\"older inequality (twice)
successively to get
\begin{align*}
  \fint_{\cQ_{1}} |\nabla_v f |^2   \,  \lesssim & 
 \left( \int_{\cQ_{4 }} |\nabla_v f|^2 \,  \right)^{\frac12} 
 \times  \left[ \int_{t_0-4}^{t_0}  \left( \int_{Q^t_2} |\nabla_v f|^q +  |D_x^{1/3} f|^q \right)^{\frac1{2q}}
  \dd t \right] \\
& \hspace{3cm}\times \left( \int_{Q^t_2}   |\nabla_v f |^2 +  |D_x^{1/3} f|^2 \right)^{\frac1{4}} \\
 \lesssim &  \left( \int_{\cQ_{4 }} |\nabla_v f|^2 \,  \right)^{\frac12} \left( \int_{\cQ_2}  |\nabla_v f|^q +  |D_x^{1/3} f|^q \right)^{\frac1{2q}}  \\
& \hspace{2cm} \times  \left( \int_{t_0-4}^{t_0} \left(\int_{Q^t_2}   |\nabla_v f
  |^2 +  |D_x^{1/3} f|^2 \right)^{\frac{q}{2(2q-1)}} \dd t \right)^{\frac{2q-1}{2q}} \\
 \lesssim &  \left( \int_{\cQ_{4}} |\nabla_v f|^2 \,  \right)^{\frac12} 
 \times \left( \int_{\cQ_2}  |\nabla_v f|^q +  |D_x^{1/3} f|^q \right)^{\frac1{2q}}  \times  \left( \int_{\cQ_2}   |\nabla_v f |^2 +  |D_x^{1/3} f|^2  \right)^{\frac14} .
\end{align*}
We now use \eqref{eq:gain-x-sol-q} and get 
\begin{align*}
  \fint_{\cQ_{1}} |\nabla_v f |^2    \,  &\lesssim 
\left( \int_{\cQ_{4}} |\nabla_v f|^2 \,  \right)^{\frac12} 
 \left( \int_{\cQ_2}  |\nabla_v f|^q \right)^{\frac1{2q}} 
 \left( \int_{\cQ_2}   |\nabla_v f |^2 \right)^{\frac14}  \\
& \lesssim  \left( \int_{\cQ_{4}} |\nabla_v f|^2 \,  \right)^{\frac34}  \left( \int_{\cQ_2}  |\nabla_v f|^q \right)^{\frac1{2q}}.
\end{align*}
Now use and get for all $\eps>0$,
\begin{align*}
  \fint_{\cQ_{1}} |\nabla_v f |^2   & \lesssim  
  \left( \fint_{\cQ_{4}} |\nabla_v f|^2 \, 
  \right)^{\frac34} \left( \fint_{\cQ_{4}} |\nabla_v f|^q 
  \right)^{\frac1{2q}} \\
& \lesssim   \left( \fint_{\cQ_{4}} |\nabla_v f|^2  
  \right)^{\frac34} \left( \fint_{\cQ_{4}} |\nabla_v f|^q 
  \right)^{\frac1{2q}} .
\end{align*}
After rescaling, we get the following
\begin{align*}
  \fint_{\cQ_{R}} |\nabla_v f |^2  &\lesssim   \left( \fint_{\cQ_{4R}} |\nabla_v f|^2  
  \right)^{\frac34} \left( \fint_{\cQ_{4R}} |\nabla_v f|^q 
  \right)^{\frac1{2q}} \\
& \lesssim \eps \fint_{\cQ_{4R}} |\nabla_v f|^2 + c_{\eps}  \left( \fint_{\cQ_{4R}} |\nabla_v f|^q 
  \right)^{\frac2{q}}.
\end{align*}
Apply now Proposition~\ref{lem:gehring} in order to achieve the proof
of Theorem~\ref{thm:l2+eps}.
\end{proof}

\appendix

\section{Known estimates for the Landau equation}
\label{sec:landau}

\begin{lemma}[Lower bound - \cite{dv,luislandau}]\label{lem:upperbound}
  Assume there exist positive constants $M_1,M_0,E_0$ and $H_0$ such
  that \eqref{e:meh} holds true. Then
\[ \det A [f] \ge c (1+|v|)^\kappa \]
with 
\[ 
\kappa =
\begin{cases} 
(d-1)(\gamma+2) + \gamma & \text{ if } \gamma \in [-2,0] \\
3 \gamma +2 & \text{ if } \gamma \in [-d,-2)
\end{cases}
\]
where $c$ only depends on dimension, $\gamma$, $M_0$, $M_1$, $E_0$ and $H_0$.
\end{lemma}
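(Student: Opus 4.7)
The plan is to estimate each eigenvalue of $A[f](v)$ separately, distinguishing the direction parallel to $v$ from the $(d-1)$ transverse directions. After the change of variables $v' = v-w$, one has, for any unit vector $e$,
\begin{equation*}
e \cdot A[f](v) e = a_{d,\gamma} \int_{\R^d} |v-v'|^\gamma \bigl| \Pi_{e^\perp} (v-v') \bigr|^2 f(v') \dd v',
\end{equation*}
where $\Pi_{e^\perp}$ is the orthogonal projection onto $e^\perp$. The three bounds in \eqref{e:meh} will be used as follows: the lower bound on $M$ together with the upper bounds on $M,E,H$ produce a ball $B_R \subset \R^d$ of radius $R = R(M_0, M_1, E_0, H_0)$ on which $f$ carries mass at least some $c_\ast > 0$ and on which $f$ cannot concentrate in a thin neighborhood of any affine hyperplane (the latter via Jensen applied to $f$ on the slab and the bound $H \le H_0$).

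For $\gamma \in [-2,0]$ and $|v| \gg R$, first take $e = \xi$ with $\xi \perp v$: since $v \in \xi^\perp$, one has $|\Pi_{\xi^\perp}(v-v')| \ge |v| - |v'| \gtrsim |v|$ on $v' \in B_R$, and combined with $|v-v'| \asymp |v|$ this yields $\xi \cdot A[f](v) \xi \gtrsim |v|^{\gamma+2}$, giving $d-1$ transverse eigenvalues of size $(1+|v|)^{\gamma+2}$. Next take $e = v/|v|$: then $\Pi_{e^\perp}(v-v') = -v'_\perp$, the component of $v'$ orthogonal to $v$, so
\begin{equation*}
\tfrac{v}{|v|} \cdot A[f](v) \tfrac{v}{|v|} \gtrsim |v|^\gamma \int_{B_R} |v'_\perp|^2 f(v') \dd v'.
\end{equation*}
The non-concentration lemma applied to thin slabs around the line $\R v$ gives $\int_{B_R} |v'_\perp|^2 f \gtrsim 1$, hence a parallel eigenvalue of size $(1+|v|)^\gamma$; multiplying eigenvalues produces $\det A[f](v) \gtrsim (1+|v|)^{(d-1)(\gamma+2)+\gamma}$.

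For the very soft range $\gamma \in [-d,-2)$ the weight $|v-v'|^{\gamma+2}$ remains locally integrable but becomes singular at $v' = v$, and a purely long-range argument on $B_R$ alone no longer yields the parallel eigenvalue sharply. Following the strategy of \cite{luislandau}, I would split the integration into $\{|v-v'| \ge |v|/2\}$ (treated as above to recover the transverse bound $(1+|v|)^{\gamma+2}$) and $\{|v-v'| < |v|/2\}$, and use the entropy bound in a quantitative dispersion estimate to extract the parallel eigenvalue, optimizing over the cutoff radius. This optimization produces an extra loss compared to the $\gamma \ge -2$ case and yields the exponent $3\gamma+2$. In both regimes the main obstacle is the quantitative non-concentration/dispersion step built from the entropy bound, which is the technical heart of \cite{dv,luislandau}; the rest is a direct combination of the transverse and parallel eigenvalue estimates, and for $|v|$ bounded the same dispersion argument with $|v-v'|$ of order one yields uniform ellipticity, matching $(1+|v|)^\kappa$ at bounded velocities.
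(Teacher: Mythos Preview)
The paper does not prove this lemma; it is stated in the appendix as a known result with citations to \cite{dv,luislandau} and no argument is supplied. Your sketch is essentially the approach of those references: write the quadratic form $e\cdot A[f](v)\,e$ as a weighted integral of $|\Pi_{e^\perp}(v-v')|^2$, use the hydrodynamic bounds together with the entropy bound to prove that $f$ cannot concentrate near any affine hyperplane (the Desvillettes--Villani non-concentration step), and then separate the eigenvalue in the direction $v/|v|$ from the $d-1$ transverse eigenvalues. For $\gamma\in[-2,0]$ your outline is correct and essentially complete.

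For $\gamma\in[-d,-2)$ your account is more of a pointer than an argument: you assert that an ``optimization over the cutoff radius'' yields the exponent $3\gamma+2$ but do not show what quantity is being optimized or how this particular exponent emerges from the near/far decomposition you describe. If this is meant to stand as a proof rather than a citation, you should carry out that step explicitly, since it is the only place where the very soft case differs from the moderate one and the claimed exponent is not transparent from what you have written. As it stands, the $\gamma\in[-2,0]$ half is a proof sketch while the $\gamma<-2$ half is essentially a reference to \cite{luislandau}.
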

\begin{lemma}[Upper bounds - \cite{dv,luislandau}]\label{lem:lowerbound}
 Assume there exist positive constants $M_1,M_0,E_0$ and $H_0$ such
  that \eqref{e:meh} holds true. 
Assume that $f \in  L^\infty (\R^d)$. Then
\begin{align*}
 |A[f]| &\le \begin{cases} 
C (1+|v|)^{\gamma+2} & \text{ if } \gamma \in [-2,0]  \\
C \|f\|_{\infty}^{\frac{|\gamma+2|}d} & \text{ if } \gamma \in [-d,-2)   
\end{cases}
\\
|B[f]| &\le  \begin{cases} 
C (1+|v|)^{\gamma+1} & \text{ if } \gamma \in [-1,0]  \\
C \|f\|_{\infty}^{\frac{|\gamma+1|}d} & \text{ if } \gamma \in [-d,-1)   
\end{cases} \\
|c[f]| & \le 
\begin{cases}
C & \text{ if } \gamma =0 \\
C \|f\|_{\infty}^{\frac{|\gamma|}d} & \text{ if } \gamma \in [-d,0).   
\end{cases}
\end{align*}
where $C$ only depends on dimension, $\gamma$, $M_0$, $E_0$.
\end{lemma}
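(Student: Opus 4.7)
The three quantities $A[f]$, $B[f]$, and $c[f]$ are all controlled by convolutions of $f$ against a singular power kernel:
\[
I_\alpha(v) := \int_{\R^d} |w|^\alpha f(v-w) \, dw,
\]
with $\alpha = \gamma+2$ for $A$ (since $|I - \hat w \otimes \hat w| \le 1$), $\alpha = \gamma+1$ for $B$, and $\alpha = \gamma$ for $c$. The whole lemma thus reduces to estimating $I_\alpha(v)$ in two regimes: the regular regime $\alpha \in [0,2]$ (handled by moments of $f$) and the singular regime $\alpha \in [-d,0)$ (handled by an $L^1$--$L^\infty$ interpolation).

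\textbf{Regular regime $\alpha \ge 0$.} I would use the triangle inequality $|w| \le |v| + |v-w|$ to split $|w|^\alpha \lesssim |v|^\alpha + |v-w|^\alpha$. The first term contributes $|v|^\alpha M(x,t) \le M_0 |v|^\alpha$. For the second, the change of variables $u = v-w$ gives $\int |u|^\alpha f(u) \, du$; since $\alpha \in [0,2]$, the inequality $|u|^\alpha \le 1+|u|^2$ yields $\int |u|^\alpha f \, du \le M_0 + 2E_0$. Combining the two pieces produces $I_\alpha(v) \lesssim (1+|v|)^\alpha$ with constant depending only on $d$, $M_0$, $E_0$. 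This covers $A$ for $\gamma \in [-2,0]$, $B$ for $\gamma \in [-1,0]$, and $c$ for $\gamma = 0$.

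\textbf{Singular regime $-d < \alpha < 0$.} Here $|w|^\alpha$ is locally integrable but blows up at the origin, so I would split $I_\alpha = I_{\le R} + I_{>R}$. For the near-singular piece use $\|f\|_\infty$ and the local integrability of $|w|^\alpha$:
\[
I_{\le R}(v) \le \|f\|_\infty \int_{|w|\le R} |w|^\alpha \, dw = c_{d,\alpha}\, R^{\alpha+d} \|f\|_\infty.
\]
For the tail use the crude bound $|w|^\alpha \le R^\alpha$ together with $\|f\|_{L^1} \le M_0$:
\[
I_{>R}(v) \le R^\alpha M_0.
\]
Optimizing by choosing $R^d = M_0/\|f\|_\infty$ balances the two contributions and produces
\[
I_\alpha(v) \lesssim M_0^{1-|\alpha|/d}\,\|f\|_\infty^{|\alpha|/d},
\]
with an implicit constant depending only on $d$, $\gamma$, $M_0$. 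This handles $A$ for $\gamma \in [-d,-2)$, $B$ for $\gamma \in [-d,-1)$, and $c$ for $\gamma \in (-d,0)$. The Coulomb boundary $\gamma = -d$ (only relevant for $c$) is immediate from the delta-type definition $c[f] = c_{d,-d} f$, which gives $|c[f]| \le c_{d,-d}\|f\|_\infty = c\,\|f\|_\infty^{|\gamma|/d}$.

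\textbf{Main obstacle.} The argument is essentially a bookkeeping exercise, and the one place to be careful is the tight splitting/optimization in the singular regime: one must verify that $-d < \alpha = \gamma + k < 0$ throughout the relevant subranges (i.e.\ that $\alpha > -d$ is not saturated except at the Coulomb Dirac-kernel case), and that the optimization in $R$ yields a constant depending only on $M_0$ and not on the unknown $\|f\|_\infty$. Note that the entropy bound $H_0$ is not needed for these upper bounds (it only enters Lemma~\ref{lem:upperbound} via a Dunford–Pettis-type non-concentration estimate used to lower-bound $\det A[f]$); mass and $L^\infty$ bounds together suffice here.
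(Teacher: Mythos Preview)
The paper does not actually prove this lemma: it is stated in the appendix as a known result and simply attributed to \cite{dv,luislandau}, with no argument given. Your proof is correct and is exactly the standard derivation one finds in those references: reduce each of $A,B,c$ to the convolution $I_\alpha = |\cdot|^\alpha * f$, use the mass/energy bounds when $\alpha\in[0,2]$, and the near/far split with $L^\infty$--$L^1$ optimization when $\alpha\in(-d,0)$, with the Coulomb endpoint for $c$ handled by its delta-kernel definition. Your remark that the entropy bound $H_0$ plays no role here (it enters only in the lower bound on $\det A[f]$) is also accurate.
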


\section{Proof of a technical lemma}
\label{sec:technical}

\begin{proof}[Proof of Lemma~\ref{lem:tech}]
To justify a) and b), we remark that 
\[ 
\mathcal{P}^- \subset \bigcup_{k=1}^{+\infty}  Q^k \subset \mathcal{P}^+
\] 
where
\begin{align*}
\mathcal{P}^- & := \{ (y,w,s): s \ge \frac43 \left(\frac{4^2}{\omega^2} \rho^2 -1\right), 
|y|\le \rho^3, |w|\le
\rho \}, \\
\mathcal{P}^+ & := \{ (y,w,s): s \ge \frac43 \left(\frac{4}{\omega^2} \rho^2 -1\right), |y|\le \rho^3, |w|\le
\rho \}, 
\end{align*}
see Figure~\ref{fig:paraboloids}.
\begin{figure}[ht]
\includegraphics[height=5cm]{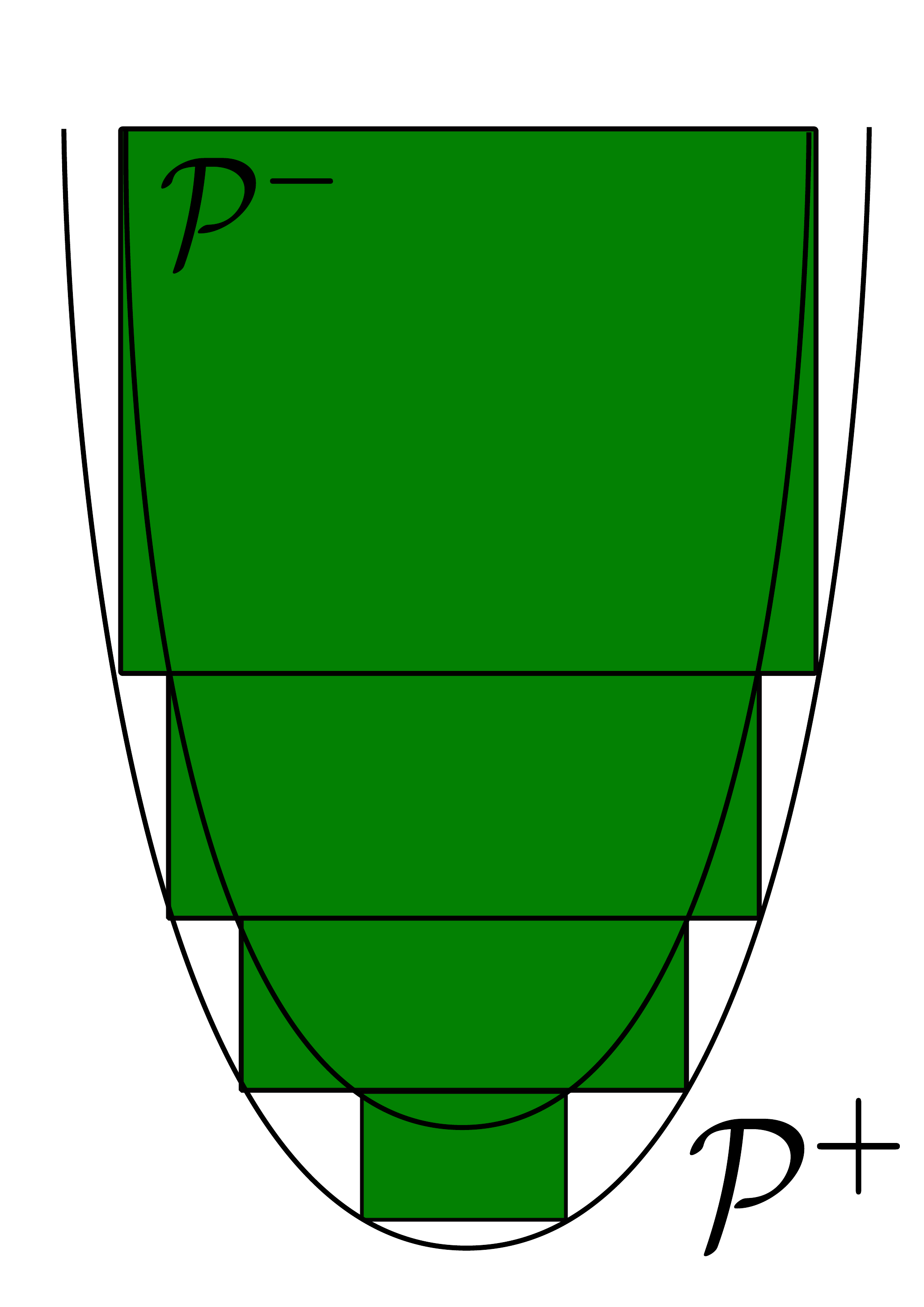}
\caption{Paraboloids containing/contained in the union of iterated cylinders.}
\label{fig:paraboloids}
\end{figure}

In what follows, $R$ and $r_0$ are chosen as functions of $\Delta$. 
In particular, 
\[ R \le \sqrt{\Delta} \quad \text{ and } \quad r_0 \le \sqrt{\Delta}. \] 

As far as a) is concerned, we should ensure that for all $z \in Q^-$ and $r \in (0,r_0)$,
\[ (z \circ r \mathcal{P}^+) \cap \{ t \le 0 \} \subset Q_1(0) .\]
If $z=(x^-,v^-,t^-)$ and $z^+=(x^+,v^+, t^+) \in r\mathcal{P}^+$ are such that 
\( z \circ z^+ \in \{ t \le 0 \}\), we have 
\begin{align*}
0 & \ge t^- +  t^+ \\
& \ge (-\Delta -R^2) +  \frac43((4/\omega^2) \rho^2 -r^2) \\
& \ge - 4 \Delta +  (4^2/3\omega^2) \rho^2
\end{align*}
where $\rho = |v^+|$. 
This implies in particular
\[  \rho^2 \le \frac{3\omega^2}4  \Delta .\]
In particular, for $\Delta \in (0,1)$, 
\begin{align*}
 |v^- + v^+ | & \le R + \rho \\
&\le  (1 + \sqrt{3}\omega/2) \sqrt{\Delta}\\
|x^- + x^+ + t^+ v^- | & \le R^3 + \rho^3 + R \\
& \le (1 + (\sqrt{3}\omega/2)^3 ) \Delta^{3/2} + \sqrt{\Delta} \\
& \le (2 + (\sqrt{3} \omega/2)^3 ) \sqrt{\Delta}.
\end{align*}
We thus can choose $\Delta$ small enough (recall $\omega=1/\sqrt{5}$) to ensure a).  

As far as b) is concerned, 
notice that for $z^+ \in Q^+$ and $z \in Q^-$, we have 
\[ z^{-1} \circ z^+ = (t^+-t,x^+-x-(t^+-t)v,v^+-v) .\]
Choosing $R^2 \le \Delta \le \frac12$ we have $2R \le (4R)^{\frac13}$ and we get
\begin{align*}
&  |v^+-v| \le 2R  \le (4 R)^{\frac13}\\
& |x^+-x - (t^+-t) v | \le 
2R^3 + (\Delta + R^2) R = 3R^3 + \Delta R \le 4 R
\end{align*}
(since $R\le 1$ and $\Delta \le 1$) and 
\[ t^+ -t \ge \Delta -R^2. \]
In particular $z^{-1} \circ z^+ \in r\mathcal{P}^-$ if 
\[
\Delta  - R^2  \ge \frac43 \left( \frac{4^2}{\omega^2} (4R)^{\frac13} - r^2 \right). 
\]
It is enough satisfy
\[
 \Delta \ge R^2 +\frac{4^3}{3\omega^2} (4R)^{\frac13}. 
\]
Hence, for $\Delta$ given, we can choose $R = R(\Delta)$ small
enough to get the desired inequality and in turn point b). 
\end{proof}

\bibliographystyle{plain}
\bibliography{ndgh}

\signfg

\signci 

\signcm 

\signav

\end{document}